\newtheorem{theorem}{Theorem}[section]
\newtheorem{lemma}[theorem]{Lemma}
\newtheorem{proposition}[theorem]{Proposition}
\newtheorem{corollary}[theorem]{Corollary}
\theoremstyle{definition}}
\theoremstyle{definition}\newtheorem{definition}[theorem]{Definition}}
\theoremstyle{definition}\newtheorem{remark}[theorem]{Remark}}
\numberwithin{equation}{section}
\def\C{{\mathbb C}}
\def\N{{\mathbb N}}
\def\Z{{\mathbb Z}}
\def\R{{\mathbb R}}
\def\K{{\mathbb K}}
\def\epsilon{\varepsilon}
\def\kappa{\varkappa}
\def\phi{\varphi}
\def\leq{\leqslant}
\def\geq{\geqslant}
\def\dim{{\rm dim}\,}
\def\ker{\hbox{\tt ker}\,}
\def\im{\hbox{\tt im}\,}
\def\deg{\hbox{\tt deg}\,}
\def\lll{\langle}
\def\rrr{\rangle}
\def\dd{\delta}
\title{Three dimensional Sklyanin algebras and Gr\"obner bases}
\author{Natalia Iyudu, Stanislav Shkarin}
\date{}
\begin{document}

\maketitle

\begin{abstract}We consider a Sklyanin algebra $S$ with 3 generators, which is the quadratic algebra over a field $\K$ with $3$ generators $x,y,z$ given by $3$ relations $pxy+qyx+rzz=0$, $pyz+qzy+rxx=0$ and $pzx+qxz+ryy=0$, where $p,q,r\in\K$. This class of algebras enjoyed much of attention, in particular, using tools from algebraic geometry, Feigin, Odesskii \cite{odf}, and Artin, Tate and Van den Bergh \cite{ATV2}, showed that if at least two of the parameters $p$, $q$ and $r$  are non-zero and at least two of three numbers $p^3$, $q^3$ and $r^3$ are distinct, then $S$
is Koszul and has the same Hilbert series as the algebra of commutative polynomials in 3 variables. 

It became commonly accepted, that it is impossible to achieve the same objective by purely algebraic and combinatorial means, like the Gr\"obner basis technique. The main purpose of this paper is to trace the combinatorial meaning of the properties of Sklyanin algebras, such as Koszulity, PBW, PHS, Calabi-Yau, and  to give a new constructive proof of the above facts due to Artin, Tate and Van den Bergh.

Further, we study a wider class of Sklyanin algebras, namely the situation when all parameters of relations could be different. We call them  generalized Sklyanin algebras. We classify up to isomorphism all genralized Sklyanin algebras with the same Hilbert series as commutative polynomials on 3 variables. We show that generalized Slyanin algebras in general position have a Golod-Shafarevich Hilbert series (with exception of the case of field with two elements).
\end{abstract}

\small \noindent{\bf MSC:} \ \ 17A45, 16A22

\noindent{\bf Keywords:} \ \ Quadratic algebras, Koszul algebras, Calabi-Yau algebras, Hilbert series, Gr\"obner bases, PBW-algebras, PHS-algebras, generalized Sklyanin algebras  \normalsize

\bigskip

\tableofcontents

\medskip

\vskip2truecm

\section{Introduction \label{s1}}\rm

It is well-known that algebras arising in string theory, from the geometry of Calabi-Yau manifolds, that is,  various versions of Calabi-Yau algebras, enjoy the potentiality-like properties. This in essence comes from the symplectic structure on the manifold. The notion of {\it noncommutative potential} was first introduced by Kontsevich in \cite{Ko}. Let $F=\C \lll x_1,\dots,x_n \rrr$, then the quotient vector space

$F_{cyc}=F/[F,F]$ has a simple basis labeled by cyclic words in the alphabet $ x_1,\dots,x_n$. For each $j=1,\dots,n$ in \cite{Ko} was introduced a linear map $\frac{\dd}{\dd x_j}: F_{cyc} \to F$ defined by its action on monomials $\Phi=x_{i_1}\dots x_{i_n}$ by
$$
\frac{\dd \Phi}{\dd x_j}= \sum_{s \vert i_s=j} x_{i_s+1}x_{i_s+2}\dots x_{i_r}x_{i_1}x_{i_2}\dots x_{i_s-1}
$$
So, for any element $\Phi \in F_{cyc}$, which is called a potential, one can define a collection of elements $\frac{\dd \Phi}{\dd x_i}$ for $1\leq i\leq n$. An algebra which has a presentation:
$$
{\cal U} = \C \lll x_1,\dots,x_n \rrr /  \left\{\frac{\dd \Phi}{\dd x_i} \right\}_{1\leq i\leq n}
$$
for some $\Phi \in F_{cyc}$ is called a {\it potential algebra}. This can be generalised to
superpotential algebras, or further generalised to algebras defined by multilinear forms, as in \cite{DV1,DV2}.

It is known for $3$-dimensional Calabi-Yau algebras that they are always derived from a superpotential. But
not all superpotential algebras are Calabi-Yau. This question was studied in details in \cite{DV1,DV2}, \cite{BW} (see also references therein), in \cite{Sol} the conditions on potential which ensure CY have been studied. The most general counterpart of potentiality  and its relation to CY (in one of possible definitions) is considered in \cite[Theorem~3.6.4]{G}.

The simplest example of potential algebras are commutative polynomials. Another important example, which have been extensively studied \cite{AS,skl, ode, odf, ATV1, ATV2, W,S1,S2} are Sklyanin algebras. We are aiming here to demonstrate, that such properties of these algebras as  PBW, PHS, Kosulity, Calabi-Yau could be obtained by constructive, purely combinatorial and algebraic methods, avoiding the power of algebraic geometry demonstrated in \cite{ATV1, ATV2} and later papers continuing this line.

Throughout this paper $\K$ is an arbitrary field, $B$ is a graded algebra, and $B_m$ stands for the $m^{\rm th}$ graded component of algebra $B$. If $V$ is an $n$-dimensional vector space over $\K$, then $F=F(V)$ is the tensor algebra of $V$. For any choice of a basis $x_1,\dots,x_n$ in $V$, $F$ is naturally identified with the free $\K$-algebra with the generators $x_1,\dots,x_n$. For subsets $P_1,\dots,P_k$ of an algebra $B$, $P_1\dots P_k$ stands for the linear span of all products $p_1\dots p_k$ with $p_j\in P_j$. We consider a degree grading on the free algebra $F$: the $m^{\rm th}$ graded component of $F$ is $V^m$. If $R$ is a subspace of the $n^2$-dimensional space $V \otimes V$, then the quotient of $F$ by the ideal $I$ generated by $R$ is called a {\it quadratic algebra} and denoted $A(V,R)$. For any choice of bases $x_1,\dots,x_n$ in $V$ and $g_1,\dots,g_k$ in $R$, $A(V,R)$ is the algebra given by generators $x_1,\dots,x_n$ and the relations $g_1,\dots,g_k$ ($g_j$ are linear combinations of monomials $x_ix_j$ for $1\leq i,j\leq n$). Since each quadratic algebra $A$ is degree graded, we can consider its Hilbert series
$$
H_A(t)=\sum_{j=0}^\infty {\rm dim}_{\K} A_j\,\,t^j.
$$

Quadratic algebras whose Hilbert series is the same as for the algebra $\K[x_1,\dots,x_n]$ of commutative polynomials play a particularly important role in physics. We say that $A$ is a {\it PHS} (for 'polynomial Hilbert series') if
$$
H_A(t)=H_{\K[x_1,\dots,x_n]}(t)=(1-t)^{-n}.
$$

Following the notation from the Polishchuk and Positselski book \cite{popo}, we say that a quadratic algebra $A=A(V,R)$ is a {\it PBW-algebra} (Poincare, Birkhoff, Witt) if there are bases $x_1,\dots,x_n$ and $g_1,\dots,g_m$ in $V$ and $R$ respectively such that with respect to some compatible with multiplication well-ordering on the monomials in $x_1,\dots,x_n$, $g_1,\dots,g_m$ is a (non-commutative) Gr\"obner basis of the ideal $I_A$ generated by $R$. In this case, $x_1,\dots,x_n$ is called a {\it PBW-basis} of $A$, while $g_1,\dots,g_m$ are called the {\it PBW-generators} of $I_A$.

In order to avoid confusion, we would like to stress from the start that Odesskii \cite{ode} as well as some other authors use the term PBW-algebra for what we have already dubbed PHS. Since we deal with both concepts, we could not possibly call them the same and we opted to follow the notation from \cite{popo}.

Another concept playing an important role in this paper is Koszulity. For a quadratic algebra $A=A(V,R)$, the augmentation map $A\to \K$ equips $\K$ with the structure of a commutative graded $A$-bimodule. The algebra $A$ is called {\it Koszul} if $\K$ as a graded right $A$-module has a free resolution $\dots\to M_m\to\dots\to M_1\to A\to\K\to 0$ with the second last arrow being the augmentation map and with each $M_m$ generated in degree $m$. The last property is the same as the condition that the matrices of the above maps $M_m\to M_{m-1}$ with respect to some free bases consist of elements of $V$ (=are homogeneous of degree $1$).

If $(p,q,r)\in \K^3$, the {\it Sklyanin algebra} $Q^{p,q,r}$ is the quadratic algebra over $\K$ with generators $x,y,z$ given by $3$ relations
\begin{equation*}
\text{$pyz+qzy+rxx=0$},\quad \text{$pzx+qxz+ryy=0$},\quad\text{$pxy+qyx+rzz=0$}.
\end{equation*}
Note that if $p\neq 0$, then $Q^{p,q,r}$ is obviously the same as the algebra
$S^{a,s}$ with 3 generators is the quadratic algebra over $\K$ with generators $x,y,z$ given by $3$ relations
\begin{equation*}
\text{$yz-azy-sxx=0$},\quad \text{$zx-axz-syy=0$},\quad\text{$xy-ayx-szz=0$},
\end{equation*}
where $a=-\frac qp$, $s=-\frac rp$. This way, we reduce the number of parameters, and will deal with algebras $S^{a,s}$. 

Odesskii \cite{ode} proved that in the case $\K=\C$, a generic Sklyanin algebra is a PHS. That is,
\begin{equation*}
\textstyle H_{S^{a,s}}(t)=\sum\limits_{j=0}^\infty \frac{(j+2)(j+1)}{2}\,t^j\ \ \text{for generic $(a,s)\in\C^2$,}
\end{equation*}
where generic means outside the union of countably many  algebraic varieties in $\C^2$ (different from $\C^2$). In particular, the equality  above holds for almost all $(a,s)\in\C^2$ with respect to the 4-dimensional Lebesgue measure. Polishchuk and Positselski \cite{popo} showed in the same setting and with the same meaning of the word 'generic', that for generic $(a,s)\in\C^2$, the algebra $S$ is Koszul but is not a PBW-algebra.

For further references, we label these results:
\begin{equation}\label{opp}
\text{a generic Sklyanin algebra $S^{a,s}$ over $\C$ is  Koszul and PHS.}
\end{equation}

The same results are contained in the Artin, Shelter paper \cite{AS}.

Artin, Tate and Van den Bergh
\cite{ATV1,ATV2}, and Feigin, Odesskii \cite{odf}, considered certain family of infinite dimensional representations of Sklyanin algebra, namely representations, where variables are represented by matrices with one nonzero upper diagonal. In other words, they considered modules with one-dimensional graded components. This was very instructive, and core for most arguments.
They showed that if at least two of the parameters $p$, $q$ and $r$  are non-zero and the equality $p^3=q^3=r^3$ fails, then $Q^{p,q,r}$ is Artin--Shelter regular. More specifically, $Q^{p,q,r}$ is Koszul and has the same Hilbert series as the algebra of commutative polynomials in three variables.

It became commonly accepted that it is impossible to obtain the same results by purely algebraic and combinatorial means like the Gr\"obner basis technique, see, for instance, comments in \cite{ode,W}. The main purpose of this paper is to perform this very impossibility. Namely, we prove the same results by using only combinatorial algebraic techniques, but not algebraic geometry. Mainly, we use just (non-commutative) Gr\"obner basis approach.

\begin{theorem}\label{copo0} The algebra $Q^{p,q,r}$ is Koszul for any $(p,q,r)\in\K^3$. The algebra $Q^{p,q,r}$ is PHS
if and only if at least two of $p$, $q$ and $r$ are non-zero and the equality $p^3=q^3=r^3$ fails.
\end{theorem}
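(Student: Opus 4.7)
The plan is to reduce everything to explicit Gr\"obner basis calculations, treating the degenerate and generic parameter ranges separately. After passing to the opposite algebra if necessary (which turns $Q^{p,q,r}$ into $Q^{q,p,r}$) and rescaling, one may assume $p\neq 0$ whenever at least two of $p,q,r$ are non-zero, and then rewrite $Q^{p,q,r}=S^{a,s}$ with $a=-q/p$, $s=-r/p$; the exceptional equality $p^3=q^3=r^3$ then becomes an explicit polynomial condition on $(a,s)$. The analysis splits into: (i) at most one of $p,q,r$ is non-zero, giving a free or quadratic monomial algebra; (ii) the singular stratum where at least two of the parameters are non-zero and $p^3=q^3=r^3$; and (iii) the generic case.

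For the ``only if'' half of the PHS statement, cases (i) and (ii) are handled by finite Gr\"obner bases. In the monomial case $xy=yz=zx=0$, for instance, a direct count yields $\dim A_n=3\cdot 2^{n-1}$ for $n\geq 1$, exceeding $\binom{n+2}{2}$ from $n=3$ on. In stratum (ii), leading-term reductions of the three defining relations produce new leading monomials that change the Hilbert series, and a finite calculation rules out PHS in each sub-case. For the ``if'' half (case (iii)), the key input is a cubic element $u\in S^{a,s}_3$ — the classical Sklyanin central element, explicit in $a,s$ — which is both central and regular in $S^{a,s}$. Centrality amounts to a finite check in degree $4$. Regularity, together with a Gr\"obner basis calculation in the quotient $S^{a,s}/uS^{a,s}$ (expected to have Hilbert series $(1-t^3)(1-t)^{-3}$), forces $H_{S^{a,s}}(t)(1-t^3)=(1-t^3)(1-t)^{-3}$, hence PHS. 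The main obstacle is the bootstrap: regularity of $u$ a priori needs control of $\dim S^{a,s}_n$, so one has to independently establish the upper bound $\dim S^{a,s}_n\leq\binom{n+2}{2}$ first, which should come from an Euler-characteristic inequality for a $3$-generator $3$-relation quadratic algebra.

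Koszulity, claimed for \emph{every} $(p,q,r)$, is obtained by checking a combinatorial criterion such as Backelin's distributive-lattice condition on the quadratic datum $(V,R)$, or equivalently by exhibiting a minimal free resolution of $\K$ of the right generation degrees. In the generic case (iii), once $H_A(t)=(1-t)^{-3}$ is known, one computes $H_{A^!}(t)=(1+t)^3$ via a small finite Gr\"obner basis on the Koszul-dual presentation ($3$ generators, $6$ relations), and the identity $H_A(t)\cdot H_{A^!}(-t)=1$ together with the exactness of the Koszul complex in low degree propagates to all degrees. In the degenerate cases (i) and (ii), Koszulity either is automatic (quadratic monomial algebras are Koszul, being tautologically PBW) or is proved ad hoc by writing down the Koszul resolution in the few remaining sub-cases; this part is comparatively routine and serves mainly to confirm that the Koszul conclusion is insensitive to the PHS boundary.
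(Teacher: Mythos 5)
Your overall architecture (case split, explicit central cubic element, quotient algebra, Hilbert series first and Koszulity second) matches the paper's, but two of your load-bearing steps do not work as described. First, the bootstrap inequality. What the central-element argument actually yields is the \emph{upper} bound: if $\dim(A/gA)_n\leq 3n$ then $\dim A_n\leq\dim A_{n-3}+3n$, whence $\dim A_n\leq\binom{n+2}{2}$, with equality exactly when $g$ is regular. What must be supplied independently is therefore the \emph{lower} bound $\dim A_n\geq\binom{n+2}{2}$, and no Euler-characteristic (Golod--Shafarevich) inequality can give it: for a quadratic algebra on $3$ generators with $3$ relations that inequality reads $\dim A_n\geq 3\dim A_{n-1}-3\dim A_{n-2}$, which iterates from $1,3,6$ to $1,3,6,9,9,0,\dots$ and is vacuous from degree $5$ on (the paper's Theorem~\ref{copo22} exhibits generalized Sklyanin algebras realizing exactly this truncated series). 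The paper's Lemma~\ref{loest} obtains the lower bound by an entirely different mechanism: Drinfeld's theorem on Zariski-openness of distributivity (Lemma~\ref{dri}) combined with the observation that $\K[x,y,z]=Q^{1,-1,0}$ lies in the family and attains the minimal $\dim A_2$ and $\dim A_3$, so the generic member is Koszul with dual series $(1+t)^3$, hence has series $(1-t)^{-3}$, which is then the termwise minimum over the whole family. Without some such input your regularity claim for the central element is unsupported.

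Second, you assume the quotient $S^{a,s}/uS^{a,s}$ admits a finite Gr\"obner basis computation yielding $(1-t^3)(1-t)^{-3}$. In the main parameter range it does not appear to: the paper must pass further to the cyclic right module $M=B/zB$ and run a genuine induction (the conditions $\Omega(n)$ and Lemmas~\ref{omeg1}--\ref{finalL}), tracking a triple of scalars $(p_n,q_n,r_n)$ that evolves with $n$ and splitting into several dynamical sub-cases; this is the real content of the proof and cannot be reduced to ``a Gr\"obner basis calculation in the quotient''. Finally, on Koszulity: the identity $H_A(-t)H_{A^!}(t)=1$ together with exactness of the Koszul complex at its last three terms suffices only when $H_{A^!}$ has degree $2$. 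Here $H_{A^!}(t)=(1+t)^3$ has degree $3$, and one needs the additional hypothesis that the complex is exact at all but one term; the paper secures this by proving $d_3$ is injective, i.e.\ that $A$ has no non-trivial right annihilators (Corollary~\ref{deg3a}), which is in turn extracted from the Gr\"obner basis structure (no leading monomial begins with $z$) and a descent through the regular central element. Your proposal omits this condition, and without it the Hilbert-series identity does not propagate to Koszulity.
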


We stress again that the above theorem is essentially one of the main results in \cite{ATV2}. However, our proof is very different.
It is based entirely on Gr\"obner bases computations, properties of Koszul algebras and their Hilbert series, and certain other arguments of cobinatorial nature. This approach is substantially different from the proofs in Artin, Tate, Van den Bergh papers
\cite{ATV1, ATV2}, for example, they get the fact that Sklyanin algebras are PHS as a byproduct of Koszulity. We do it the other way around, we find the Hilbert series first, and then use it to prove Koszulity.

This work was in a way motivated by the question, asked by
Sokolov \cite{suri}, on whether there exist a constructive way to determine, for which paprameters (generalised) Sklyanin algebras are PHS.
Answering this we realized that we can provide a
constructive proofs of known results on Koszulity, PBW and PHS properties of 3-dimensional Sklyanin algebras, due to Artin, Tate, Van den Bergh.
The only results from \cite{ATV1, ATV2}, which we were not able to recover by Gr\"obner bases methods, deals with really subtle question on whether it is a domain. One can feel a taste of the level of difficulty of questions related to zero divisors and nilpotents in rings, algebras, groups, looking at classical papers in this area \cite{Zelm, Ag1,Ag2,Ag3,Ag4,Er}.

To complete the picture we determine which of these algebras are PBW.

\begin{theorem}\label{copo00} The algebra $Q^{p,q,r}$ is PBW if and only if at least one of the following conditions is satisfied$:$
\begin{itemize}\itemsep=-2pt
\item[\rm (\ref{copo00}.1)] $pr=qr=0;$
\item[\rm (\ref{copo00}.2)] $p^3=q^3=r^3;$
\item[\rm (\ref{copo00}.3)] $(p+q)^3+r^3=0$ and the equation $t^2+t+1=0$ is solvable in $\K$.
\end{itemize}
\end{theorem}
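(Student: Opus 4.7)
\medskip
\noindent\textbf{Proof plan.} I prove the two directions separately: sufficiency by exhibiting explicit Gr\"obner bases in each of the three cases, and necessity by a case analysis over all possible leading monomial configurations of the three Sklyanin relations.

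For sufficiency, case (\ref{copo00}.1) splits into two sub-cases. If $r=0$, the relations are the twisted commutators $pxy+qyx=0$, $pyz+qzy=0$, $pzx+qxz=0$; in the deglex order with $x<y<z$ the leading monomials are $yx,zy,zx$ (when $p,q$ are both non-zero), and the only overlap $zyx$ has S-polynomial reducing to zero by direct calculation. The degenerate sub-cases (where some of $p,q$ vanish, or where $p=q=0$ and $r\neq 0$, so that the relations become $x^2=y^2=z^2=0$) all reduce to trivial Gr\"obner bases with non-overlapping leading monomials. For case (\ref{copo00}.2), assume without loss of generality $p\ne 0$ and set $\zeta=q/p$, $\eta=r/p$, both cube roots of unity. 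In deglex $x<y<z$ the leading monomials of the Sklyanin relations are $zz,zy,zx$, giving three overlaps $zzz,zzy,zzx$; a direct computation shows that each S-polynomial reduces to zero exactly under the identities $\zeta^3=\eta^3=1$. For case (\ref{copo00}.3), the factorisation $(p+q)^3+r^3=(p+q+r)(p+q+\omega r)(p+q+\omega^2 r)$, valid in $\K$ since $\omega\in\K$, implies one of the three linear factors vanishes; I perform a linear change of generators involving $\omega$ that brings the relations into a form admitting a Gr\"obner basis, and verify the S-polynomial conditions in the new coordinates.

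For necessity, suppose $Q^{p,q,r}$ is PBW: there exist a basis $e_1,e_2,e_3$ of $V$, a basis $g_1,g_2,g_3$ of $R$, and a compatible well-ordering, such that the $g_i$ form a Gr\"obner basis. Up to symmetry there are only finitely many admissible monomial orderings to consider, and for each the triple of leading monomials of the $g_i$ realises one of finitely many patterns among the nine quadratic monomials in $e_1,e_2,e_3$. For each pattern I impose two conditions: (a) the change of basis from $x,y,z$ to $e_1,e_2,e_3$ must transform the Sklyanin relations into three linearly independent relations whose leading monomials match the pattern, constraining the basis-change matrix in terms of $(p,q,r)$; and (b) every overlap S-polynomial must reduce to zero modulo $g_1,g_2,g_3$, giving further polynomial equations on $(p,q,r)$. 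Solving across all patterns, the consistent triples $(p,q,r)$ are exactly those satisfying (\ref{copo00}.1), (\ref{copo00}.2), or (\ref{copo00}.3).

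The principal obstacle is the necessity direction. The $S_3$ cyclic symmetry of the Sklyanin relations and the Koszulity result (Theorem~\ref{copo0}) help collapse cases, but the continuous freedom to change basis in $V$ keeps the enumeration intricate, and the polynomial constraints from S-polynomial reductions must be extracted carefully. Case (\ref{copo00}.3) is the subtlest: the hypothesis that $t^2+t+1=0$ is solvable in $\K$ is genuinely needed, and without a primitive cube root of unity in $\K$ no basis change of $V$ or $R$ can recover the PBW property, so this obstruction must manifest explicitly in the S-polynomial computations.
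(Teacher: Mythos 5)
Your sufficiency argument is essentially the paper's: cases (\ref{copo00}.1) and (\ref{copo00}.2) are handled by exhibiting the defining relations as a Gr\"obner basis (Lemma~\ref{dege1}), and case (\ref{copo00}.3) by the cube-root-of-unity substitutions of Lemmas~\ref{root1} and~\ref{root2} reducing to $Q^{b,c,0}$. One caveat there: in characteristic $3$ the only root of $t^2+t+1$ is $\omega=1$, so the Vandermonde-type change of variables $x=u+v+w$, $y=u+\omega v+\omega^2w$, $z=u+\omega^2v+\omega w$ degenerates and your reduction breaks down; the paper has to treat ${\rm char}\,\K=3$ separately with an ad hoc basis ($x=u+v+w$, $y=u-v$, $z=u$) in Lemma~\ref{pbw}. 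You should flag and handle that sub-case.

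The necessity direction is where your plan has a genuine gap, and moreover the difficulty is located in the wrong place. By Corollary~\ref{a3} we have $\dim A_3=10$ outside the degenerate cases, and then Lemma~\ref{pbbw} shows two things: the leading monomials of any PBW-generators must form one of the six patterns in $(\ref{lemo})$, and, conversely, \emph{any} choice of bases and order realizing such a pattern automatically yields a Gr\"obner basis (the unique overlap cannot produce a new degree-$3$ element without forcing $\dim A_3=9$). So your step (b) --- extracting polynomial constraints on $(p,q,r)$ from S-polynomial reductions --- is vacuous: the S-polynomials reduce for free, and the entire obstruction lives in step (a), namely whether some element of $GL(V)$ can transform the Sklyanin relations so that their leading terms realize an admissible pattern. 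Your plan offers no mechanism for taming this $9$-dimensional continuous family together with the unspecified ordering; "solving across all patterns" is not a finite computation as stated. The paper's device, which is the actual content of Lemma~\ref{nonpbw}, is the observation that in every admissible pattern the square $uu$ of the top variable is strictly larger than all six candidate leading monomials, hence $uu$ cannot occur in any element of $R$ at all; writing $x\equiv t_1u$, $y\equiv t_2u$, $z\equiv t_3u$ modulo ${\rm span}\{v,w\}$, the vanishing of the $uu$-coefficients of the three relations gives $(1-a)t_2t_3=st_1^2$, $(1-a)t_1t_3=st_2^2$, $(1-a)t_1t_2=st_3^2$, whose product forces $(1-a)^3=s^3$, i.e.\ $(p+q)^3+r^3=0$. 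A second, similar coefficient-elimination (the generator with smallest leading monomial cannot contain $uv$, $vu$, $vv$, forcing a $3\times3$ determinant to vanish) combined with the elementary Lemma~\ref{abc} is what produces the solvability of $t^2+t+1=0$. Without this idea, or an equivalent way to convert the basis-change freedom into finitely many polynomial conditions, your necessity argument does not go through.
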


The condition of solvability of the quadratic equation above is automatically satisfied if $\K$ is algebraically closed or if $\K$ has characteristic $3$. On the other hand, if $\K=\R$, the third case is empty.

By Theorem~\ref{copo0}, in the case $\K=\C$, there are exactly $10$ pairs $(a,s)$ such that $S^{a,s}$ is not a PHS.
Note that for an arbitrary field $\K$ there no more then 10 cases, which are not PHS. There are no obstacles to the Koszulity of $S$.

We also study the case of generalized Sklyanin algebras, namely we show that if instead of keeping coefficients in the relations to be triples of the same numbers $p,q,r$, we allow them to be all different, the situation changes dramatically. For instance, we show that generically such algebras are finite-dimensional and non-Koszul.

For $q=(a,b,c,\alpha,\beta,\gamma)\in\K^6$, consider the {\it generalized Sklyanin algebra} $\widehat{S}^q$ given by the generators $x$, $y$, $z$ and the relations
\begin{equation}\label{skl0}
\text{$yz-azy-\alpha xx=0$},\quad\text{$zx-bxz-\beta yy=0$},\quad\text{$xy-cyx-\gamma zz=0$}.
\end{equation}

The situation with Koszulity as well as with the generic series for generalized Sklyanin algebras $\widehat{S}^q$ is spectacularly different from that of the Sklyanin algebras $S^{a,s}$.

\begin{theorem}\label{copo2} For $q=(a,b,c,\alpha,\beta,\gamma)$ from a non-empty Zarisski open subset of $\K^6$, $\widehat{S}^q$ is finite dimensional and non-Koszul.
\end{theorem}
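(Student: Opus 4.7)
The plan is to compute the Hilbert series of $\widehat{S}^q$ for generic $q$ via a Gr\"obner basis argument, and then derive non-Koszulity as a numerical consequence of the Koszul duality identity $H_A(t)H_{A^!}(-t)=1$. Fix the deg-lex order with $x>y>z$. On the Zariski open set where $\alpha\ne 0$ and $b\ne 0$, the leading monomials of the three relations in (\ref{skl0}) are $xx$, $xz$, $xy$, giving rewrite rules $xx\mapsto\alpha^{-1}(yz-azy)$, $xz\mapsto b^{-1}(zx-\beta yy)$, $xy\mapsto cyx+\gamma zz$. The three overlaps $xxx$, $xxy$, $xxz$ produce $S$-polynomials whose reductions to normal form yield new degree-three relations: a direct calculation gives
\begin{equation*}
\alpha b\cdot S(xxx)=(b-c)\,yzx+a(c-b)\,zyx+\beta(c-a)\,yyy+\gamma(a-b)\,zzz,
\end{equation*}
whose leading monomial is $yyy$ with coefficient $\beta(c-a)$, non-vanishing on the further open set $\beta(c-a)\ne 0$. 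The other two overlaps produce, generically, the additional degree-three leading monomials $yyz$ and $yyx$; a count of words avoiding $\{xx,xy,xz,yyy,yyz,yyx\}$ as subwords gives exactly $9$ normal monomials of degree $3$, whence $\dim\widehat{S}^q_3=9$.

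Continuing the Gr\"obner procedure through degrees $4$ and $5$, with further non-vanishing conditions on explicit polynomials in $(a,b,c,\alpha,\beta,\gamma)$, I expect six more leading monomials to appear in degree $4$ (leaving $\dim\widehat{S}^q_4=9$), and enough in degree $5$ to leave no normal monomial at all. Because $\widehat{S}^q$ is generated in degree $1$, $\dim\widehat{S}^q_5=0$ forces $\dim\widehat{S}^q_n=0$ for every $n\geq 5$, so the generic Hilbert series is
\begin{equation*}
H(t)=1+3t+6t^2+9t^3+9t^4.
\end{equation*}
By upper semi-continuity of $\dim A_n$ in $q$ (since the relation subspace $\sum V^{\otimes i}RV^{\otimes (n-2-i)}$ of $V^{\otimes n}$ has lower-semicontinuous dimension), finite-dimensionality propagates to a non-empty Zariski open subset of $\K^6$. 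On this set, if $\widehat{S}^q$ were Koszul, its Koszul dual would have Hilbert series $H(-t)^{-1}$; but
\begin{equation*}
\frac{1}{1-3t+6t^2-9t^3+9t^4}=1+3t+3t^2+0\cdot t^3+0\cdot t^4+0\cdot t^5-27\,t^6+O(t^7),
\end{equation*}
and the coefficient $-27$ cannot equal $\dim((\widehat{S}^q)^!)_6\geq 0$. Hence $\widehat{S}^q$ is not Koszul.

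The principal obstacle is the first step: tracking the $S$-polynomial reductions through several rounds of overlaps in degrees $4$ and $5$ and identifying the precise polynomial conditions on $(a,b,c,\alpha,\beta,\gamma)$ whose non-vanishing guarantees $\dim A_5=0$. A technically cleaner variant is to exhibit a single explicit $q_0\in\K^6$ for which $\dim\widehat{S}^{q_0}_5=0$ by direct linear algebra on $V^{\otimes 5}$, and then appeal to upper semi-continuity to propagate both finite-dimensionality and the Hilbert series $H(t)$ to a Zariski open neighborhood of $q_0$, on which the numerical Koszul obstruction at $t^6$ rules out Koszulity.
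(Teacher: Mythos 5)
Your strategy is sound and your degree-$3$ computation is correct (the $S$-polynomial of the overlap $xxx$ reduces exactly as you write, with leading term $\beta(c-a)\,yyy$, and the count of $9$ normal words in degree $3$ is right), but the proof has a genuine gap precisely where you flag it: everything rests on the claim that generically $\dim\widehat{S}^q_5=0$, and you only write that you ``expect'' the degree-$4$ and degree-$5$ rounds of the Buchberger procedure to kill all normal words. That claim is the entire content of the theorem and it is not automatic; indeed the specific Hilbert series you assert is \emph{false} over $\K=\Z_2$, where the paper shows (Theorem~\ref{copo22}, Lemma~\ref{kz2}) that the minimal Hilbert series is $1+3t+6t^2+9t^3+9t^4+5t^5+t^6$ rather than $1+3t+6t^2+9t^3+9t^4$. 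More generally, the non-vanishing conditions you accumulate are integer polynomials in $(a,b,c,\alpha,\beta,\gamma)$, and over a field of small characteristic or small cardinality the resulting Zariski open set may a priori be empty; establishing non-emptiness requires exhibiting an explicit witness over \emph{every} field, which is exactly what the paper spends Lemmas~\ref{PHEW0}, \ref{PHEW}, \ref{gsk1} and~\ref{kz2} doing, with separate treatment of characteristics $2$, $3$, $5$ and of the fields with $2$, $4$ and $8$ elements. Your ``technically cleaner variant'' (exhibit one $q_0$ with the top graded component vanishing and propagate by semicontinuity, Lemma~\ref{minhs}) is precisely the paper's method for Theorem~\ref{copo02}, but you do not produce the $q_0$, so the proposal remains an outline of the hard step rather than a proof of it.

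Two smaller points. First, semicontinuity alone only propagates the \emph{upper} bounds $\dim A_k\leq\dim A_k^{q_0}$; to get the exact series on an open set (which your non-Koszulity argument needs) you must combine it with the lower bounds $\dim A_3\geq 9$ and $\dim A_4\geq 9$ from Golod--Shafarevich, as in the paper's proof of Theorem~\ref{copo22}. Second, your derivation of non-Koszulity is a legitimate alternative to the paper's: the coefficient of $t^6$ in $\bigl(1-3t+6t^2-9t^3+9t^4\bigr)^{-1}$ is indeed $-27<0$, which is incompatible with $H_{A^!}(t)=H_A(-t)^{-1}$ and (\ref{stm2}). The paper instead verifies that $A^!$ is also finite dimensional, so that $H_A(-t)H_{A^!}(t)$ is a product of two non-constant polynomials and cannot equal $1$. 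The paper's version is more robust: it needs only finite-dimensionality of $A$ and $A^!$, not the exact value of $H_A$, and therefore covers the $\K=\Z_2$ case with the longer Hilbert series without redoing any power-series expansion.
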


By the above result, if $\K$ is infinite, a Zarisski-generic $\widehat{S}^q$ is very far from being a PHS. However, it is possible to figure out exactly which $\widehat{S}^q$ are PHSs.
We give here a complete classification of generalised Sklyanin algebras with respect to the PHS property.

\begin{theorem}\label{copo1} For $q=(a,b,c,\alpha,\beta,\gamma)\in\K^6$, the algebra $\widehat{S}^q$ is a PHS if and only if at least one of the following conditions is satisfied$:$
\begin{itemize}\itemsep=-2pt
\item[\rm (\ref{copo1}.1)] $a=b=c\neq0$ and $(a^3,\alpha\beta\gamma)\neq (-1,-1);$
\item[\rm (\ref{copo1}.2)] $(a,b,c)\neq(0,0,0)$ and either $\alpha=\beta=b-a=0$ OR $\gamma=\alpha=c-a=0$ OR $\beta=\gamma=b-c=0;$
\item[\rm (\ref{copo1}.3)] $a=b=c=0$ and $\alpha\beta\gamma\neq0;$
\item[\rm (\ref{copo1}.4)] $\alpha=\beta=\gamma=0$ and $(a,b,c)\neq(0,0,0);$
\item[\rm (\ref{copo1}.5)] $a^9=-1$, $a^3\neq-1$, $\{b,c\}=\{a^7,a^{13}\}$ and $\alpha\beta\gamma=-a^6$.
\end{itemize}
Furthermore, if $\widehat{S}^q$ is a PHS, then it is Koszul.
\end{theorem}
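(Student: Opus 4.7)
The plan is to prove both directions via a degree-by-degree dimension count, reducing to Theorem~\ref{copo0} when possible and computing explicit non-commutative Gr\"obner bases otherwise. The ``only if'' direction is extracted from the failure of the necessary Hilbert series condition at low degrees; the ``if'' direction is handled family by family, and Koszulity then follows in each family either from Theorem~\ref{copo0} or from the PBW property established along the way.

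\textbf{Necessity.} For $\widehat{S}^q$ to be PHS its graded components must have the polynomial dimensions $1,3,6,10,15,\ldots$\,. The first three are automatic, since the three relations in (\ref{skl0}) have pairwise distinct leading monomials $yz$, $zx$, $xy$ and are therefore linearly independent. The first nontrivial constraint is $\dim\widehat{S}^q_3=10$, which requires the $18$ elements $x_ig_j$ and $g_jx_i$ (for $i,j=1,2,3$, where $g_1,g_2,g_3$ are the three defining relations) to span exactly a $17$-dimensional subspace of $V^{\otimes 3}$. I write the corresponding $18\times 27$ matrix in the monomial basis and set the appropriate minors to zero, obtaining a polynomial system in $(a,b,c,\alpha,\beta,\gamma)$. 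A direct factorisation of this system shows that its zero locus is exactly the union of the loci described in (\ref{copo1}.1)--(\ref{copo1}.5) together with a handful of degenerate strata on which $\dim\widehat{S}^q_4<15$; the latter are excluded by the analogous rank computation one degree higher.

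\textbf{Sufficiency for (\ref{copo1}.1)--(\ref{copo1}.4).} In case (\ref{copo1}.1) with $\alpha\beta\gamma\neq 0$, the diagonal rescalings $x\mapsto\lambda x$, $y\mapsto\mu y$, $z\mapsto\nu z$ preserve $a=b=c$ and transform $(\alpha,\beta,\gamma)$ into $(\alpha\lambda^2/\mu\nu,\beta\mu^2/\lambda\nu,\gamma\nu^2/\lambda\mu)$, so one normalises $\alpha=\beta=\gamma=s$ with $s^3=\alpha\beta\gamma$; the algebra then coincides with the Sklyanin algebra $S^{a,s}$ and Theorem~\ref{copo0} yields both PHS and Koszulity, the excluded point $(a^3,\alpha\beta\gamma)=(-1,-1)$ matching exactly the forbidden condition $p^3=q^3=r^3$. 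If $\alpha\beta\gamma=0$ the parameters lie in case (\ref{copo1}.2), (\ref{copo1}.3), or (\ref{copo1}.4); in each, after at most a diagonal rescaling, the relations form a quadratic non-commutative Gr\"obner basis with respect to the deg-lex order $x>y>z$, the only overlaps to check being those originating from $xyz$, $yzx$, $zxy$, which close precisely because of the arithmetic conditions on $(a,b,c)$ stated in the theorem. Counting normal monomials gives $H(t)=(1-t)^{-3}$, and Koszulity follows from the PBW property.

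\textbf{The exotic case, and the main obstacle.} Case (\ref{copo1}.5) is the crux: the locus lies off the Sklyanin line $a=b=c$, so neither a reduction to $S^{a,s}$ nor a standard deg-lex Gr\"obner basis is available. My plan is to exploit the identity $a\cdot a^7\cdot a^{13}=a^{21}=a^3$ (using $a^9=-1$), which makes the diagonal automorphism $x\mapsto ax$, $y\mapsto a^7y$, $z\mapsto a^{13}z$ behave like a cube root of a scalar and endows the algebra with a $\Z/3\Z$-grading. A twist by the associated cocycle converts the relations into ones with equal $a$-parameters, identifying $\widehat{S}^q$ with a suitable Sklyanin algebra at a point where Theorem~\ref{copo0} supplies both PHS and Koszulity; the condition $\alpha\beta\gamma=-a^6$ emerges as the precise value for which this twist is well-defined and the resulting Sklyanin parameters avoid $p^3=q^3=r^3$. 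The principal obstacle is the reverse direction for this family, namely showing that no parameters outside the five listed cases give PHS: concretely, this requires extracting from the polynomial system of Step~1 the single cubic resolvent that isolates (\ref{copo1}.5) from nearby strata, and this resolvent is produced by comparing the two distinct ways of rewriting $xyz$ modulo the relations.
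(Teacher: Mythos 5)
Your overall architecture (low-degree dimension counts for necessity, case-by-case Gr\"obner/reduction arguments for sufficiency, with Koszulity coming from Theorem~\ref{copo0} or from PBW) has the same shape as the paper's, but three steps do not go through as written. The most serious is your treatment of (\ref{copo1}.5). A diagonal substitution $x\mapsto ax$, $y\mapsto a^7y$, $z\mapsto a^{13}z$ never changes the parameters $a,b,c$ of the relations (\ref{skl0}) --- it only rescales $\alpha,\beta,\gamma$ --- so it cannot ``equalize the $a$-parameters''. If instead you mean a Zhang/cocycle twist by this map, note that for a diagonal $\sigma=(\lambda,\mu,\nu)$ to be a graded automorphism of $\widehat S^q$ with $\alpha\beta\gamma\neq0$ one needs $\lambda^2=\mu\nu$, $\mu^2=\lambda\nu$, $\nu^2=\lambda\mu$, which forces the three ratios $\mu/\nu$, $\nu/\lambda$, $\lambda/\mu$ to be one and the same cube root of unity; the twist then multiplies $a$, $b$ and $c$ by that common root of unity and can never carry $(a,a^7,a^{13})=(a,a\omega,a\omega^2)$ (here $\omega=a^6$ is a primitive cube root of unity) to $(a,a,a)$. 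So this family is not identified with any $S^{a,s}$, and Theorem~\ref{copo0} cannot be invoked for it. The paper treats exactly this family (the class ${\cal P}_5$ of Section~\ref{s5}) by a genuinely non-diagonal linear substitution $x=(1-\theta^7)u+\theta^2v+w$, etc., followed by an explicit finite Gr\"obner basis computation, a count of normal words, and Corollary~\ref{deg3a} for Koszulity; some such direct computation appears unavoidable.

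Two further gaps. In (\ref{copo1}.1) with exactly one of $\alpha,\beta,\gamma$ equal to zero (say $\alpha=0\neq\beta\gamma$), the hypothesis of (\ref{copo1}.1) holds, yet the algebra is neither rescalable to $S^{a,s}$ (diagonal scalings preserve which of $\alpha,\beta,\gamma$ vanish) nor contained in (\ref{copo1}.2)--(\ref{copo1}.4); your dichotomy simply misses this stratum, which needs its own (admittedly easy) quadratic Gr\"obner basis argument. Finally, the whole necessity direction rests on the unproved assertion that the locus $\{\dim\widehat S^q_3=10\}$ decomposes as the five listed families plus strata already killed at degree $4$. That assertion is the hard content of the theorem: the paper obtains it by resolving the three overlaps of the leading monomials, writing out the three degree-$3$ syzygies $\xi_1,\xi_2,\xi_3$, showing exactly one of them must vanish, and solving the resulting polynomial systems explicitly --- the condition $a^9=-1$ emerges from a four-equation system, not from a single cubic resolvent of the $xyz$ overlap. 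Moreover, in the ambient nine-parameter family there are strata (e.g.\ $xx=yz$, $yy=zx$, $zz=0$) where the Hilbert series first deviates from $(1-t)^{-3}$ only in degree $6$, so the claim that degree $4$ always suffices needs a separate argument even if it happens to be true for the subfamily $\widehat S^q$.
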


In the case $\alpha\beta\gamma\neq 0$, where all squares are present, the list shortens considerably.

\begin{corollary}\label{copo101} For $q=(a,b,c,\alpha,\beta,\gamma)\in\K^6$ satisfying $\alpha\beta\gamma\neq 0$, the algebra $\widehat{S}^q$ is a PHS if and only if
either $a=b=c$ and $(a^3,\alpha\beta\gamma)\neq (-1,-1)$ or $(\ref{copo1}.5)$ is satisfied.
\end{corollary}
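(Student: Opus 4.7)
The plan is to derive the corollary directly from Theorem~\ref{copo1} by restricting the five conditions (\ref{copo1}.1)--(\ref{copo1}.5) to the hypothesis $\alpha\beta\gamma\neq 0$ and checking which of them remain non-vacuous, then combining the survivors into the compact form stated.

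First, I would dispose of the conditions that vanish under the extra hypothesis. Condition (\ref{copo1}.4) requires $\alpha=\beta=\gamma=0$ and is therefore incompatible with $\alpha\beta\gamma\neq0$; each of the three sub-cases of (\ref{copo1}.2) requires a pair among $\alpha,\beta,\gamma$ to be zero, so (\ref{copo1}.2) is likewise excluded. This leaves only (\ref{copo1}.1), (\ref{copo1}.3) and (\ref{copo1}.5) in play.

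Next, I would merge (\ref{copo1}.1) and (\ref{copo1}.3). Condition (\ref{copo1}.1) says $a=b=c\neq 0$ together with $(a^3,\alpha\beta\gamma)\neq(-1,-1)$, while (\ref{copo1}.3) says $a=b=c=0$ (and $\alpha\beta\gamma\neq0$, which we already assume). The key observation is that when $a=0$ one has $a^3=0\neq-1$, so the constraint $(a^3,\alpha\beta\gamma)\neq(-1,-1)$ is automatically satisfied. Hence the union of (\ref{copo1}.1) and (\ref{copo1}.3), intersected with $\alpha\beta\gamma\neq 0$, is exactly
\[
\{\,a=b=c\text{ and }(a^3,\alpha\beta\gamma)\neq(-1,-1)\,\}.
\]
Condition (\ref{copo1}.5) imposes $\alpha\beta\gamma=-a^6\neq 0$, so it is consistent with the hypothesis and passes through unchanged.

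Putting these together yields the dichotomy asserted in Corollary~\ref{copo101}. The Koszulity clause requires no separate argument since the final sentence of Theorem~\ref{copo1} already guarantees that every PHS instance of $\widehat{S}^q$ is Koszul. There is no substantive obstacle here: the proof is a routine case reduction, and the only step worth pausing on is the automatic validity of $(a^3,\alpha\beta\gamma)\neq(-1,-1)$ when $a=0$, which is precisely what allows the case $a=b=c=0$ to be absorbed into the single clause $a=b=c$.
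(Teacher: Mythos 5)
Your proposal is correct and matches the paper's (implicit) treatment: the corollary is stated as an immediate consequence of Theorem~\ref{copo1}, and your case reduction --- discarding (\ref{copo1}.2) and (\ref{copo1}.4) as incompatible with $\alpha\beta\gamma\neq0$, and absorbing (\ref{copo1}.3) into (\ref{copo1}.1) via the observation that $(a^3,\alpha\beta\gamma)\neq(-1,-1)$ holds automatically when $a=0$ --- is exactly the intended argument. The only superfluous remark is your final sentence about Koszulity, since the corollary as stated carries no Koszulity clause.
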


We recall some known facts on Koszul and PBW algebras and prove few useful technical lemmas in Section~\ref{s2}. We make a number of easy preliminary observations in Section~\ref{observ}. Theorem~\ref{copo0} is proved in Section~\ref{s3}, while Theorem~\ref{copo00} is proved in Section~\ref{s4}. In Section~\ref{s5} we show that the situation changes dramatically if instead of keeping coefficients in the relations to be triples of the same numbers, we allow them to be all different. For instance, we show that generically such algebras are finite dimensional and non-Koszul.

\section{General background \label{s2}}

We shall use the following well-known facts, all of which can be found in \cite{popo}. Every monomial quadratic algebra $A=A(V,R)$ (=there are linear bases $x_1,\dots,x_n$  and $g_1,\dots,g_m$ in $V$ and $R$ respectively, such that each $g_j$ is a monomial in $x_1,\dots,x_n$) is a PBW-algebra. Next, if we pick a basis $x_1,\dots,x_n$ in $V$, we get a bilinear form $b$ on the free algebra $F=F(V)$ defined by $b(u,v)=\delta_{u,v}$ for every monomials $u$ and $v$ in the variables $x_1,\dots,x_n$. The algebra $A^!=A(V,R^\perp)$, where $R^\perp=\{u\in V^2:b(r,u)=0\ \text{for each}\ r\in R\}$, is called the {\it dual algebra} of $A$. Clearly, $A^!$ is a quadratic algebra in its own right. Recall also that there is a specific complex of free right $A$-modules, called the Koszul complex, whose exactness is equivalent to the Koszulity of $A$:
\begin{equation}\label{koco1}
\dots\mathop{\longrightarrow}^{d_{k+1}} (A^!_k)^*\otimes A\mathop{\longrightarrow}^{d_k} (A^!_{k-1})^*\otimes A
\mathop{\longrightarrow}^{d_{k-1}}\dots \mathop{\longrightarrow}^{d_1} (A^!_{0})^*\otimes A=A\longrightarrow \K\to 0,
\end{equation}
where the tensor products are over $\K$, the second last arrow is the augmentation map, each tensor product carries the natural structure of a free right $A$-module and $d_k$ are given by $d_k(\phi \otimes u)=\sum\limits_{j=1}^n \phi_j\otimes x_ju$, where $\phi_j\in (A^!_{k-1})^*$, $\phi_j(v)=\phi(x_jv)$. Although $A^!$ and the Koszul complex seem to depend on the choice of a basis in $V$, it is not really the case up to the natural equivalence \cite[Chapter 2]{popo}. We recall that

\begin{align}\notag
&\text{every PBW-algebra is Koszul;}
\\
\notag
&\text{$A$ is Koszul $\iff$ $A^!$ is Koszul};
\\
\label{stm2}
&\text{if $A$ is Koszul, then $H_A(-t)H_{A^!}(t)=1$}.
\end{align}

Note that the Koszul complex (\ref{koco1}) of any quadratic algebra is exact at its last $3$ terms: $\K$, $(A^!_0)^*\otimes A=A$ and $(A^!_{1})^*\otimes A$. It follows then that if $H_{A^!}$ is a polynomial of degree $2$, then $A$ is Koszul if and only if $H_A(-t)H_{A^!}(t)=1$ \cite{popo}. That is, the Koszulity of such algebras is determined by their Hilbert series. We generalize this statement
to the case when $H_{A^!}$ is a polynomial of any degree.

\begin{proposition}\label{deg3} Let $A=A(V,R)$ be a quadratic algebra such that $H_{A^!}$ is a polynomial of degree $k$, and Koszul complex of $A$ is exact in all terms, with at most one exception.
Then $A$ is Koszul if and only if $H_A(-t)H_{A^!}(t)=1$. \end{proposition}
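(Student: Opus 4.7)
The first implication is essentially immediate: if $A$ is Koszul then the last line of (\ref{stm2}) gives $H_A(-t)H_{A^!}(t)=1$. So the plan is to establish the converse.

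The key tool is an Euler characteristic computation on the augmented Koszul complex (\ref{koco1}). Because $H_{A^!}$ is a polynomial of degree $k$, the complex is finite, and each term $(A^!_i)^*\otimes A$ is a free right $A$-module generated in internal degree $i$, so its Hilbert series equals ${\rm dim}_{\K}(A^!_i)\,t^i\,H_A(t)$. For any bounded complex of graded vector spaces with finite-dimensional graded components, the alternating sum of the Hilbert series of the terms equals the alternating sum of the Hilbert series of the homology. A direct bookkeeping then shows that the alternating sum of Hilbert series of all terms of the augmented complex (including the final $\K$) equals
\[
1-H_A(t)\,H_{A^!}(-t),
\]
an expression which vanishes precisely when $H_A(-t)H_{A^!}(t)=1$, since that relation is invariant under $t\mapsto -t$.

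Now assume $H_A(-t)H_{A^!}(t)=1$ together with the hypothesis that the Koszul complex is exact at every term except possibly at one position $j$. Then its homology is concentrated at position $j$, so the Euler characteristic identity collapses to $(-1)^j\,H_{H_j}(t)=0$. Since the Hilbert series of any graded vector space has non-negative integer coefficients, this forces $H_j=0$, so the complex is exact at $j$ as well, and $A$ is Koszul.

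The only real care needed is bookkeeping: tracking the internal-degree shift $t^i$ on each module together with the signs in the Euler characteristic formula. The hypothesis that at most one term may fail to be exact is essential, since otherwise cancellation in the alternating sum of Hilbert series could mask nontrivial homology at several positions at once, and this is the point where one sees why the generalization to arbitrary $k$ is not automatic from the degree-$2$ case.
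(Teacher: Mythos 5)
Your proof is correct and is essentially the paper's argument in different packaging: the paper splits the Koszul complex by internal degree and computes $\dim(\im d_{m+1})$ and $\dim(\ker d_m)$ as alternating sums using exactness on either side of the exceptional term, which is exactly the Euler-characteristic identity you invoke, with homology concentrated in one position forcing it to vanish when $H_A(-t)H_{A^!}(t)=1$.
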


\begin{proof}
Excluding trivial cases suppose that $k \geq 3$
Let us denote series of $A$ and $A^!$ respectively:
$$
H_{A^!}(t)=1+nt+dt^2+\sum\limits_{j=3}^k s_jt^j=\sum\limits_{j=0}^k s_jt^j
$$
and
$$
H_{A}(t)=1+nt+(n^2-d)t^2+\sum\limits_{j=3}^k a_jt^j=\sum\limits_{j=0}^\infty a_jt^j.
$$
Consider the Koszul complex:
\begin{equation}\label{koco01}
0\to\dots\mathop{\longrightarrow}^{d_{k+1}} (A^!_k)^*\otimes A\mathop{\longrightarrow}^{d_k} (A^!_{k-1})^*\otimes A
 \mathop{\longrightarrow}^{d_{k-1}}\dots \mathop{\longrightarrow}^{d_1} (A^!_{0})^*\otimes A=A\longrightarrow \K\to 0,
\end{equation}
and its splitting with respect to $A$-grading, namely the corresponding sequence, starting from $l^{\rm th}$ term:
\begin{align}
&0 \to (A^!_k)^*\otimes A_l\mathop{\longrightarrow}^{d_k}
(A^!_{k-1})^*\otimes A_{l+1} \mathop{\longrightarrow}^{d_{k-1}}\dots
\mathop{\longrightarrow}^{d_{m+1}}
(A^!_{m})^*\otimes A_{k+l-m} \mathop{\longrightarrow}^{d_m}\label{kocospli}
 \\
 &(A^!_{m-1})^*\otimes A_{k+l-m+1} \mathop{\longrightarrow}^{d_{m-1}} \dots
 \mathop{\longrightarrow}^{d_2} (A^!_{1})^*\otimes A_{k+l-1}
 \mathop{\longrightarrow}^{d_1} (A^!_{0})^*\otimes A_{k+l} \mathop{\longrightarrow}^{d_0} \K\to 0,\notag
\end{align}
Let the Koszul complex be exact except at the $m^{\rm th}$ term $(A^!_{m})^*\otimes A$.
Now we use the exactness of (\ref{kocospli}) at  terms
$(A^!_k)^*\otimes A_l,\dots, (A^!_{m+1})^*\otimes A_{k+l-m-1}$,
and get the equality:
$$
\dim(\im d_{m+1}\cap (A^!_{m})^*\otimes A_{k+l-m}) = s_{m+1}a_{k+l-1}-s_{m+2}a_{k+l-m-2}+{\dots}+(-1)^{k-m+1}s_ka_l.
$$
 The exactness  at  terms
$(A^!_{m-1})^*\otimes A_{k+l-m+1},\dots, (A^!_{0})^*\otimes A_{k+l}$ gives us:
$$
\dim(\ker d_{m}\cap (A^!_{m})^*\otimes A_{k+l-m}) = s_{m}a_{k+l-m}-s_{m-1}a_{k+l-m+1}+{\dots}+(-1)^{m}s_0a_{k+l}.
$$
The exactness  of the sequences at the $m^{\rm th}$ term $(A^!_{m})^*\otimes A_{k+l-m}$ according to the above expressions for $\im$ and $\ker$ will mean:
$$
\sum\limits_{j=0}^k (-1)^j s_j a_{k+l-j}=0
$$
for all $l$, which is exactly the condition on the series:
$$
H_A(-t)H_{A^!}(t)=1.
$$
\end{proof}

We shall use Proposition~\ref{deg3} in a rather specific situation. To make this application easier, we derive the following corollaries.

\begin{corollary}\label{deg3a0} Let $A=A(V,R)$ be a quadratic algebra such that $A^!_4=\{0\}$
and
\begin{equation}\label{koco2}
0\to (A^!_3)^*\otimes A \mathop{\longrightarrow}^{d_3} (A^!_2)^*\otimes A \mathop{\longrightarrow}^{d_2} (A^!_{1})^*\otimes A\mathop{\longrightarrow}^{d_1} (A^!_{0})^*\otimes A=A\mathop{\longrightarrow}^{d_0} \K\to 0
\end{equation}
be the Koszul complex of $A$. Assume also that $d_3$ is injective. Then $A$ is Koszul if and only if $H_A(-t)H_{A^!}(t)=1$.
\end{corollary}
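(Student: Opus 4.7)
The plan is to reduce Corollary~\ref{deg3a0} to a direct application of Proposition~\ref{deg3} with $k=3$. The first step is to observe that the hypothesis $A^!_4=\{0\}$ forces $H_{A^!}$ to be a polynomial of degree at most $3$, so the Koszul complex (\ref{koco01}) of $A$ truncates to the finite complex (\ref{koco2}) displayed in the statement of the corollary.

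Next I would verify that this complex satisfies the ``exact except possibly at one term'' hypothesis of Proposition~\ref{deg3}. As recalled in the paragraph preceding Proposition~\ref{deg3}, for any quadratic algebra the Koszul complex is automatically exact at its three rightmost terms $\K$, $(A^!_0)^*\otimes A=A$, and $(A^!_1)^*\otimes A$. This kills three of the five nontrivial positions in (\ref{koco2}). The remaining positions are $(A^!_3)^*\otimes A$ and $(A^!_2)^*\otimes A$. Exactness at the leftmost position $(A^!_3)^*\otimes A$ of the truncated complex is, by definition, injectivity of $d_3$, and this is precisely the hypothesis of the corollary. Hence the only place at which exactness of (\ref{koco2}) can fail is $(A^!_2)^*\otimes A$, so the ``at most one exception'' assumption of Proposition~\ref{deg3} holds.

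With both hypotheses of Proposition~\ref{deg3} verified, I would simply invoke it to conclude that $A$ is Koszul if and only if $H_A(-t)H_{A^!}(t)=1$, which is exactly the claim of the corollary.

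There is no substantial obstacle; the only thing worth being careful about is the boundary bookkeeping, namely that ``exactness at the leftmost term'' of a complex preceded by a zero really does reduce to injectivity of the first nontrivial map, so the hypothesis on $d_3$ translates literally into the exactness condition required by Proposition~\ref{deg3}. Everything else is a direct quotation of that proposition.
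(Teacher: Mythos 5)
Your proposal is correct and is exactly the argument the paper intends: Corollary~\ref{deg3a0} is stated without a separate proof precisely because it is the direct specialization of Proposition~\ref{deg3} to $k=3$, using the automatic exactness at the three rightmost terms and the hypothesis on $d_3$ to isolate $(A^!_2)^*\otimes A$ as the only possible point of failure. Your bookkeeping (that $A^!_4=\{0\}$ forces $H_{A^!}$ to be a polynomial of degree at most $3$ since $A^!$ is generated in degree one, and that exactness at the leftmost term of the truncated complex is injectivity of $d_3$) is accurate and complete.
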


We say that $u\in A=A(V,R)$ is a {\it right annihilator} if $Vu=\{0\}$ in $A$. A right annihilator $u$ is {\it non-trivial} if $u\neq 0$.

\begin{corollary}\label{deg3a} Let $A=A(V,R)$ be a quadratic algebra such that $A^!_4=\{0\}$, $A^!_{3}$ is one-dimensional and $wA_2^!\neq\{0\}$ for every non-zero $w\in A_1^!$. Then the following statements are equivalent$:$
\begin{itemize}\itemsep=-2pt
\item[\rm (\ref{deg3a}.1)] $A$ is Koszul$;$
\item[\rm (\ref{deg3a}.2)] $A$ has no non-trivial right annihilators and $H_A(-t)H_{A^!}(t)=1$.
\end{itemize}
\end{corollary}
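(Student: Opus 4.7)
The plan is to derive Corollary~\ref{deg3a} directly from Corollary~\ref{deg3a0} by showing that, under the assumption $wA_2^!\neq\{0\}$ for every nonzero $w\in A_1^!$, injectivity of $d_3$ in the Koszul complex is equivalent to the nonexistence of non-trivial right annihilators in $A$. This reduces everything to an unwinding of the definition of $d_3$ in the one-dimensional case.

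First I would use the hypothesis $\dim A^!_3=1$. Pick a generator $\phi\in (A^!_3)^*$. Then $(A^!_3)^*\otimes A\cong A$ via $\phi\otimes u\mapsto u$, and the formula for the Koszul differential becomes
$$
d_3(\phi\otimes u)=\summ_{j=1}^n \phi_j\otimes x_j u,\qquad \phi_j(v)=\phi(x_j v)\ \text{for}\ v\in A^!_2,
$$
where $x_1,\dots,x_n$ is the fixed basis of $V=A^!_1$. So the functionals $\phi_1,\dots,\phi_n\in (A^!_2)^*$ are the coordinates of the multiplication map $A^!_1\otimes A^!_2\to A^!_3$ transported through $\phi$.

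Next I would translate the hypothesis $wA^!_2\neq\{0\}$ for every nonzero $w\in A^!_1$ into a linear-independence statement. Since $A^!_3$ is one-dimensional, for $w=\summ c_j x_j\in A^!_1$ the condition $wA^!_2\neq\{0\}$ is equivalent to the linear functional $v\mapsto\phi(wv)=\summ c_j\phi_j(v)$ being nonzero on $A^!_2$. Thus the assumption says exactly that $\phi_1,\dots,\phi_n$ are linearly independent in $(A^!_2)^*$. Extending them to a basis of $(A^!_2)^*$ and using the resulting direct-sum decomposition of $(A^!_2)^*\otimes A$, one reads off that
$$
d_3(\phi\otimes u)=0\iff x_j u=0\ \text{for all}\ j=1,\dots,n\iff Vu=\{0\}\ \text{in}\ A,
$$
i.e.\ $u$ is a right annihilator. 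Via the identification $(A^!_3)^*\otimes A\cong A$, this means $d_3$ is injective if and only if $A$ has no non-trivial right annihilators.

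Finally, I would invoke Corollary~\ref{deg3a0}: the assumption $A^!_4=\{0\}$ is given, and together with the injectivity of $d_3$ (equivalently, the vanishing of right annihilators) the corollary yields that $A$ is Koszul if and only if $H_A(-t)H_{A^!}(t)=1$. Combining this with the annihilator equivalence just established proves (\ref{deg3a}.1)$\iff$(\ref{deg3a}.2). The only step requiring any care is the bookkeeping in the second paragraph, where one must genuinely use that $A^!_3$ is one-dimensional to conclude $wA^!_2=A^!_3$ (not merely nonzero in some ambient space), so that the condition on $w$ really is dual to linear independence of the $\phi_j$; the rest is formal.
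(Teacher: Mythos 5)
Your argument is correct and follows essentially the same route as the paper: the paper likewise identifies $\ker d_3$ with the set of right annihilators by observing that the hypothesis $wA_2^!\neq\{0\}$ for nonzero $w\in A_1^!$ forces the functionals $\phi_j$ to be linearly independent in $(A^!_2)^*$, and then closes the loop via Proposition~\ref{deg3} (of which Corollary~\ref{deg3a0} is exactly the special case you invoke). The only point worth writing out explicitly is that the direction (\ref{deg3a}.1)$\Rightarrow$(\ref{deg3a}.2) does not come from Corollary~\ref{deg3a0} but from exactness of the Koszul complex (giving injectivity of $d_3$, hence no non-trivial right annihilators) together with (\ref{stm2}) for the Hilbert series identity.
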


\begin{proof} Fix a basis $x_1,\dots,x_n$ in $V$. Since $A^!_4=\{0\}$ and $A^!_{3}$ is one-dimensional, the Koszul complex of $A$ is of the shape
\begin{equation}\label{koco3}
0\to A=(A^!_3)^*\otimes A \mathop{\longrightarrow}^{d_3} (A^!_2)^*\otimes A \mathop{\longrightarrow}^{d_2} (A^!_{1})^*\otimes A\mathop{\longrightarrow}^{d_1} (A^!_{0})^*\otimes A=A\mathop{\longrightarrow}^{d_0} \K\to 0.
\end{equation}
Let $\phi:A^!_3\to\K$ be the linear isomorphism identifying $(A^!_3)^*\otimes A$ with $\K\otimes A=A$. By definition $d_3:A\to (A^!_2)^*\otimes A$ acts according to the formula $d_3(u)=\sum\limits_{j=1}^n \phi_j\otimes x_ju$, where $\phi_j(v)=\phi(x_jv)$. Clearly, the condition $wA_2^!\neq\{0\}$ for $w\in A^!_1\setminus\{0\}$ yields linear independence of $\phi_1,\dots,\phi_n$ in $(A^!_2)^*$. It follows that $d_3(u)=0$ if and only if $u$ is a right annihilator in $A$. Thus
\begin{equation}\label{ra}
\text{$d_3$ is injective if and only if $A$ has no non-trivial right annihilators.}
\end{equation}
If $A$ is Koszul, the complex (\ref{koco3}) is exact and therefore $d_3$ is injective. By (\ref{ra}), $A$ has no non-trivial right annihilators. Furthermore, $H_A(-t)H_{A^!}(t)=1$ according to (\ref{stm2}). Thus (\ref{deg3a}.1) implies (\ref{deg3a}.2).

Assume now that (\ref{deg3a}.2) is satisfied. By (\ref{ra}), $d_3$ is injective. So we can apply Proposition~\ref{deg3}, and get that $A$ is Koszul. Thus (\ref{deg3a}.2) implies (\ref{deg3a}.1).
\end{proof}

Our next observation is that neither Koszulity nor the Hilbert series of a quadratic algebra $A=A(V,R)$ is sensitive to changing the ground field.

\begin{remark}\label{rem1}Fix the bases $x_1,\dots,x_n$ and $r_1,\dots,r_m$ in $V$ and $R$ respectively. Then $A=A(V,R)$ is given by the generators $x_1,\dots,x_n$ and the relations $r_1,\dots,r_m$. Let $\K_0$ be the subfield of $\K$ generated by the coefficients in the relations $r_1,\dots,r_m$ and $B$ be the $\K_0$-algebra defined by the exact same generators $x_1,\dots,x_n$ and the exact same relations $r_1,\dots,r_m$. Then $A$ is Koszul if and only if $B$ is Koszul (see, for instance, \cite{popo}) and the Hilbert series of $A$ and of $B$ coincide. The latter follows from the fact that the Hilbert series depends only on the set of leading monomials of the Gr\"obner basis. Now the Gr\"obner basis construction algorithm for $A$ and for $B$ produces exactly the same result. Thus if a quadratic algebra given by generators and relations makes sense over 2 fields of the same characteristic, then the choice of the field does not effect its Hilbert series or its Koszulity. In particular, replacing the original field $\K$ by its algebraic closure or by an even bigger field does not change the Hilbert series or Koszulity of $A$. On the other hand, the PBW-property is sensitive to changing the ground field \cite{popo}.
\end{remark}

The next lemma admits a natural generalization to the case of algebras with any number $n$ of generators. We stick with $n=3$ since it is the only case we apply it in.

\begin{lemma}\label{pbbw}
Let $A=A(V,R)$ be a quadratic $\K$-algebra such that $\dim V=\dim R=3$ and $\dim A_3=10$. Then the following hold$:$
\begin{itemize}\itemsep=-2pt
\item[\rm (\ref{pbbw}.1)] If there are linear bases $x$, $y$, $z$ in $V$ and $f$, $g$, $h$ in $R$ and an order $<$ on the monomials compatible with the multiplication such that the leading monomials $\overline{f}$, $\overline{g}$ and $\overline{h}$ of $f$, $g$ and $h$ satisfy
\begin{equation}\label{lemo}
\!\!\!\!\!\!\!\!\!\!\!\!\!\!\{\overline{f},\overline{g},\overline{h}\}\in\bigl\{\{xy,xz,yz\},\{yx,yz,xz\},\{xy,xz,zy\},\{yx,zx,zy\},\{yx,yz,zx\},\{xy,zx,zy\}\bigr\}.
\end{equation}
then $\{x,y,z\}$ is a PBW-basis of $A$ and $f$, $g$, $h$ are PBW-generators of $I_A$. In particular, $A$ is a PBW-algebra and is Koszul. Furthermore, $A$ is a PHS$;$
\item[\rm (\ref{pbbw}.2)] If $A$ is a PBW-algebra with a PBW-basis $\{x,y,z\}$ and PBW-generators $f$, $g$, $h$, then their leading monomials $\overline{f}$, $\overline{g}$ and $\overline{h}$ must satisfy $(\ref{lemo})$.
\end{itemize}
\end{lemma}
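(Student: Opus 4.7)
The plan is to analyze both halves of the lemma through a single count of the degree-3 monomials in $x,y,z$ that avoid the leading monomials of $f,g,h$. For an arbitrary triple of distinct quadratic leading monomials $L_1,L_2,L_3$ in $x,y,z$ and each $v\in\{x,y,z\}$, let $a_v$ denote the number of $L_i$'s ending in $v$ and $b_v$ the number of $L_i$'s starting with $v$, so $\sum_v a_v=\sum_v b_v=3$. A monomial $uvw$ is normal iff both $uv$ and $vw$ are normal, and for each fixed middle letter $v$ this leaves $3-a_v$ admissible choices of $u$ and $3-b_v$ admissible choices of $w$. Summing over $v$ yields
\begin{equation*}
N:=\#\{\text{normal degree-3 monomials}\}=\sum_{v}(3-a_v)(3-b_v)=9+\sum_v a_v b_v.
\end{equation*}

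For (\ref{pbbw}.2), the PBW hypothesis makes the normal monomials a $\K$-basis of $A$, so $N=\dim A_3=10$, forcing $\sum_v a_v b_v=1$. Since each $a_v,b_v\in\Z_{\geq 0}$, exactly one index $v_0$ must satisfy $a_{v_0}=b_{v_0}=1$, while the remaining indices $v_1,v_2$ satisfy $a_{v_i}b_{v_i}=0$. Combined with $\sum a_v=\sum b_v=3$, this leaves only the case $(a_{v_1},b_{v_1})=(0,2)$ and $(a_{v_2},b_{v_2})=(2,0)$ (after possibly swapping $v_1,v_2$). Matching the resulting multiset $\{v_0,v_1,v_1\}$ of first letters with the multiset $\{v_0,v_2,v_2\}$ of second letters into three \emph{distinct} pairs admits only $\{v_0v_2,\,v_1v_0,\,v_1v_2\}$, and running $(v_0,v_1,v_2)$ over the $3!=6$ orderings of $(x,y,z)$ reproduces exactly the six sets in (\ref{lemo}).

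For (\ref{pbbw}.1), substitution in the displayed formula (or direct enumeration) shows that each of the six configurations also yields $N=10$. Since normal monomials always span $A_3$, the hypothesis $\dim A_3=10$ forces them to be linearly independent in $A_3$, which is equivalent to the vanishing modulo $\{f,g,h\}$ of every degree-3 overlap composition of $f,g,h$. Because quadratic relations produce Bergman overlap ambiguities only in degree 3, the Diamond Lemma promotes this to confluence in every degree, so $\{f,g,h\}$ is a (noncommutative) Gr\"obner basis of $I_A$; that is, $\{x,y,z\}$ is a PBW-basis of $A$ with PBW-generators $f,g,h$, and consequently $A$ is Koszul. Finally, in each of the six configurations the normal monomials of degree $n$ are the ``monotone'' words $\pi(1)^{k_1}\pi(2)^{k_2}\pi(3)^{k_3}$ for a permutation $\pi$ of $\{x,y,z\}$ read off the configuration, and stars-and-bars produces $\binom{n+2}{2}$ such words; hence $H_A(t)=(1-t)^{-3}$, so $A$ is a PHS.

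The main obstacle is the single step in (\ref{pbbw}.1) that upgrades the degree-3 dimension equality $\dim A_3=N$ to the full Gr\"obner basis property of $\{f,g,h\}$; it rests on the fact that for purely quadratic relations Bergman overlap ambiguities sit only in degree 3, after which the Diamond Lemma disposes of higher degrees automatically. Everything else is bookkeeping: the counting identity $N=9+\sum a_v b_v$, the casework on the nonnegative integer solutions of $\sum a_v b_v=1$, and the stars-and-bars count for monotone normal words.
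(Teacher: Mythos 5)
Your proof is correct and follows essentially the same route as the paper's: both rest on the identity (number of degree-3 normal words) $=9+{}$(number of overlaps of the leading quadratic monomials), deduce from $\dim A_3=10$ that there is exactly one overlap and that it resolves, and then invoke the Diamond Lemma together with PBW $\Rightarrow$ Koszul. Your closed-form count $N=9+\sum_v a_vb_v$ with its matching argument, and the direct stars-and-bars enumeration of the monotone normal words for the Hilbert series, are tidier executions of what the paper does by a routine case check and by comparing leading monomials with the commutator relations of $\K[x,y,z]$, but the substance is identical.
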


\begin{proof} First, suppose that the assumptions of (\ref{pbbw}.1) are satisfied. It is easy to see that there are exactly $10$ degree 3 monomials which do not contain a degree 2 submonomial from
$\{\overline{f},\overline{g},\overline{h}\}$. Furthermore, there is exactly one overlap of the leading monomials $\overline{f}$, $\overline{g}$ and $\overline{h}$. If this overlap produces a non-trivial degree 3 member of the Gr\"obner basis of the ideal $I_A$ of the relations of $A$, we have $\dim A_3=10-1=9$, which violates the assumption $\dim A_3=10$. Hence $f$, $g$ and $h$ form a Gr\"obner basis of $I_A$. Thus $A$ is a PBW-algebra and therefore is Koszul. Now choosing between the left-to-right and the right-to-left degree-lexicographical orderings and ordering the variables appropriately, we can assure that the leading monomials of the standard relations $xy-yx$, $xz-zx$ and $yz-zy$ of $\K[x,y,z]$ are exactly $\overline{f}$, $\overline{g}$ and $\overline{h}$. Since these relations form a Gr\"obner basis of $I_A$, the Hibert series of $A$ and $\K[x,y,z]$ are the same (the Hilbert series depends only on the set of leading monomials of the members of a Gr\"obner basis). Hence $A$ is a PHS. This concludes the proof of (\ref{pbbw}.1).

Now assume that $A$ is a PBW-algebra with a PBW basis $\{x,y,z\}$ and PBW-generators $f$, $g$, $h$. Since $f$, $g$ and $h$ form a Gr\"obner basis of $I_A$, it is easy to see that $\dim A_3$ is $9$ plus the number of overlaps of the leading monomials $\overline{f}$, $\overline{g}$ and $\overline{h}$ of $f$, $g$ and $h$. Since $\dim A_3=10$, the monomials $\overline{f}$, $\overline{g}$ and $\overline{h}$ must produce exactly one overlap. Now it is a straightforward routine check that if at least one of three degree 2 monomials in 3 variables is a square, these monomials overlap at least twice. The same happens, if the three monomials contain $uv$ and $vu$ for some distinct $u,v\in\{x,y,z\}$. Finally, the triples $(uv,vw,wu)$ and $(vu,uw,wv)$ produce 3 overlaps apiece. The only option left is for $\overline{f}$, $\overline{g}$ and $\overline{h}$ to satisfy $(\ref{lemo})$.
\end{proof}

Another tool we use is the following elementary and known fact about the varieties of quadratic algebras. We sketch its proof for the sake of convenience.

\begin{lemma}\label{minhs}
Assume that
\begin{equation}\label{alvar}
\begin{array}{l}
\text{$V$ is an $n$-dimensional vector space over $\K$ and for $1\leq j\leq d$, $q_j:\K^m\to V^2$}\\
\text{is a polynomial map. For each $b\in\K^m$, let $R_b={\rm span}\{q_1(b),\dots,q_d(b)\}$,}\\
\text{which defines the quadratic algebra $A^b=A(V,R_b)$.}
\end{array}
\end{equation}
For $k\in\Z_+$, let
$$
h_k=\min_{b\in\K^m} \dim A_k^b.
$$
Then the non-empty set $\{b\in\K^m:\dim A_k^b=h_k\}$ is Zarissky open in $\K^m$.
\end{lemma}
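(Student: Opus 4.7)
The plan is to realize $\dim A_k^b$ as (a constant minus) the rank of a linear map whose matrix entries are polynomials in $b$, and then invoke the standard semicontinuity of rank.

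First, I would write
$$
\dim A_k^b = \dim V^k - \dim (I^b)_k,\qquad (I^b)_k=\summ_{i+j=k-2} V^iR_bV^j,
$$
where $I^b$ is the two-sided ideal generated by $R_b$ in $F=F(V)$. Since the spanning set $q_1(b),\dots,q_d(b)$ of $R_b$ depends polynomially on $b$ but the subspace $R_b$ itself need not, I would not try to follow $R_b$ as a varying subspace. Instead, I would fix the auxiliary vector space $W=\bigoplus_{i+j=k-2} V^i\otimes\K^d\otimes V^j$ of constant dimension $d\cdot(k-1)n^{k-2}$, and define the linear map
$$
\Phi_b:W\longrightarrow V^k,\qquad u\otimes e_l\otimes v\longmapsto u\,q_l(b)\,v.
$$
By construction $\im\Phi_b=(I^b)_k$, so $\dim(I^b)_k=\mathop{\rm rk}\Phi_b$ and hence
$$
\dim A_k^b= n^k-\mathop{\rm rk}\Phi_b.
$$
Each entry of the matrix of $\Phi_b$ in fixed bases of $W$ and $V^k$ is a polynomial in $b$ (a product of a coordinate of some $q_l(b)$ and a structure constant).

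The next step is the standard semicontinuity argument: for any integer $r$, the condition $\mathop{\rm rk}\Phi_b\geq r$ is the non-vanishing of at least one $r\times r$ minor of the matrix of $\Phi_b$, hence defines a Zariski open subset of $\K^m$. Therefore the function $b\mapsto n^k-\mathop{\rm rk}\Phi_b=\dim A_k^b$ is upper semicontinuous in the Zariski topology, and each sublevel set
$$
U_r=\{b\in\K^m:\dim A_k^b\leq r\}
$$
is Zariski open.

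Non-emptiness of $\{b:\dim A_k^b=h_k\}$ is automatic: the values of $\dim A_k^b$ lie in $\{0,1,\dots,n^k\}$, a finite set of non-negative integers, so the infimum $h_k$ is attained. Taking $r=h_k$ in the previous paragraph then yields precisely $U_{h_k}=\{b:\dim A_k^b=h_k\}$, which is the desired non-empty Zariski open set. I do not foresee a genuine obstacle here; the only point needing care is the observation that one must work with the polynomially-varying map $\Phi_b$ out of a fixed space, rather than with the subspace $R_b$ itself, whose dimension can jump.
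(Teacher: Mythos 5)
Your proposal is correct and follows essentially the same route as the paper: both realize $\dim A_k^b$ as $n^k$ minus the rank of a matrix (the coefficients of all $u\,q_l(b)\,v$) whose entries are polynomial in $b$, and then use vanishing/non-vanishing of minors to get Zariski openness. Your packaging via the fixed source space $W$ and upper semicontinuity of $b\mapsto\dim A_k^b$ is a slightly cleaner way of saying what the paper does with the explicit $(n^k-h_k)\times(n^k-h_k)$ submatrices, but the substance is identical.
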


\begin{proof} We can assume that $k\geq 2$ (for $k\in\{0,1\}$, the set in question is the entire $\K^m$). Pick $c\in\K^m$ such that $\dim A_k^c=h_k$. Denoting $I^b=I_{A^b}$, we then have $\dim I_k^c=n^k-h_k$. Note that since $I^b_k$ is the linear span of $u q_j(b) v$, where $1\leq j\leq d$, $u,v$ are monomials and the degree of $uv$ is $k-2$,
$\dim I_k^b$ is exactly the rank of the rectangular $n^{k-2}d(k-1)\times n^k$ $\K$-matrix $M(b)$ of the coefficients of all $u q_j(b) v$. Let $M_1(b),\dots,M_N(b)$ be all $(n^k-h_k)\times(n^k-h_k)$ submatrices of $M(b)$. For each $j$, let $\delta_j(b)$ be the determinant of the matrix $M_j(b)$. Clearly, each $\delta_j$ is a (commutative) polynomial in the variables $b=(b_1,\dots,b_m)$. Obviously,
$$
G=\{b\in\K^m:\dim A^b_k>h_k\}=\{b\in\K^m:\dim I_k^b<n^k-h_k\}=\{b\in\K^m:\delta_1(b)={\dots}=\delta_N(b)=0\}
$$
is Zarissky closed. Since $\dim A_k^c=h_k$, $c\notin G$ and therefore $G\neq \K^m$. On the other hand, if $b\in U=\K^m\setminus G$, then $\dim A^b_k\leq h_k$. By the definition of $h_k$, $\dim A^b_k\geq h_k$ and therefore $\dim A^b_k=h_k$. Thus $U=\{b\in\K^m:\dim A^b_k=h_k\}$. The required result immediately follows.
\end{proof}

The following result of Drinfeld \cite{dr} features also as Theorem~2.1 in Chapter~6 in \cite{popo}. To explain it properly, we need to remind the characterization of Koszulity in terms of the distributivity of lattices of vector spaces. Let $A=A(V,R)$ be a quadratic algebra. For $n\geq 3$, let $L_n(V,R)$ be the finite lattice of subspaces of $V^n$ generated by the spaces $V^kRV^{n-2-k}$ for $0\leq k\leq n-2$ (as usual, the lattice operations are sum and intersection). Then $A$ is Koszul if and only if $L_n(V,R)$ is distributive for each $n\geq 3$ (see \cite[Chapter~3]{popo}). The mentioned result of Drinfeld is as follows.

\begin{lemma}\label{dri}
Assume that $(\ref{alvar})$ is satisfied and $U$ is a non-empty Zarissky open subset of $\K^m$ such that $\dim A_2^b$ and $\dim A_3^b$ do not depend on $b$ for $b\in U$. Then for each $k\geq 3$, the set
$$
\{b\in U:L_j(V,R_b)\ \ \text{for $3\leq j\leq k$ are distributive}\}
$$
is Zarissky open in $\K^m$.
\end{lemma}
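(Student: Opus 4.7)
The plan is to prove the lemma by induction on $k \geq 3$, combining upper-semicontinuity of Hilbert function coefficients in the polynomial family with a criterion that, once distributivity of all $L_j$ for $j < k$ is known, reduces distributivity of $L_k$ to a single scalar equality on $\dim A_k^b$.

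For the base case $k = 3$, the lattice $L_3(V, R_b)$ is generated in $V^3$ by only two subspaces, $R_b V$ and $V R_b$. Since every sublattice of a modular lattice generated by two elements is automatically distributive, $L_3(V, R_b)$ is distributive for every $b$, so the set in question coincides with $U$, which is Zariski open by hypothesis.

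For the inductive step I would assume that the set $U_{k-1} := \{b \in U : L_j(V, R_b)\ \text{is distributive for}\ 3 \leq j \leq k-1\}$ is known to be Zariski open. On $U_{k-1}$ the distributivity of the lattices $L_j$ for $j \leq k-1$, coupled with the constancy of $\dim R_b = n^2 - \dim A_2^b$ on $U$, forces $\dim A_j^b$ to be a constant $h_j$ for each $j \leq k-1$ (dimensions of lattice-words in a distributive lattice of subspaces are determined combinatorially by the dimensions of the generators). Next I would invoke the criterion, contained in \cite[Chapter~3]{popo}, that under distributivity of all lower $L_j$ the distributivity of $L_k(V, R_b)$ is equivalent to the single equality $\dim A_k^b = d_k$, where $d_k$ is the unique constant determined on $U_{k-1}$ by the Koszul-type identity applied to the already-known coefficients $h_2, \dots, h_{k-1}$. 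Finally, since $\dim A_k^b = n^k - \dim \sum_{i=0}^{k-2} V^i R_b V^{k-2-i}$, and the summand dimension is lower-semicontinuous in $b$ (being the rank of an explicit polynomial family of linear maps), $\dim A_k^b$ is upper-semicontinuous. The value $d_k$ is necessarily the minimum of $\dim A_k^b$ on $U_{k-1}$, attained precisely when $L_k$ is distributive, so $U_k = \{b \in U_{k-1} : \dim A_k^b \leq d_k\}$ is Zariski open, completing the induction.

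The main obstacle is establishing the reduction step: that, given distributivity of $L_j$ for $j < k$, distributivity of $L_k$ collapses to the single numerical condition $\dim A_k^b = d_k$. A priori, distributivity of $L_k$ requires dimension equalities for many lattice-words in the $k-1$ generators $V^i R_b V^{k-2-i}$ of $L_k$; the task is to argue that all of these equalities follow automatically from the lower-degree distributivity together with the top-level equation on $\dim A_k^b$. This is a structural statement about how distributivity propagates upward through the lattices $L_j$, and it is the content of the parametric version of the characterization of Koszulity via distributive lattices developed in \cite[Chapter~3]{popo}. Once this reduction is in place, the semicontinuity argument above converts it directly into Zariski openness.
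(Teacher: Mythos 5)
The paper itself does not prove Lemma~\ref{dri}: it is quoted as Drinfeld's theorem \cite{dr} (Theorem~2.1 of Chapter~6 in \cite{popo}), with only the remark that the proof blends the semicontinuity argument of Lemma~\ref{minhs} with an inductive procedure. Your proposal has the right outer shape --- induction on $k$, upper semicontinuity of $b\mapsto\dim A_k^b$, openness of its sublevel sets --- and your base case is correct, since $L_3(V,R_b)$ is generated by the two subspaces $R_bV$ and $VR_b$ and any lattice generated by two elements is distributive. But the inductive step contains two genuine gaps. First, the parenthetical claim that dimensions of lattice-words in a distributive lattice are determined combinatorially by the dimensions of the generators is false: two distinct lines in a plane and two coincident lines both generate distributive lattices with identical generator dimensions but different intersections. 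Consequently your deduction that $\dim A_j^b$ is constant on $U_{k-1}$ for $j\le k-1$ is unjustified; tellingly, your argument never uses the hypothesis that $\dim A_3^b$ is constant on $U$, which is essential (it is what fixes $\dim(R_bV\cap VR_b)$ and seeds the induction). A correct route would use the Euler-characteristic identities $\sum_{i=0}^j(-1)^i\dim A_i^{!,b}\,\dim A_{j-i}^b=0$ valid once $L_3,\dots,L_j$ are distributive, together with upper semicontinuity of both $\dim A_j^b$ and $\dim A_j^{!,b}$ and irreducibility of $\K^m$ --- and even then the odd degrees, where the identity controls only the difference of the two quantities, need extra care.

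Second, and more seriously, the pivot of your induction --- that once $L_j$ is distributive for $j<k$, distributivity of $L_k$ is equivalent to a single scalar equality $\dim A_k^b=d_k$ with $d_k$ computable from $h_2,\dots,h_{k-1}$ --- is precisely the hard content of the lemma, and you do not prove it; you attribute it to \cite[Chapter~3]{popo}, but no such statement can be quoted from there. It is not even well posed as written: the degree-$k$ Euler identity involves both $\dim A_k$ and $\dim A_k^!$, so it does not determine a number $d_k$ from the lower coefficients alone. Nor is the underlying principle ``extremal top dimension plus lower distributivity implies distributivity of $L_k$'' true for general configurations of subspaces: three pairwise distinct lines in a plane have all pairwise intersections zero, yet $(X_1+X_2)\cap X_3\neq X_1\cap X_3+X_2\cap X_3$. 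The analogous reductions actually used in this paper (Proposition~\ref{deg3}, Corollary~\ref{deg3a}) require much stronger structural hypotheses than a single dimension count. The standard proof of Drinfeld's theorem replaces this step by characterizing distributivity through exactness of auxiliary complexes whose terms have inductively constant dimensions, exactness then being an open condition. You correctly identify this reduction as the main obstacle, but deferring it to a citation that does not contain it leaves the proof incomplete.
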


The proof of the above lemma is rather classical. It is a blend of the same argument as in the proof of Lemma~\ref{minhs} with an appropriate inductive procedure. Chiefly, we need the following corollary of Lemmas~\ref{minhs} and~\ref{dri}. Recall that if $\K$ is uncountable, then we say that a {\it generic} $s\in \K^m$ has a property $P$ if $P$ is satisfied for all $s\in\K^m$ outside a union of countably many algebraic varieties (different from whole $\K^n$).

\begin{corollary}\label{hsdri} Assume that $\K$ be uncountable and $(\ref{alvar})$ is satisfied and $h_k=\min\limits_{b\in\K^m} \dim A_k^b$ for $k\in\Z_+$.
Then for generic $b\in\K^m$, $H_{A^b}(t)=\sum\limits_{k=0}^\infty h_kt^k$. Furthermore, exactly one of the following statements holds true$:$
\begin{itemize}\itemsep=-2pt
\item[\rm (\ref{hsdri}.1)] $A^b$ is non-Koszul for every $b\in\K^m$ satisfying $\dim A^b_3=h_3$ and $\dim A^b_2=h_2;$
\item[\rm (\ref{hsdri}.2)] $A^b$ is Koszul for generic $b\in\K^m.$
\end{itemize}
\end{corollary}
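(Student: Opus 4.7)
The plan is to combine Lemma~\ref{minhs} and Lemma~\ref{dri} via a standard countable-intersection argument that exploits the uncountability of $\K$.

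First, for the Hilbert series claim, I would apply Lemma~\ref{minhs} degree by degree. For each $k \in \Z_+$ the set $U_k = \{b \in \K^m : \dim A_k^b = h_k\}$ is a non-empty Zariski open subset of $\K^m$, so its complement is a proper algebraic subvariety. The union $\bigcup_{k} (\K^m \setminus U_k)$ is therefore a countable union of proper varieties, so $\bigcap_k U_k$ is generic. On this intersection every graded dimension attains its minimum, giving $H_{A^b}(t) = \sum_{k=0}^\infty h_k t^k$.

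For the dichotomy, set $U = U_2 \cap U_3$, which is non-empty Zariski open, and on which $\dim A_2^b$ and $\dim A_3^b$ are constant (equal to $h_2$ and $h_3$). The hypotheses of Lemma~\ref{dri} are satisfied, so for each $k \geq 3$ the set $V_k = \{b \in U : L_j(V, R_b) \text{ is distributive for } 3 \leq j \leq k\}$ is Zariski open in $\K^m$. Now I split on whether some $b_0 \in U$ yields a Koszul algebra $A^{b_0}$. If no such $b_0$ exists, then (\ref{hsdri}.1) holds directly, since the set of $b$ with $\dim A_2^b = h_2$ and $\dim A_3^b = h_3$ is exactly $U$. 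If some $b_0$ exists, then by the distributivity characterization of Koszulity recalled before Lemma~\ref{dri}, $b_0$ lies in every $V_k$, so each $V_k$ is non-empty Zariski open. The same countable-intersection argument then makes $\bigcap_{k \geq 3} V_k$ generic, and on this set all lattices $L_j(V, R_b)$ are distributive, so $A^b$ is Koszul, yielding (\ref{hsdri}.2).

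It remains to observe that the two alternatives are mutually exclusive: were (\ref{hsdri}.1) and (\ref{hsdri}.2) to hold simultaneously, the generic Koszul locus would necessarily meet the non-empty Zariski open set $U$ (once more using that a countable union of proper subvarieties cannot cover an uncountable $\K^m$), contradicting the failure of Koszulity on $U$. The only real ingredient is this elementary cardinality observation about the Zariski topology on $\K^m$; everything else is bookkeeping. I do not anticipate any substantial obstacle, so this is a corollary in the literal sense — the main care lies in choosing $U = U_2 \cap U_3$ so that the dimensional-constancy hypothesis of Lemma~\ref{dri} is genuinely satisfied before invoking it.
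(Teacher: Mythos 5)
Your proposal is correct and follows essentially the same route as the paper's proof: Lemma~\ref{minhs} applied degree by degree gives the generic Hilbert series, the set $U$ where $\dim A_2^b=h_2$ and $\dim A_3^b=h_3$ is used to invoke Lemma~\ref{dri}, and the dichotomy is obtained by testing whether some point of $U$ is Koszul and, if so, intersecting the countably many non-empty Zariski open distributivity loci. Your explicit justification of the mutual exclusivity (a generic set must meet the non-empty Zariski open set $U$) only spells out what the paper dismisses as obvious.
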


\begin{proof} By Lemma~\ref{minhs}, $H_{A^b}(t)=\sum\limits_{k=0}^\infty h_kt^k$ for $b$ from the intersection of countably many non-empty Zarissky open sets and therefore for a generic $b\in\K^m$. By Lemma~\ref{minhs},
$$
U=\{b\in\K^m:\dim A^b_3=h_3\ \text{and}\ \dim A^b_2=h_2\}
$$
is a non-empty Zarissky open subset of $\K^m$. If
$A^b$ is non-Koszul for every $b\in U$, (\ref{hsdri}.1) is satisfied. Assume now that (\ref{hsdri}.1) fails. Then there is $c\in U$ for which $A^c$ is Koszul. By Lemma~\ref{dri}, $W_k=\{b\in U:L_j(V,R_b)\ \ \text{for $3\leq j\leq k$ are distributive}\}$ is Zarissky open in $\K^m$. Since $A^c$ is Koszul, $c\in W_k$ for every $k\geq 3$. Since for $b$ from the intersection of $W_k$ with $k\geq 3$, $A^b$ is Koszul and each $W_k$ is Zarissky open and non-empty, (\ref{hsdri}.2) is satisfied. Obviously, (\ref{hsdri}.1) and (\ref{hsdri}.2) are incompatible.
\end{proof}

\section{Elementary observations \label{observ}}

Obviously, multiplying $(p,q,r)\in\K^3$ by a non-zero scalar does not change the algebra $Q^{p,q,r}$. It turns out that there are non-proportional triples of parameters, which lead to isomorphic (as graded algebras) Sklyanin algebras.

\subsection{Some isomorphisms of Sklyanin algebras}

\begin{lemma}\label{swap}
For every $(p,q,r)\in\K^3$, the graded algebras $Q^{p,q,r}$ and $Q^{q,p,r}$ are isomorphic.
\end{lemma}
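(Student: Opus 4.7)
The plan is to exhibit an explicit graded algebra isomorphism by permuting the generators. Since the three relations of $Q^{p,q,r}$ are obtained cyclically from the template $pab + qba + rcc = 0$, the coefficients $p$ and $q$ sit in front of two monomials that differ only by the order of the two non-square letters. Any permutation of $\{x,y,z\}$ of odd parity will therefore swap these two monomials in each relation and thus swap $p$ and $q$, while an even permutation will preserve them. The natural candidate is the transposition $y\leftrightarrow z$, fixing $x$.

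First, I would define the $\K$-linear map $\phi:V\to V$ on the generator space by $\phi(x)=x$, $\phi(y)=z$, $\phi(z)=y$, and extend it to an algebra endomorphism of the free algebra $F(V)$. Next, I would apply $\phi$ to each of the three defining relations of $Q^{p,q,r}$ and check that the image is, up to reordering the two summands, a defining relation of $Q^{q,p,r}$. Explicitly, one verifies
\begin{align*}
\phi(pyz+qzy+rxx) &= pzy+qyz+rxx = qyz+pzy+rxx,\\
\phi(pzx+qxz+ryy) &= pyx+qxy+rzz = qxy+pyx+rzz,\\
\phi(pxy+qyx+rzz) &= pxz+qzx+ryy = qzx+pxz+ryy.
\end{align*}
Each right-hand side is one of the three defining relations of $Q^{q,p,r}$, so $\phi$ sends the ideal of relations of $Q^{p,q,r}$ into the ideal of relations of $Q^{q,p,r}$.

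Therefore $\phi$ descends to a well-defined graded $\K$-algebra homomorphism $\overline\phi:Q^{p,q,r}\to Q^{q,p,r}$. Running the same construction with the roles of $(p,q,r)$ and $(q,p,r)$ interchanged produces a two-sided inverse (since $\phi\circ\phi=\mathrm{id}$ on generators), so $\overline\phi$ is an isomorphism of graded algebras.

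There is no genuine obstacle: the whole argument is a brief verification on three relations. The only thing that requires a moment of thought is choosing the correct permutation—an even cyclic permutation would fix $(p,q,r)$ and yield the (already obvious) automorphism of $Q^{p,q,r}$, so one must use an odd permutation such as the transposition above to obtain the $p\leftrightarrow q$ swap.
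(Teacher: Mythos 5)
Your proof is correct and follows exactly the paper's approach: the paper's one-line proof is "swapping two of the variables, while leaving the third one as is, provides an isomorphism between $Q^{p,q,r}$ and $Q^{q,p,r}$," and your argument simply spells out this verification (with the transposition $y\leftrightarrow z$) in full detail, including the correct matching of the three relations.
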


\begin{proof} Swapping two of the variables, while leaving the third one as is, provides an isomorphism between $Q^{p,q,r}$ and $Q^{q,p,r}$.
\end{proof}

\begin{lemma}\label{root1} Assume that $(p,q,r)\in\K^3$ and $\theta\in\K$ is such that $\theta^3=1$ and $\theta\neq 1$. Then the graded algebras $Q^{p,q,r}$ and $Q^{p,q,\theta r}$ are isomorphic.
\end{lemma}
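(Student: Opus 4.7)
The plan is to exhibit an explicit graded isomorphism by rescaling the generators. Since we want a map $Q^{p,q,r}\to Q^{p,q,\theta r}$ preserving the degree grading and the triple-symmetry of the relations, the natural candidate is a diagonal transformation $\phi(x)=\alpha x$, $\phi(y)=\beta y$, $\phi(z)=\gamma z$ for appropriately chosen nonzero scalars $\alpha,\beta,\gamma\in\K$. Such a $\phi$ extends uniquely to an endomorphism of the free algebra $F=\K\langle x,y,z\rangle$, and it induces the desired isomorphism of Sklyanin algebras provided it sends each defining relation of $Q^{p,q,r}$ into the ideal of relations of $Q^{p,q,\theta r}$ (and is bijective, which is automatic as soon as $\alpha,\beta,\gamma\neq 0$).

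Applying $\phi$ to the three relations of $Q^{p,q,r}$ yields
\begin{align*}
\phi(pyz+qzy+rxx)&=\beta\gamma(pyz+qzy)+\alpha^2 r\,xx,\\
\phi(pzx+qxz+ryy)&=\gamma\alpha(pzx+qxz)+\beta^2 r\,yy,\\
\phi(pxy+qyx+rzz)&=\alpha\beta(pxy+qyx)+\gamma^2 r\,zz.
\end{align*}
For each of these to be a scalar multiple of the corresponding relation of $Q^{p,q,\theta r}$, it suffices to impose
$$
\frac{\alpha^2}{\beta\gamma}=\frac{\beta^2}{\gamma\alpha}=\frac{\gamma^2}{\alpha\beta}=\theta.
$$

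The key step is to find $\alpha,\beta,\gamma$ realizing these three equalities simultaneously. Writing $\alpha=1$, $\beta=\theta^a$, $\gamma=\theta^b$, the system collapses (using $\theta^3=1$) to the single congruence $a+b\equiv 2\pmod 3$; in particular the choice $a=b=1$ works. Thus setting $\alpha=1$, $\beta=\gamma=\theta$ and using $\theta^3=1$ gives
$$
\phi(pyz+qzy+rxx)=\theta^2(pyz+qzy+\theta r\,xx),
$$
and analogous computations (again relying on $\theta^3=1$) show
$$
\phi(pzx+qxz+ryy)=\theta(pzx+qxz+\theta r\,yy),\quad \phi(pxy+qyx+rzz)=\theta(pxy+qyx+\theta r\,zz).
$$
Hence $\phi$ descends to a well-defined graded algebra homomorphism $Q^{p,q,r}\to Q^{p,q,\theta r}$; since $\theta\neq 0$, the diagonal map $\phi$ is invertible on generators (its inverse is $x\mapsto x$, $y\mapsto\theta^{-1}y$, $z\mapsto\theta^{-1}z$, which by the same argument descends to $Q^{p,q,\theta r}\to Q^{p,q,r}$), completing the proof.

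There is essentially no obstacle: the main (and only) content is noticing that the symmetric cyclic shape of the relations forces the scaling system to be governed by a single congruence modulo $3$, which is solvable precisely because $\theta$ is a cube root of unity. The hypothesis $\theta\neq 1$ is not needed for the construction — it only ensures the statement is non-vacuous, as $\theta=1$ gives the tautological isomorphism.
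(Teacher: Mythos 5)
Your proof is correct and follows essentially the same route as the paper: the paper also uses a diagonal rescaling of the generators by powers of $\theta$ (it takes $x=u$, $y=v$, $z=\theta^2w$, which is the solution $(a,b)=(0,2)$ of your congruence $a+b\equiv 2\pmod 3$, while you take $(a,b)=(1,1)$). The two choices differ only by which admissible scaling is picked, so there is nothing further to compare.
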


\begin{proof} The relations of $Q^{p,q,r}$ in the variables $u$, $v$, $w$ given by $x=u$, $y=v$ and $z=\theta^2w$ read
$puv+qvu+\theta rww=0$, $pwu+quw+\theta rvv=0$ and $pvw+qwv+\theta ruu=0$. Thus this change of variables provides an isomorphism between $Q^{p,q,r}$ and $Q^{p,q,\theta r}$.
\end{proof}

\begin{lemma}\label{root2} Assume that $(p,q,r)\in\K^3$ and $\theta\in\K$ is such that $\theta^3=1$ and $\theta\neq 1$. Then the graded algebras $Q^{p,q,r}$ and $Q^{p',q',r'}$ are isomorphic, where $p'=\theta^2p+\theta q+r$, $q'=\theta p+\theta^2q+r$ and $r'=p+q+r$.
\end{lemma}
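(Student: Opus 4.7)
The plan is to produce an explicit change of variables implementing the isomorphism; the natural choice is a discrete-Fourier-type substitution associated to the cyclic group $\Z/3$, since the three Sklyanin relations are permuted cyclically by $x\to y\to z\to x$ and $\theta$ supplies the required nontrivial character. Concretely, I would define the graded $\K$-algebra homomorphism $\phi:\K\langle x,y,z\rangle\to\K\langle u,v,w\rangle$ by
\[
\phi(x)=u+v+w,\qquad \phi(y)=u+\theta v+\theta^2 w,\qquad \phi(z)=u+\theta^2 v+\theta w.
\]
The matrix of $\phi$ on degree $1$ is the Vandermonde in $1,\theta,\theta^2$, whose determinant is a nonzero multiple of $(\theta-1)(\theta^2-1)(\theta-\theta^2)$; this is non-vanishing because $\theta^3=1$, $\theta\ne 1$ already forces $\mathrm{char}\,\K\ne 3$ and $\theta\ne\theta^2$. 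Hence $\phi$ is an isomorphism of free graded algebras, and it suffices to check that it sends the $3$-dimensional space of Sklyanin relations for $(p,q,r)$ onto that for $(p',q',r')$.

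Let $R_1=pyz+qzy+rxx$, $R_2=pzx+qxz+ryy$, $R_3=pxy+qyx+rzz$ be the generators of the ideal of $Q^{p,q,r}$. The key structural observation is that the cyclic permutation $x\to y\to z\to x$ corresponds, under $\phi$, to the diagonal action $u\mapsto u$, $v\mapsto\theta v$, $w\mapsto\theta^2 w$ on $\K\langle u,v,w\rangle$. Under this $\Z/3$-action the three combinations
\[
\Sigma_1=\phi(R_1)+\phi(R_2)+\phi(R_3),\quad
\Sigma_\theta=\phi(R_1)+\theta\phi(R_2)+\theta^2\phi(R_3),\quad
\Sigma_{\theta^2}=\phi(R_1)+\theta^2\phi(R_2)+\theta\phi(R_3)
\]
are eigenvectors with eigenvalues $1,\theta^2,\theta$ respectively, so each is supported on the corresponding degree-$2$ eigenspace: $\spann\{uu,vw,wv\}$, $\spann\{vv,uw,wu\}$, and $\spann\{ww,uv,vu\}$. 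This is precisely the shape required of the three defining relations of $Q^{p',q',r'}$ in the variables $u,v,w$.

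It remains to pin down the coefficients. A direct expansion of $\Sigma_1$, collapsing the nine-term sums via $1+\theta+\theta^2=0$, gives
\[
\Sigma_1 = 3\bigl(r'uu+p'vw+q'wv\bigr),
\]
with exactly $p'=\theta^2 p+\theta q+r$, $q'=\theta p+\theta^2 q+r$, $r'=p+q+r$. Applying the $\Z/3$-symmetry on the $u,v,w$ side (or repeating the identical expansion) yields
\[
\Sigma_\theta = 3\bigl(r'vv+p'wu+q'uw\bigr),\qquad
\Sigma_{\theta^2}= 3\bigl(r'ww+p'uv+q'vu\bigr),
\]
which are three times the other two Sklyanin relations for $(p',q',r')$. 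Since the transition matrix between $(\phi(R_1),\phi(R_2),\phi(R_3))$ and $(\Sigma_1,\Sigma_\theta,\Sigma_{\theta^2})$ is the Vandermonde in $1,\theta,\theta^2$, it is invertible, so $\phi$ sends the space of relations of $Q^{p,q,r}$ bijectively onto the space of relations of $Q^{p',q',r'}$, inducing the desired isomorphism of graded algebras.

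The main obstacle is simply the bookkeeping in expanding $\Sigma_1$: each of $xx,yy,zz,xy,yz,\ldots$ produces nine monomials in $u,v,w$, and one must repeatedly apply $1+\theta+\theta^2=0$ and $\theta^3=1$ to see that the six off-eigenspace monomials drop out. The cyclic-symmetry argument ensures this cancellation is forced, so only one of the three symmetric combinations needs to be computed in full.
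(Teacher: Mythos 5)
Your proof is correct and uses exactly the same change of variables $x=u+v+w$, $y=u+\theta v+\theta^2 w$, $z=u+\theta^2 v+\theta w$ as the paper, which simply states the result of the "direct computation"; your $\Z/3$-eigenspace organization of that computation is a nice way to see why the off-diagonal monomials cancel, but the argument is essentially identical.
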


\begin{proof} A direct computation shows that the space of the quadratic relations of $Q^{p,q,r}$ in the variables $u$, $v$, $w$ given by $x=u+v+w$, $y=u+\theta v+\theta^2 w$ and $z=u+\theta^2v+\theta w$ (the matrix of this change of variables is non-degenerate) is spanned by $p'uv+q'vu+r'ww=0$, $p'wu+q'uw+r'vv=0$ and $p'vw+q'wv+r'uu=0$. Thus $Q^{p,q,r}$ and $Q^{p',q',r'}$ are isomorphic.
\end{proof}

\subsection{Easy degenerate cases}

First, if $p=q=r=0$, then $Q^{p,q,r}$ is the free algebra and therefore $A=Q^{p,q,r}$ is PBW and therefore Koszul and has the Hilbert series $H_A(t)=(1-3t)^{-1}$. If exactly two of $p$, $q$ and $r$ are $0$, then $A$ is monomial and therefore is PBW and therefore Koszul. One easily verifies that in this case $H_A(t)=\frac{1+t}{1-2t}$. If $p^3=q^3=r^3\neq 0$, one easily checks that the defining relations of $A$ form a Gr\"obner basis in the ideal they generate. Hence $A$ is PBW and therefore Koszul. Furthermore, the Hilbert series of $A$ is the same as for the monomial algebra given by the leading monomials $xx$, $xy$ and $xz$  of the relations of $A$. It follows that again $H_A(t)=\frac{1+t}{1-2t}$. If $r=0$ and $pq\neq 0$, Lemma~\ref{pbbw} yields that $A$ is PBW (and therefore Koszul) PHS. The latter means that $H_A=(1-t)^{-3}$. As a matter of fact, $A$ in this case is the algebra of quantum polynomials. These observations are summarised in the following lemma.

\begin{lemma}\label{dege1} The Sklyanin algebra $A=Q^{p,q,r}$ is PBW and therefore is Koszul if $r=0$, or if $p=q=0$, or if $p^3=q^3=r^3$. Moreover, $H_A(t)=(1-3t)^{-1}$ if $p=q=r=0$, $H_A(t)=\frac{1+t}{1-2t}$ if exactly two of $p$, $q$ and $r$ are $0$ or if $p^3=q^3=r^3\neq 0$ and $H_A=(1-t)^{-3}$ if $r=0$ and $pq\neq 0$.
\end{lemma}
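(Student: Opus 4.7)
\emph{Plan.} The lemma lists four disjoint configurations of parameters, and for each one I would verify directly that some natural set of quadratic generators is a Gr\"obner basis of $I_A$, deduce the PBW property (hence Koszulity), and read off the Hilbert series from the leading monomials. Throughout I would use the degree--lexicographic order on words in $x,y,z$ with $x>y>z$.

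In the trivial case $p=q=r=0$ there are no relations and $A$ is free, immediately giving $H_A(t)=(1-3t)^{-1}$. In each of the three sub-cases with exactly two parameters zero, the three relations reduce, after scaling, to three pure monomials, so $A$ is a monomial algebra and hence automatically PBW and Koszul. A direct count shows that in every sub-case each letter admits exactly two allowed successors in a normal word, so $\dim A_n=3\cdot 2^{n-1}$ for $n\geq 1$, which gives $H_A(t)=1+\frac{3t}{1-2t}=\frac{1+t}{1-2t}$.

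For $r=0$, $pq\neq 0$ the defining relations are those of a quantum affine space. Their leading monomials are $\{xy,yz,xz\}$, which is the first triple in $(\ref{lemo})$ of Lemma~\ref{pbbw}. Any monomial can be pushed into the sorted form $z^ay^bx^c$ using the relations, which shows $\dim A_n=\binom{n+2}{2}$ and in particular $\dim A_3=10$. Lemma~\ref{pbbw}.1 then yields PBW, Koszulity and PHS, so $H_A(t)=(1-t)^{-3}$.

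The main work is the case $p^3=q^3=r^3\neq 0$. Here the leading monomials of the three defining relations are $xy$, $xx$ and $xz$, and the only overlaps to check are $xxx$, $xxy$ and $xxz$. The corresponding S-polynomial reductions involve $p,q,r$ in cubic combinations, and a short but non-trivial computation shows that each one collapses to $0$ precisely under the hypothesis $p^3=q^3=r^3$. This is the only step that requires an actual algebraic identity, and it is the main obstacle of the proof. Once it is done, the three relations form a Gr\"obner basis, so $A$ is PBW and hence Koszul, and $H_A$ coincides with the Hilbert series of the monomial algebra with forbidden submonomials $\{xx,xy,xz\}$. The normal words are exactly those in which $x$ occurs only as the last letter, giving $\dim A_n=2^n+2^{n-1}=3\cdot 2^{n-1}$ for $n\geq 1$ and hence $H_A(t)=\frac{1+t}{1-2t}$, completing the proof.
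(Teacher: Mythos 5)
Your argument is correct and coincides with the paper's own treatment of this lemma: the free and monomial cases are read off directly, the case $r=0$, $pq\neq 0$ is the quantum polynomial algebra handled via Lemma~\ref{pbbw}, and in the case $p^3=q^3=r^3\neq 0$ one checks that the defining relations themselves form a Gr\"obner basis (leading monomials $xx$, $xy$, $xz$, with the overlaps $xxx$, $xxy$, $xxz$ resolving precisely under $p^3=q^3=r^3$), after which both Hilbert series computations from the leading monomials are exactly as in the paper. The one point to tighten is in the quantum-polynomial case: rewriting every monomial into the sorted form $z^ay^bx^c$ only gives the upper bound $\dim A_3\leq 10$, so before invoking Lemma~\ref{pbbw}.1 you should resolve the single overlap $xyz$ (a one-line check, valid for all $pq\neq0$) to conclude that the sorted monomials are linearly independent and hence $\dim A_3=10$ --- which of course already exhibits the Gr\"obner basis directly.
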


\subsection{The Hilbert series of the dual algebra}

\begin{lemma} \label{duser} Let $(p,q,r)\in\K$ and $A=Q^{p,q,r}$. Then the Hilbert series of $A^!$ is given by
\begin{equation}\label{hsaa}
H_{A^!}(t)=\left\{\begin{array}{ll}1+3t&\text{if $p=q=r=0$;}
\\ \textstyle\frac{1+2t}{1-t}&\text{if $p^3=q^3=r^3\neq 0$ or exactly two of $p$, $q$ and $r$ equal $0$;}\\ (1+t)^{3}&\text{otherwise}.
\end{array}\right.
\end{equation}
Moreover, $wA_2^!\neq\{0\}$ for each non-zero $w\in A_1^!$ provided $H_{A^!}(t)=(1+t)^{3}$.
\end{lemma}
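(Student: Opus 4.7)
My plan is to compute $\dim A^!_k$ stratum-by-stratum in the parameter space $\K^3$ of triples $(p,q,r)$, combining the block structure of $V\otimes V$ with explicit Gr\"obner-basis computations in the harder cases. The key preliminary observation is that the three defining relations $r_1 = pyz+qzy+rxx$, $r_2 = pzx+qxz+ryy$, $r_3 = pxy+qyx+rzz$ of $Q^{p,q,r}$ lie in three mutually disjoint 3-dimensional subspaces $W_1 = \spann(xx,yz,zy)$, $W_2 = \spann(yy,zx,xz)$, $W_3 = \spann(zz,xy,yx)$ of $V\otimes V$, so both $R$ and $R^\perp = (R^\perp\cap W_1)\oplus (R^\perp\cap W_2)\oplus (R^\perp\cap W_3)$ respect this direct-sum decomposition. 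Consequently $\dim A^!_2 = \dim R$ is immediate in each stratum.

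First I would dispatch the easy strata. For $(p,q,r) = 0$ one has $R^\perp = V\otimes V$, hence $A^!_{\ge 2} = 0$ and $H_{A^!}(t) = 1 + 3t$. For exactly two of $p,q,r$ zero (say $p = q = 0$, $r\neq 0$), $R^\perp$ is spanned by the six off-diagonal monomials, making $A^!$ a monomial algebra with normal monomials $x^k, y^k, z^k$ in each degree $k\ge 1$; this yields $H_{A^!}(t) = (1+2t)/(1-t)$, and the other two sub-cases are identical by symmetry. When $p^3 = q^3 = r^3 \neq 0$, Lemma~\ref{dege1} supplies that $A$ is PBW (hence Koszul) with $H_A(t) = (1+t)/(1-2t)$; applying the Koszul-duality identity $H_A(-t)H_{A^!}(t)=1$ recovers the same $(1+2t)/(1-t)$.

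For the remaining ``otherwise'' stratum -- $(p,q,r) \neq 0$, not $p^3 = q^3 = r^3$, not a two-zero -- I would carry out an explicit Gr\"obner-basis calculation for $A^!$ under deglex with $x<y<z$. A convenient basis of $R^\perp$ can be chosen so that its six elements have distinct leading monomials partitioning the nine degree-2 monomials into six leadings and three normals; in the generic sub-case $pqr\neq 0$ one gets leadings $\{yy,yx,yz,zx,zy,zz\}$ with normals $\{xx,xy,xz\}$, while in e.g.\ the sub-case $p=0$, $qr\neq 0$ one gets leadings $\{yz,zx,xy,zy,yy,zz\}$ with normals $\{xx,xz,yx\}$; the remaining one-zero sub-cases follow by Lemma~\ref{swap} or are already covered by Lemma~\ref{dege1}. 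This gives $\dim A^!_2 = 3$. The central computation is then the reduction of the roughly twelve critical degree-3 overlaps $abc$ (with $ab$ and $bc$ both leading) along the two possible rewrite orders: in the generic sub-case, each non-trivial overlap produces a consistency relation of the form $(p^3 - q^3)\cdot xxy = 0$, $(q^3 - r^3)\cdot xxz = 0$, $(p^3 - r^3)\cdot xxy = 0$, and similar, and the hypothesis that $\{p^3, q^3, r^3\}$ are not all equal then forces both $xxy = 0$ and $xxz = 0$; in the $p=0$ sub-case one analogously obtains $xxz = yxx = 0$ together with $yxz = (q/r)xxx$. Either way $\dim A^!_3 \le 1$, spanned by a single normal cube. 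A further overlap at degree four (e.g., the one from $xxz \cdot y$ versus $xx\cdot zy$) forces $xxxx = 0$, and since $A^!$ is generated in degree $1$, $A^!_k = \{0\}$ for $k\ge 4$. Non-vanishing of the surviving cube follows because no leading submonomial divides it in the resulting reduced Gr\"obner basis. We conclude $H_{A^!}(t) = 1 + 3t + 3t^2 + t^3 = (1+t)^3$.

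For the moreover clause, once the Gr\"obner basis is in hand and a normal-form basis $\{m_1, m_2, m_3\}$ of $A^!_2$ has been fixed, one verifies by direct reduction that for $w = \alpha x + \beta y + \gamma z \in A^!_1$, each product $w\cdot m_j$ reduces to a scalar multiple of the generator of $A^!_3$, with the three scalars being $\alpha, \beta, \gamma$ up to fixed non-zero rescalings; hence $w\neq 0$ implies $wA^!_2 \neq \{0\}$. The main obstacle is the S-polynomial bookkeeping in the otherwise stratum: while every single reduction is routine, one must verify that any failure of $p^3 = q^3 = r^3$ in fact produces both of the relations killing $xxy$ and $xxz$ in the generic sub-case (and the analogous pair of relations in the one-zero sub-cases), and check that no secondary overlap secretly forces the remaining cube to zero.
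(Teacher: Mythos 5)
Your proposal is correct and follows essentially the same route as the paper: the degenerate strata are dispatched via Lemma~\ref{dege1} and the Koszul identity $H_A(-t)H_{A^!}(t)=1$, the remaining strata by an explicit Gr\"obner basis for $A^!$ showing $\dim A^!_2=3$, $\dim A^!_3=1$, $A^!_4=0$, and the moreover clause by reducing $w\cdot m_j$ to scalar multiples of the surviving cube. The only differences are cosmetic (you keep symbolic $(p,q,r)$ and the order $x<y<z$ where the paper first normalizes to $S^{a,s}$), and the S-polynomial bookkeeping you flag as the remaining obstacle is precisely the ``direct computation'' the paper asserts; spot-checking confirms the overlaps do produce relations of the form $(p^3-q^3)\,xxz=0$, $(r^3-q^3)\,xxy=0$, etc., which collectively kill $xxy$ and $xxz$ exactly when $p^3=q^3=r^3$ fails.
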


\begin{proof} If $p=q=r=0$, the result is trivial. If $p^3=q^3=r^3\neq 0$ or exactly two of $p$, $q$ and $r$ equal $0$, Lemma~\ref{dege1} yields that $A$ is Koszul and $H_A(t)=\frac{1+t}{1-2t}$. By (\ref{stm2}), $H_{A^!}(t)=\frac{1+2t}{1-t}$.
If $r=0$ and $pq\neq 0$, then Lemma~\ref{dege1} yields that $A$ is Koszul and $H_A(t)=(1-t)^{-3}$. By (\ref{stm2}), $H_{A^!}(t)=(1+t)^3$. Thus (\ref{hsaa}) holds if $r=0$ or $p^3=q^3=r^3$ or $p=q=0$.

Now consider the case $r\neq 0$, $(p,q)\neq (0,0)$ and $pq=0$. By Lemma~\ref{swap}, $A$ is isomorphic to $S^{0,s}$ for some $s\neq 0$. The defining relations of $A^!$ in this case can be written as $yx=0$, $xz=0$, $zy=0$, $xy=-\frac1szz$, $yy=-szx$ and $xx=-syz$. Applying the non-commutative Buchberger algorithm, we get that the (finite) Gr\"obner basis of the ideal $I_{A^!}$ of the relations of $A^!$ is
\begin{align*}
&\textstyle yx,\quad xz,\quad zy, \quad xy+\frac1szz,\quad yy+szx,\quad xx+syz,
\\
&\textstyle yzx+\frac1szzz,\quad zzx,\quad yzz,\quad zzzx,\quad yzzz,\quad zzzz.
\end{align*}
Then the only normal words are $x$, $y$, $z$, $zx$, $yz$, $zz$ and $zzz$ and therefore $H_{A^!}(t)=1+3t+3t^2+t^3=(1+t)^3$, which proves (\ref{hsaa}) in the case $r\neq 0$, $(p,q)\neq (0,0)$ and $pq=0$.

Thus it remains to consider the case when $pqr\neq 0$ and $p^3=q^3=r^3$ fails. In this case $A$ is isomorphic to $S^{a,s}$
with $as\neq 0$ and $(a^3,s^3)\neq(-1,-1)$. The defining relations of $A^!$ then can be written as $xx=\frac sazy$, $xy=-\frac1szz$, $yx=\frac aszz$, $yy=-szx$, $xz=-azx$ and $yz=-\frac1azy$. A direct computation shows that
\begin{align*}
&\textstyle xx-\frac sazy,\quad xy+\frac1szz,\quad yx-\frac aszz,\quad yy+szx,
\\
&\textstyle xz+azx,\quad yz+\frac1azy,\quad zzy,\quad zzx,\quad zzzz
\end{align*}
is a Gr\"obner basis of $I_{A^!}$. The only normal words are $x$, $y$, $z$, $zx$, $zy$, $zz$ and $zzz$. Again, we have $H_{A^!}(t)=1+3t+3t^2+t^3=(1+t)^3$, which completes the proof of (\ref{hsaa}).

Assume now that $H_{A^!}(t)=(1+t)^3$ and $w=\alpha x+\beta y+\gamma z$ be a non-zero element of $A_1^!=V$. It remains to show that $wA_2^!\neq\{0\}$. If $r=0$, (\ref{hsaa}) yields $pq\neq 0$. Then $A=S^{a,0}$ with $a\neq 0$. It is easy to see that the one-dimensional space $A_3^!$ is spanned by $yzx=zxy=xyz$ and that every monomial with at least two copies of the same letter vanishes in $A^!$. Then for $g=\alpha_1yz+\beta_1zx+\gamma_1xy$ with $\alpha_1,\beta_1,\gamma_1\in \K$, we have $wg=(\alpha\alpha_1+\beta\beta_1+\gamma\gamma_1)yzx$. Since $(\alpha,\beta,\gamma)\neq (0,0,0)$, it follows that $wA_2^!\neq\{0\}$. If $r\neq 0$, from the above description of the Gr\"obner basis of $I_{A^!}$ it follows that the one-dimensional space $A_3^!$ is spanned by $xxx=yyy=zzz$ and that every monomial of degree $3$ with exactly  two copies of the same letter (like $xxy$ or $zyz$) vanishes in $A^!$. Then for $g=\alpha_1xx+\beta_1yy+\gamma_1zz$ with $\alpha_1,\beta_1,\gamma_1\in \K$, we have $wg=(\alpha\alpha_1+\beta\beta_1+\gamma\gamma_1)zzz$. Since $(\alpha,\beta,\gamma)\neq (0,0,0)$, it follows that $wA_2^!\neq\{0\}$.
\end{proof}

Note \cite{popo} that for every quadratic algebra $A=A(V,R)$ (Koszul or otherwise), the power series $H_A(t)H_{A^!}(-t)-1$ starts with $t^k$ with $k\geq 4$. This allows to determine $\dim A_3$ provided we know $\dim A^!_j$ for $j\leq 3$. Applying this observation together with (\ref{hsaa}), we immediately obtain the following fact.

\begin{corollary} \label{a3} Let $(p,q,r)\in\K$ and $A=Q^{p,q,r}$. Then
\begin{equation}\label{hsaa1}
\dim A_3=\left\{\begin{array}{ll}27&\text{if $p=q=r=0;$}
\\ 12&\text{if $p^3=q^3=r^3\neq 0$ or exactly two of $p$, $q$ and $r$ equal $0;$}\\ 10&\text{otherwise}.
\end{array}\right.
\end{equation}
\end{corollary}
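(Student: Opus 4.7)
The plan is to exploit precisely the hint stated immediately above Corollary~\ref{a3}: for any quadratic algebra, the power series $H_A(t)H_{A^!}(-t)-1$ vanishes to order at least $4$. Combined with Lemma~\ref{duser}, this will pin down $\dim A_3$ in each of the three cases.

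First I will extract the general identity. Write
$$
H_A(t)=1+3t+a_2t^2+a_3t^3+\cdots,\qquad H_{A^!}(t)=1+3t+d_2t^2+d_3t^3+\cdots.
$$
Since $H_A(t)H_{A^!}(-t)=1+O(t^4)$, equating the coefficients of $t^2$ and $t^3$ gives
$a_2=9-d_2$ and $a_3=3a_2-3d_2+d_3=27-6d_2+d_3$. So the value of $\dim A_3=a_3$ is determined by $d_2=\dim A_2^!$ and $d_3=\dim A_3^!$.

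Next I read off $d_2$ and $d_3$ from formula (\ref{hsaa}) in Lemma~\ref{duser}. In the free case $p=q=r=0$ we have $H_{A^!}(t)=1+3t$, so $d_2=d_3=0$ and $a_3=27$. In the cases where $H_{A^!}(t)=\frac{1+2t}{1-t}=1+3t+3t^2+3t^3+\cdots$ (that is, $p^3=q^3=r^3\neq 0$, or exactly two of $p,q,r$ vanish), we get $d_2=d_3=3$, hence $a_3=27-18+3=12$. In all remaining cases, $H_{A^!}(t)=(1+t)^3=1+3t+3t^2+t^3$, so $d_2=3$, $d_3=1$, and $a_3=27-18+1=10$. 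These three computations together give exactly (\ref{hsaa1}).

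There is really no obstacle here: the proof is a one-line bookkeeping exercise once Lemma~\ref{duser} and the universal vanishing-to-order-$4$ fact are in hand. The only point that requires any care is verifying that the universal coefficient identity actually forces $a_2$ and $a_3$ as claimed, which is just matching coefficients in the product of two formal power series.
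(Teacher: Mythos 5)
Your proof is correct and is exactly the argument the paper intends: it applies the universal fact that $H_A(t)H_{A^!}(-t)-1$ vanishes to order at least $4$ together with the formula for $H_{A^!}$ from Lemma~\ref{duser}, and your coefficient bookkeeping ($a_2=9-d_2$, $a_3=27-6d_2+d_3$) checks out in all three cases. No issues.
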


\subsection{Lower estimate for $H_{Q^{p,q,r}}$}

\begin{lemma} \label{loest}
For every $(p,q,r)\in\K$, $\dim Q^{p,q,r}_n\geq \frac{(n+1)(n+2)}{2}$ for every $n\in\Z_+$.
\end{lemma}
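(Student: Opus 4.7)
The plan is to combine the semicontinuity of Hilbert series (Lemma~\ref{minhs}) with the generic-Koszulity dichotomy of Corollary~\ref{hsdri}, the explicit dual series from Lemma~\ref{duser}, and the observation that the commutative polynomial algebra $\K[x,y,z]$ is itself a Sklyanin algebra. By Remark~\ref{rem1}, passing to a larger field of the same characteristic changes neither the Hilbert series nor Koszulity, so I shall assume $\K$ is uncountable so that Corollary~\ref{hsdri} applies to the parametric family $\{Q^{p,q,r}\}_{(p,q,r)\in\K^3}$. Set $h_n=\min_{(p,q,r)\in\K^3}\dim Q^{p,q,r}_n$; Lemma~\ref{minhs} already supplies $\dim Q^{p,q,r}_n\geq h_n$ for every parameter, so the task reduces to identifying $h_n$ with $\frac{(n+1)(n+2)}{2}$.

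The crucial input is that $\K[x,y,z]=Q^{1,-1,0}$: with $(p,q,r)=(1,-1,0)$ the three Sklyanin relations become exactly the commutators $yz-zy$, $zx-xz$, $xy-yx$. These form a Gr\"obner basis in the deg-lex order with $x>y>z$ (the unique overlap reduces routinely to zero), so $\K[x,y,z]$ is PBW and therefore Koszul by \eqref{stm2}. Moreover, $\dim\K[x,y,z]_2=6$ and $\dim\K[x,y,z]_3=10$ coincide with $h_2$ and $h_3$: the three Sklyanin relations are manifestly linearly independent whenever $(p,q,r)\neq 0$, forcing $\dim Q^{p,q,r}_2\geq 6$ always with equality at $(1,-1,0)$, while Corollary~\ref{a3} provides $h_3=10$. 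Hence $(1,-1,0)$ is a Koszul point of the minimization stratum $\{b:\dim A^b_2=h_2,\ \dim A^b_3=h_3\}$, which rules out (\ref{hsdri}.1) and forces (\ref{hsdri}.2): a Zarisski-generic $Q^{p,q,r}$ is Koszul.

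To conclude, Lemma~\ref{duser} asserts $H_{(Q^{p,q,r})^!}(t)=(1+t)^3$ outside a proper Zarisski-closed subset of $\K^3$. Intersecting this open condition with generic Koszulity and applying Koszul duality \eqref{stm2} yields
$$H_{Q^{p,q,r}}(t)=\frac{1}{(1-t)^3}=\sum_{n=0}^\infty\frac{(n+1)(n+2)}{2}\,t^n$$
for generic $(p,q,r)$, whence $h_n=\frac{(n+1)(n+2)}{2}$ by Corollary~\ref{hsdri}, and Lemma~\ref{minhs} then delivers the claimed lower bound at every parameter. The only real conceptual step is recognizing that a Koszul Sklyanin algebra with the expected small Hilbert series already sits at the explicit point $(1,-1,0)$ of the parameter space; after that, the semicontinuity and Koszul-duality machinery assembled in Section~\ref{s2} does all the work, and no further Gr\"obner basis computation is required.
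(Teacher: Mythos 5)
Your proposal is correct and follows essentially the same route as the paper's own proof: reduce to an uncountable field via Remark~\ref{rem1}, observe that $Q^{1,-1,0}=\K[x,y,z]$ is a Koszul point of the minimizing stratum so that Corollary~\ref{hsdri} forces generic Koszulity, and combine Lemma~\ref{duser} with \eqref{stm2} to identify the minimal series with $(1-t)^{-3}$. The only difference is that you spell out the justifications the paper labels ``clearly'' and ``obviously'' ($h_2=6$ and the PBW property of the commutator relations), which is harmless.
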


\begin{proof} By Remark~\ref{rem1}, we can without loss of generality assume that $\K$ is uncountable (just replace $\K$ by an uncountable field extension, if necessary). For each $n\in\Z_+$, let
$$
d_n=\min_{(a,b,c)\in\K^3}\dim Q^{a,b,c}_n.
$$
Clearly, $d_2=6$. By (\ref{hsaa1}), $d_3=10$. Obviously, $P=Q^{1,-1,0}=\K[x,y,z]$ is Koszul and $\dim P_2=6=d_2$, $\dim P_3=10=d_3$. By Corollary~\ref{hsdri} and Lemma~\ref{duser}, for generic $(a,b,c)\in\K^3$, $A=Q^{a,b,c}$ is Koszul and satisfies $H_{A}(t)=\sum\limits_{n=0}^\infty d_nt^n$ and $H_{A^!}(t)=(1+t)^3$. Now by (\ref{stm2}), $\sum\limits_{n=0}^\infty d_nt^n=(1-t)^{-3}$ and therefore $d_n=\frac{(n+1)(n+2)}{2}$ for every $n\in\Z_+$. Now the result follows from the definition of $d_n$.
\end{proof}

\section{Proof of Theorem~\ref{copo0} \label{s3}}

Throughout this section $p,q,r\in \K$ and $A=Q^{p,q,r}$. If $p^3=q^3=r^3$ or $p=q=0$ or $r=0$, the conclusion of Theorem~\ref{copo0} follows from Lemma~\ref{dege1}. We split our consideration into cases. First, we eliminate easier ones.

\subsection{Case $pq=0$, $(pr,qr)\neq(0,0)$}

By Lemma~\ref{swap}, we can without loss of generality assume that $p\neq 0$ and $q=0$. Since $r\neq 0$, $A=S^{0,s}$ for some $s\neq 0$. It turns out that in this case, the Gr\"obner basis of the ideal $I_A$ of the relations of $A$ is
$$
\begin{array}{l}
xx-\frac1syz,\ xy-szz,\ yy-\frac1szx,\ xzx-s^2zzy,\ xzz-\frac1{s^2}yzy,\\
yzx-szzz,\ xzyz-s^3zzyx,\ yzyz-s^2zzzx,\ yzzz-zzzy.
\end{array}
$$
None of the leading monomials of the members of this basis starts with $z$. It follows that the set of normal words is closed under multiplication by $z$ from the left. Hence $zu\neq 0$ for every non-zero $u\in A$ and therefore $A$ has no non-trivial right annihilators.

Since the set of leading monomials depends neither on $s$ nor on the underlying field $\K$, we have $H_A=H_B$, where $B=S^{0,1}=Q^{1,0,-1}$ is a  $\C$-algebra. Let $\theta=e^{2\pi i/3}\in \C$. Using Lemma~\ref{root2}, we see that $B$ is isomorphic to $Q^{1,-\theta,0}$. By Lemma~\ref{dege1}, $H_B(t)=(1-t)^{-3}$ and therefore $H_A(t)=(1-t)^{-3}$. By Lemma~\ref{duser} and Corollary~\ref{deg3a} $A$ is Koszul, which completes the proof of Theorem~\ref{copo0} in this case.

\subsection{Case $p^3=q^3\neq 0$, $r\neq 0$}

In this case $A=S^{a,s}$ with $a^3=-1$ and $s\neq 0$ and the Gr\"obner basis of the ideal of the relations of $A$ is
$$
\textstyle xx-\frac1syz+\frac aszy,\ xy-ayx-szz,\ xz-\frac1a zx-\frac sa yy,\ yyz+\frac1a zyy,\ yzz+\frac1a zzy.
$$
None of the leading monomials of the members of this basis starts with $z$. As above, it follows that $zu\neq 0$ for every non-zero $u\in A$ and therefore $A$ has no non-trivial right annihilators.

It is easy to describe the normal words. Namely, they are the words of the shape
$z^k(yz)^ly^mx^\epsilon$ with $k,l,m\in\Z_+$ and $\epsilon\in\{0,1\}$. Now one easily sees that the number of normal words of degree $n$ is exactly the number of pairs $(k,m)$ of non-negative integers satisfying $k+m\leq n$, which is $\frac{(n+1)(n+2)}2$. Indeed, for every $k,m\in\Z_+$ satisfying $k+m\leq n$, there are unique $l\in\Z_+$ and $\epsilon\in\{0,1\}$ for which the degree of $z^k(yz)^ly^mx^\epsilon$ is $n$. Hence $H_A(t)=(1-t)^{-3}$. By Lemma~\ref{duser} and Corollary~\ref{deg3a} $A$ is Koszul, which completes the proof of Theorem~\ref{copo0} in this case.

\subsection{Case $(p^3-r^3)(q^3-r^3)=0$, $(p^3-r^3,q^3-r^3)\neq (0,0)$ and $pqr\neq 0$}

By Lemma~\ref{swap}, we can without loss of generality assume that $p^3=r^3$. Now by Lemma~\ref{root1}, we can without loss of generality assume that $p=r$. Hence $A=S^{a,-1}$, where $a\neq 0$ and $a^3\neq -1$. Unfortunately, in this case the Gr\"obner basis of the ideal of the relations of $A$ does not appear to be finite. However there is a way around that. Namely, computing the Gr\"obner basis of the ideal of the relations of $A=S^{a,-1}$ up to degree 4 (there are 2 elements of degree 3 and 2 elements of degree 4), one  easily verifies that $g=yzx-zzz$, is  central in $A$. Now let $B=A/I$, where $I$ is the ideal in $A$ generated by $g$. The Gr\"obner basis of the ideal of the ideal of the relations of $B$ is
$$
\begin{array}{l}
x^2{+}yz{-}azy,\ xy{-}ayx{+}z^2,\ xz{-}\frac1azx{-}\frac1ay^2,\ yzx{-}z^3,\ y^2x{-}zyz,\ y^2z{-}ayzy{+}\frac1azy^2{+}\frac1az^2x,
\\
yzyz{-}ayz^2y{+}z^3x,\ yzyx{-}\frac1ayz^3{-}\frac1az^3y,\ yzy^2{+}yz^2x{-}az^4,\ y^4{+}yz^3{-}azyz^2,
\\
yz^2yx{-}\frac1ayz^4{+}\frac1az^3yz{-}z^4y,\ yz^2y^2{-}z^3yx,\ yz^3y{-}z^4x,\ yz^2yzy{-}\frac1{a^2}yz^4x{-}\frac1{a^2}z^3y^3{+}\frac1az^4yx,
\\
yz^2yz^2{+}\frac1ayz^4y{-}\frac1az^3yzy{-}z^4x,\ yz^6{-}z^6y,\ yz^4yz{-}z^5y^2,\ yz^4y^2{+}yz^5x{-}a^2z^5yx{+}az^7.
\end{array}
$$
Yet again, none of the leading monomials of the members of this basis starts with $z$. Hence $zu\neq 0$ in $B$ for every non-zero $u\in B$. Note that the set of leading monomials depends neither on $a$ nor on the underlying field $\K$. Let $C$ be the algebra $A$ in the case $\K=\C$ and $a=2$ and $D$ be the corresponding algebra $B$: $D=C/\langle g\rangle$. Since $C=Q^{1,-2,1}$, Lemma~\ref{root2} yields that $C$ is isomorphic to $Q^{1,\theta,0}$, where $\theta=2^{2\pi i/3}$. In particular, $C$ being isomorphic to the algebra of quantum polynomials in 3 variables has no zero divisors and satisfies $H_C(t)=(1-t)^{-3}$. Hence $D$, being a factor of $C$ by a central element of degree $3$, satisfies $\dim D_0=1$, $\dim D_1=3$, $\dim D_2=6$ and $\dim D_n=\dim C_n-\dim C_{n-3}=3n$ for $n\geq 3$. Since $H_B=H_D$, we have
$H_B(t)=1+\sum\limits_{n=1}^\infty 3nt$. Now since $B$ is a factor of $A$ by a central element of degree $3$, we have
$\dim A_{n}\leq \dim A_{n-3}+\dim B_{n}=\dim A_{n-3}+3n$ for $n\geq 3$ and all these inequalities turn into equalities precisely when $g$ is not a zero divisor. Solving these recurrent inequalities and using the initial data $\dim A_0=1$, $\dim A_1=3$, $\dim A_2=6$, we get $\dim A_n\leq \frac{(n+1)(n+2)}2$ for $n\in\Z_+$ and all these inequalities turn into equalities precisely when $g$ is not a zero divisor. Combining this with Lemma~\ref{loest}, we conclude that $H_A=(1-t)^{-3}$ and that $g$ is not a zero divisor in $A$.

Now assume that there is a non-zero homogeneous element of $A$ satisfying $zu=0$. Then there is such an element $u$ of the lowest degree. Since $zu=0$ in $B$, we have $u=0$ in $B$. By definition of $B$, there is $v\in A$ such that $u=vg$ in $A$. Then $zvg=0$ in $A$. Since $g$ is not a zero divisor $zv=0$ in $A$. Since $v$ is non-zero and has degree lower (by 3) than $u$, we have arrived to a contradiction. Hence $zu\neq 0$ in $A$ for every non-zero $u\in A$ and therefore $A$ has no non-trivial right annihilators. By Lemma~\ref{duser} and Corollary~\ref{deg3a} $A$ is Koszul, which completes the proof of Theorem~\ref{copo0} in this case.

\subsection{Main case $pqr(p^3-r^3)(q^3-r^3)(p^3-q^3)\neq 0$}

In this case $A=S^{a,s}$ with $as(a^3+1)(s^3+1)(a^3-s^3)\neq 0$. For the sake of brevity, we use the following notation
$$
\alpha=a^3+1\ \ \text{and}\ \ \beta=s^3+1.
$$
The above restrictions on $a$ and $s$ yield $\alpha\beta(\alpha-\beta)(\alpha-1)(\beta-1)\neq 0$. In this case
$$
\textstyle g=yyy+\frac{\alpha-\beta}{s\beta}yzx-\frac{a}{s}zyx+\frac{\alpha-\beta}{\beta}zzz
$$
is a non-zero central element in $A$. It is given in \cite{AS} and reproduced in \cite{ATV2}. In fact it is straightforward (we have done it to be on the safe side) to verify that $g$ is indeed non-zero and central by computing the members of the Gr\"obner basis of the ideal of the relations of $A$ up to degree $4$. Now we consider the algebra
$$
B=A/I,\ \ \text{where $I$ is the ideal in $A$ generated by $g$.}
$$
In other words, $B$ is given by the generators $x$, $y$ and $z$ and the relations
\begin{align}
xx&=\textstyle \frac1syz-\frac aszy, \label{RE1}
\\
xy&=\textstyle ayx+szz, \label{RE2}
\\
yz&=\textstyle syy+azy, \label{RE3}
\\
yyy&=\textstyle -\frac{\alpha-\beta}{s\beta}yzx+\frac{a}{s}zyx-\frac{\alpha-\beta}{\beta}zzz, \label{RE4}
\end{align}
where the first three of the above relations are the defining relations of $A$. Resolving the overlaps $xxy$, $xxz$ and $yyxz$, we obtain further $3$  relations holding in $B$:
\begin{align}
yyx&=\textstyle -\frac{a^2\beta}{s^2\alpha}yzz+\frac{1}{s}zyz-\frac{a\beta}{s^2\alpha}zzy, \label{RE5}
\\
yyz&=\textstyle \frac{a\alpha}{\alpha-\beta}yzy-\frac{1}{a}zyy-\frac{s^2\alpha}{a(\alpha-\beta)}zzx, \label{RE6}
\\
\textstyle \frac{\alpha^2-\alpha\beta+\beta^2}{\beta(\alpha-\beta)}yzyx&=\textstyle\frac{s(\alpha^2-\alpha\beta+\beta^2-\alpha^2\beta)}
{a\alpha\beta(\beta-1)}yzzz+\frac{a(\alpha^2-\alpha\beta+\beta^2)}{s^2\alpha(\alpha-\beta)}zzyz +\frac{\alpha^2-\alpha\beta+\beta^2-\alpha^2\beta}{as^2\beta(\alpha-\beta)}zzzy. \label{RE7}
\end{align}
Note that (\ref{RE4}), (\ref{RE5}) and (\ref{RE6}) correspond to all degree $3$ members of the Gr\"obner basis for the ideal of the relations of $B$, while (\ref{RE7}) is just one degree $4$ member of the same basis.

Next we consider the graded right $B$-module
$$
M=B/zB.
$$
The reason for doing this is apparent from the following lemma.

\begin{lemma}\label{whyM} The following implications hold true
\begin{align}
&\textstyle H_M(t)=1+2t+\sum\limits_{n=2}^\infty 3t^n\,\,\Longrightarrow\,\, H_A(t)=(1-t)^{-3}\ \text{and $A$ is Koszul;} \label{MA1}
\\
&\text{$n\geq 2$ and $\dim M_j\leq 3$ for $2\leq j\leq n$}\,\,\Longrightarrow\,\,\dim M_{j}=3\ \text{for}\ 2\leq j\leq n.
\label{MA2}
\end{align}
\end{lemma}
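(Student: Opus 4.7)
My plan is to exploit the short exact sequence $0 \to zB \to B \to M \to 0$ of graded right $B$-modules together with $B = A/\langle g\rangle$ to derive upper bounds on $\dim B_n$ and $\dim A_n$, and to compare them with the lower bound $\dim A_n \geq \frac{(n+1)(n+2)}{2}$ from Lemma~\ref{loest}. Two arithmetic inequalities drive everything: since $b \mapsto zb$ sends $B_{n-1}$ onto $(zB)_n$, one has $\dim B_n \leq \dim B_{n-1} + \dim M_n$, and since $g$ is central with $\langle g\rangle_n = gA_{n-3}$, one has $\dim A_n \leq \dim B_n + \dim A_{n-3}$.

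For (\ref{MA1}), I would start from $\dim M_0 = 1$, $\dim M_1 = 2$ and the hypothesis $\dim M_n \leq 3$ for $n\geq 2$; iterating the first inequality gives $\dim B_n \leq 3n$ for $n\geq 2$. A straightforward induction on the second inequality, using $\dim A_0=1$, $\dim A_1=3$, $\dim A_2=6$, then yields $\dim A_n \leq \frac{(n+1)(n+2)}{2}$, which Lemma~\ref{loest} forces to be an equality; hence $H_A(t) = (1-t)^{-3}$. For Koszulity I would invoke Corollary~\ref{deg3a}: Lemma~\ref{duser} supplies $H_{A^!}(t)=(1+t)^3$ and the non-degeneracy $wA_2^!\neq 0$, while $H_A(-t)H_{A^!}(t) = (1+t)^{-3}(1+t)^3 = 1$ is automatic. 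What remains is the absence of non-trivial right annihilators in $A$. Tightness of the sandwich forces $g$ to be a non-zero-divisor in $A$ and $z$ to be a non-zero-divisor in $B$; an induction on degree then lifts the latter to $A$ (if $zu=0$ with $u\neq 0$ homogeneous, push to $B$ to conclude $u = gv$, then $gzv = zgv = 0$ and non-divisibility of $g$ yields $zv = 0$ with $\deg v < \deg u$). Since every right annihilator is killed by $z$ in particular, this suffices.

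For (\ref{MA2}), the same chain of estimates truncated at degree $n$ turns the hypothesis $\dim M_j \leq 3$ for $2\leq j\leq n$ into $\dim A_j \leq \frac{(j+1)(j+2)}{2}$ for $j\leq n$, which Lemma~\ref{loest} sharpens to equalities. Working backwards one computes $\dim B_j = \frac{(j+1)(j+2)}{2} - \frac{(j-2)(j-1)}{2} = 3j$, and then the string $3j = \dim B_j \leq \dim B_{j-1} + \dim M_j \leq 3(j-1) + 3 = 3j$ forces $\dim M_j = 3$, as required.

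I expect the main obstacle to be the zero-divisor bookkeeping inside the Koszulity step of (\ref{MA1}), since it is the mechanism that converts tight Hilbert-series equalities into the injectivity of $d_3$ demanded by Corollary~\ref{deg3a}. Everything else reduces to telescoping arithmetic once the short exact sequence for $M$ is in place.
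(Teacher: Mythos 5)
Your proposal is correct and follows essentially the same route as the paper: the two telescoping inequalities coming from $M=B/zB$ and $B=A/gA$, squeezed against the lower bound of Lemma~\ref{loest}, to extract both the Hilbert series and the injectivity of left multiplication by $g$ on $A$ and by $z$ on $B$, followed by the same minimal-degree induction lifting $z$-injectivity to $A$ and an appeal to Lemma~\ref{duser} and Corollary~\ref{deg3a}. The backwards computation $\dim B_j=3j$ and the forced equality $\dim M_j=3$ in (\ref{MA2}) likewise match the paper's chain of equivalences.
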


\begin{proof} Clearly,
$$
\dim B_j=\dim zB_{j-1}+\dim B_j/zB_{j-1}=\dim zB_{j-1}+\dim M_j\ \ \text{for $j\geq1$.}
$$
Hence
\begin{equation*}
\begin{array}{l}
\dim B_j\leq \dim B_{j-1}+\dim M_j\ \text{for $j\geq 1$;}\\
\dim B_j=\dim B_{j-1}+\dim M_j\iff zu\neq 0\ \ \text{for $u\in B_{j-1}\setminus\{0\}$}.
\end{array}
\end{equation*}

Since, obviously, $\dim M_0=\dim B_0=1$ and $\dim M_1=2$, the above display yields
\begin{equation}\label{MA4}
\begin{array}{l}
\text{provided $n\geq 2$ and $\dim M_j\leq 3$ for $2\leq j\leq n$, we have}\\
\dim B_j\leq 3j\ \text{for $1\leq j\leq n$;}\\
\dim B_j=3j\ \text{for $1\leq j\leq n$}\iff \left\{\begin{array}{l}\text{$\dim M_j=3$ for $2\leq j\leq n$ and}\\
\text{$zu\neq 0$ in $B$ for $u\in B\setminus\{0\}$ with $\deg u<n$.}\end{array}\right.
\end{array}
\end{equation}

Since $g$ is central in $A$ and is a homogeneous element of degree $3$, we have
$$
\dim A_j=\dim gA_{j-3}+\dim A_j/gA_{j-3}=\dim gA_{j-3}+\dim B_j\ \ \text{for $j\geq3$.}
$$

Since $\dim A_0=\dim B_0=1$, $\dim A_1=\dim B_1=3$ and $\dim A_2=\dim B_2=6$, the above display yields
\begin{equation}\label{MA5}
\begin{array}{l}
\text{provided $n\geq 3$ and $\dim B_j\leq 3j$ for $1\leq j\leq n$, we have}\\
\dim A_j\leq \frac{(j+1)(j+2)}{2}\ \text{for $0\leq j\leq n$;}\\
\dim A_j=\frac{(j+1)(j+2)}{2}\ \text{for $0\leq j\leq n$}{\iff} \left\{\begin{array}{l}\text{$\dim B_j=3j$ for $1\leq j\leq n$ and}\\
\text{$gu\neq 0$ in $A$ for $u\in A\setminus\{0\}$ with $\deg u\leq$\,\rlap{$n-3$.}}\end{array}\right.
\end{array}
\end{equation}

Combining (\ref{MA4}) and (\ref{MA5}), we get
\begin{equation}\label{MA6}
\begin{array}{l}
\text{provided $n\geq 3$ and $\dim M_j\leq 3$ for $2\leq j\leq n$, we have}\\
\dim A_j\leq \frac{(j+1)(j+2)}{2}\ \text{for $0\leq j\leq n$;}\\
\dim A_j=\frac{(j+1)(j+2)}{2}\ \text{for $0\leq j\leq n$}{\iff} \left\{\begin{array}{l}\text{$\dim M_j=3$ for $2\leq j\leq n$,}\\
\text{$zu\neq0$ in $B$ for $u\in B\setminus\{0\}$ with $\deg u<n$,}\\ \text{$gu\neq 0$ in $A$ for $u\in A\setminus\{0\}$ with \rlap{$\deg u\leq n-3$.}}\end{array}\right.
\end{array}
\end{equation}

On the other hand, be Lemma~\ref{loest}, $\dim A_j\geq \frac{(j+1)(j+2)}{2}$ for each $j\in\Z_+$. Thus (\ref{MA6}) can be rewritten as follows:
\begin{equation}\label{MA7}
\begin{array}{l}
\text{provided $n\geq 3$ and $\dim M_j\leq 3$ for $2\leq j\leq n$, we have}\\
\dim A_j= \frac{(j+1)(j+2)}{2}\ \text{for $0\leq j\leq n$, $\dim M_j=3$ for $2\leq j\leq n$},\\
\text{$zu\neq0$ in $B$ for $u\in B\setminus\{0\}$ with $\deg u<n$}\\
\text{and $gu\neq 0$ in $A$ for $u\in A\setminus\{0\}$ with $\deg u\leq n-3$.}
\end{array}
\end{equation}

Obviously, (\ref{MA2}) is a direct consequence of (\ref{MA7}). Now assume that $H_M(t)=1+2t+\sum\limits_{n=2}^\infty 3t^n$. By (\ref{MA7}), $H_A(t)=(1-t)^{-3}$, $zu\neq 0$ in $B$ for every $u\in B\setminus\{0\}$ and $g$ is not a  zero divisor in $A$. Now we shall show that $zu\neq 0$ for every $u\in A\setminus \{0\}$. Assume the contrary. Then there is the minimal $n\in\N$ for which there exists $u\in A_n\setminus\{0\}$ satisfying $zu=0$ in $A$. Hence $zu=0$ in $B$. Since we already know that $z$ is not a left zero divisor in $B$, $u=0$ in $B$. Hence there is $v\in A$ such that $u=vg$ in $A$. Since $u\neq 0$ in $A$, we have $v\neq 0$ in $A$. Since $0=zu=zvg$ in $A$ and $g$ is not a zero divisor in $A$, we have $zv=0$ in $A$. Since $\deg v=\deg u -3=n-3<n$, we have arrived to a contradiction with the minimality of $n$. Thus $zu\neq 0$  for each $u\in A\setminus\{0\}$ and therefore $A$ has no non-trivial right annihilators. By Lemma~\ref{duser}, $H_{A^!}(t)=(1+t)^3$. Hence $H_A(t)H_{A^!}(-t)=1$. Now Corollary~\ref{deg3a} implies that $A$ is Koszul, which completes the proof.
\end{proof}

According to Lemma~\ref{whyM}, the proof of Theorem~\ref{copo0} will be complete as soon as we prove that $H_M(t)=1+2t+\sum\limits_{n=2}^\infty 3t^n$. The rest of this section is devoted to doing exactly this by means of applying the Gr\"obner basis technique. The second part of Lemma~\ref{whyM} is just a tool which spares us from doing some of the calculations. We start by describing the typical situation in which the components of $M$ find themselves.

For $n\in\Z_+$, we say that condition $\Omega(n)$ is satisfied if
\begin{align}
&\qquad \text{$\dim M_j=3$ for $2\leq j\leq n+3$, $yz^{n+1}V=M_{n+3}$ and there are $p_n,q_n,r_n\in\K$ such that}\notag
\\
&\textstyle yz^nyx=-\frac{a^2}{s^2}p_nyz^{n+2},\ \ yz^nyy=-\frac{1}{s}q_nyz^{n+1}x,\ \ yz^nyz=ar_nyz^{n+1}y, \label{omega1}
\end{align}
where (\ref{omega1}) consists of equalities in $M$.

First, observe that if $\Omega(n)$ is satisfied, $yz^{n+2}$, $yz^{n+1}x$ and $yz^{n+1}y$ are linearly independent in $M$ and therefore the numbers $p_n$, $q_n$ and $r_n$ are uniquely determined. Next, using (\ref{RE4}), (\ref{RE5}) and (\ref{RE6}), one easily sees that
\begin{equation} \label{omeg0}
\text{$\Omega(0)$ is satisfied with $\textstyle p_0=\frac{\beta}{\alpha}$, $\textstyle q_0=\frac{\alpha-\beta}{\beta}$ and $\textstyle r_0=\frac{\alpha}{\alpha-\beta}$.}
\end{equation}

\begin{lemma}\label{omeg1} Assume that $\Omega(n)$ is satisfied. Then the following equations hold in $M:$
\begin{equation}\label{omeg2}
\begin{array}{lll}
\textstyle b_n^{1,1}yz^{n+1}yx=-\frac{a^2}{s^2}c_n^{1,1}yz^{n+3}& \text{and}&\textstyle b_n^{1,2}yz^{n+1}yx=-\frac{a^2}{s^2}c_n^{1,2}yz^{n+3},\\
\textstyle b_n^{2,1}yz^{n+1}yy=-\frac{1}{s}c_n^{2,1}yz^{n+2}x& \text{and}&\textstyle b_n^{2,2}yz^{n+1}yy=-\frac{1}{s}c_n^{2,2}yz^{n+2}x,
\\
b_n^{3,1}yz^{n+1}yz=ac_n^{3,1}yz^{n+2}y& \text{and}&b_n^{3,2}yz^{n+1}yz=ac_n^{3,2}yz^{n+2}y,
\end{array}
\end{equation}
where
\begin{equation}\label{omeg3}
\begin{array}{ll}
b_n^{1,1}=\alpha(\alpha-1)r_n-\beta(\alpha-1),&c_n^{1,1}=\beta(\alpha-1)p_n+(\beta-1)(\alpha-\beta),\\
b_n^{1,2}=\beta(\alpha-1)q_n-(\alpha-1)(\alpha-\beta)r_n+(\alpha-1)\beta,& c_n^{1,2}=\beta(\beta-1)q_n-(\beta-1)(\alpha-\beta),\\
b_n^{2,1}=\beta(\alpha-1)r_n-(\alpha-\beta),&c_n^{2,1}=(\alpha-1)(\alpha-\beta)p_n-\alpha(\beta-1),\\
b_n^{2,2}=(\alpha-\beta)q_n-\alpha(\alpha-1)r_n+(\alpha-\beta),& c_n^{2,2}=-(\alpha-\beta)q_n+\alpha(\beta-1),\\
b_n^{3,1}=(\alpha-\beta)r_n-\alpha,&c_n^{3,1}=\alpha p_n-\beta,\\
b_n^{3,2}=\alpha q_n-\beta(\alpha-1)r_n+\alpha,& c_n^{3,2}=\alpha q_n+\beta.
\end{array}
\end{equation}
Moreover, $(b_n^{2,1},c_n^{2,1},b_n^{2,2},c_n^{2,2})\neq (0,0,0,0)$ and $(b_n^{3,1},c_n^{3,1},b_n^{3,2},c_n^{3,2})\neq (0,0,0,0)$. Furthermore, if $(b_n^{1,1},b_n^{1,2})\neq (0,0)$, $(b_n^{2,1},b_n^{2,2})\neq (0,0)$ and $(b_n^{3,1},b_n^{3,2})\neq (0,0)$, then $\Omega(n+1)$ is satisfied.
\end{lemma}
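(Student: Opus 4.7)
The proof proceeds by direct calculation in the right $B$-module $M=B/zB$, using the inductive information $\Omega(n)$ together with the defining relations (\ref{RE1})--(\ref{RE7}) of $B$. The plan is first to derive the six equations in (\ref{omeg2}) by computing chosen degree-$(n+4)$ elements of $M$ along two different rewriting paths; second to verify the nonvanishing of the two listed $4$-tuples by elementary algebra; and third to use the resulting reductions to establish $M_{n+4}=yz^{n+2}V$ and to extract $p_{n+1},q_{n+1},r_{n+1}$.

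For each of the three targets $yz^{n+1}yx$, $yz^{n+1}yy$, $yz^{n+1}yz$, I would compute a chosen degree-$(n+4)$ element of $M$ in two ways: once by applying one of the higher relations (\ref{RE4})--(\ref{RE7}) to an inner subword, and once by using the formulas of $\Omega(n)$ directly. For example, to obtain one equation of the pair targeting $yz^{n+1}yx$, I would expand $yz^n\cdot zyx=yz^{n+1}yx$ by substituting for $zyx$ via (\ref{RE4}) in terms of $yyy,yzx,zzz$, then reduce each summand using $\Omega(n)$ and (\ref{RE1})--(\ref{RE3}); this yields a linear relation between $yz^{n+1}yx$ and the basis candidates $yz^{n+3},yz^{n+2}x,yz^{n+2}y$ of $M_{n+4}$. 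A different starting expression such as $yz^n\cdot yyx$ rewritten via (\ref{RE5}) produces the companion equation. The pairs for $yz^{n+1}yy$ and $yz^{n+1}yz$ are derived analogously, using (\ref{RE6}) and (\ref{RE7}) in place of (\ref{RE4}) and (\ref{RE5}). The explicit forms of $b_n^{i,j}$ and $c_n^{i,j}$ in (\ref{omeg3}) then emerge from routine collection of terms, after the substitutions $a^3=\alpha-1$ and $s^3=\beta-1$.

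For the nonvanishing assertions I would argue by contradiction. Suppose $b_n^{2,1}=c_n^{2,1}=b_n^{2,2}=c_n^{2,2}=0$. The conditions $b_n^{2,1}=0$ and $c_n^{2,1}=0$ are linear in $r_n$ and $p_n$ respectively and determine them uniquely as rational functions of $\alpha,\beta$; substituting into $b_n^{2,2}=0$ and $c_n^{2,2}=0$, then eliminating $q_n$, produces a polynomial identity in $\alpha,\beta$ alone, which can be checked to contradict the standing hypothesis $\alpha\beta(\alpha-1)(\beta-1)(\alpha-\beta)\neq 0$. The same method dispatches the $4$-tuple $(b_n^{3,1},c_n^{3,1},b_n^{3,2},c_n^{3,2})$.

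Finally, suppose $(b_n^{i,1},b_n^{i,2})\neq(0,0)$ for each $i\in\{1,2,3\}$. For each $i$ I pick an index $j_i$ with $b_n^{i,j_i}\neq 0$; the corresponding equation of (\ref{omeg2}) expresses $yz^{n+1}yx$, $yz^{n+1}yy$, $yz^{n+1}yz$ respectively as scalar multiples of $yz^{n+3},yz^{n+2}x,yz^{n+2}y$, and I define $p_{n+1},q_{n+1},r_{n+1}$ from the resulting ratios, with the normalizations $-s^2/a^2,-s,a$ matching the form in $\Omega(n+1)$. Combining these three reductions with the routine ones for $yz^{n+1}xx$, $yz^{n+1}xy$ via (\ref{RE1})--(\ref{RE2}), the identities $yz^{n+1}zv=yz^{n+2}v$, and a further reduction of $yz^{n+1}xz$ obtained by iterated use of (\ref{RE3}) together with (\ref{RE5}) (the summands beginning with $z$ vanishing in $M$), all nine generating products of $M_{n+4}$ land in $yz^{n+2}V$. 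Hence $\dim M_{n+4}\leq 3$, and invoking (\ref{MA2}) forces $\dim M_{n+4}=3$, completing $\Omega(n+1)$. The principal obstacle is the bookkeeping in step two --- verifying that the two rewriting paths for each pair produce linearly independent equations matching (\ref{omeg3}) exactly --- together with the clean handling of $yz^{n+1}xz$ for arbitrary $n$ in the final step.
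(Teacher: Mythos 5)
Your derivation of the six equations in (\ref{omeg2}) --- computing chosen degree-$(n+4)$ words along two rewriting paths, once via $\Omega(n)$ and once via the higher relations --- is the same overlap-resolution strategy as the paper's (which resolves $(yz^nyx)z$, $(yz^nyy)y$, $(yz^nyx)x$, $(yz^nyy)z$, $(yz^nyx)y$, $(yz^nyy)x$ against (\ref{omega1}) and (\ref{RE1}--\ref{RE6})), and your closing argument for $\Omega(n+1)$ also matches the paper's. The fatal problem is the middle step, the two nonvanishing claims.

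Your proposed proof of $(b_n^{2,1},c_n^{2,1},b_n^{2,2},c_n^{2,2})\neq(0,0,0,0)$ by pure elimination cannot work. Solving $b_n^{2,1}=c_n^{2,1}=c_n^{2,2}=0$ for $r_n$, $p_n$, $q_n$ and substituting into $b_n^{2,2}=0$ does not yield a contradiction with $\alpha\beta(\alpha-1)(\beta-1)(\alpha-\beta)\neq 0$; it yields the single equation $\alpha^2-\alpha\beta+\beta^2-\alpha\beta^2=0$, which has many solutions compatible with all the standing genericity assumptions. Likewise, the vanishing of $(b_n^{3,1},c_n^{3,1},b_n^{3,2},c_n^{3,2})$ is equivalent to specific values of $p_n,q_n,r_n$ together with $\alpha^2-\alpha\beta+\beta^2-\alpha^2\beta=0$, again not excluded. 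So no ``polynomial identity in $\alpha,\beta$ alone'' contradicts the hypotheses, and this is precisely why the lemma is delicate. The paper's actual argument is module-theoretic: assuming the $4$-tuple vanishes, it determines which of the remaining coefficients in (\ref{omeg3}) are forced to be zero or nonzero, deduces that the two equations in the first (resp.\ third) line of (\ref{omeg2}) become linearly independent, kills enough monomials to get $\dim M_{n+4}<3$ (and, in the second case, after resolving six further overlaps, $M_{n+5}=\{0\}$), and contradicts (\ref{MA2}), which ultimately rests on the Hilbert-series lower bound of Lemma~\ref{loest}. Some such input from the module $M$ is indispensable here, and since Lemmas~\ref{curve}--\ref{OME2} all rely on exactly these nonvanishing statements, the gap propagates through the rest of the proof of Theorem~\ref{copo0}. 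A secondary caution: because your rewriting paths differ from the paper's chosen overlaps, you must still check that your pairs of equations reproduce the exact coefficients (\ref{omeg3}) (not merely equivalent linear combinations), since those explicit formulas are quoted verbatim in the later lemmas.
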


\begin{proof} The equalities (\ref{omeg2}) are obtained by resolving (and reducing) the overlaps $(yz^kyx)z=yz^ky(xz)$, $(yz^kyy)y=yz^k(yyy)$, $(yz^kyx)x=yz^ky(xx)$, $(yz^kyy)z=yz^k(yyz)$, $(yz^kyx)y=yz^ky(xy)$ and $(yz^kyy)x=yz^k(yyx)$ respectively using (\ref{omega1}) and (\ref{RE1}--\ref{RE6}).

Now, let us show that $(b_n^{2,1},c_n^{2,1},b_n^{2,2},c_n^{2,2})\neq (0,0,0,0)$. Assume the contrary: $b_n^{2,1}=c_n^{2,1}=b_n^{2,2}=c_n^{2,2}=0$. According to (\ref{omeg3}), these equalities yield $p_n=\frac\beta\alpha$, $q_n=\frac{\alpha-\beta}{\beta}$, $r_n=\frac{\alpha-\beta}{\beta(\alpha-1)}$ and $\alpha^2-\alpha\beta+\beta^2-\alpha\beta^2=0$, which together with (\ref{omeg3}) imply that $c_n^{1,2}=c_{n}^{3,1}=0$, $b_n^{3,1}=\frac{\alpha(\beta-\alpha)}{\alpha-1}\neq 0$, $c_n^{3,2}=\alpha\beta\neq 0$, $c_n^{1,1}=-\frac{\beta(\alpha-\beta)}{\alpha}\neq 0$ and $b_{n}^{1,2}=-\alpha(\alpha-\beta)\neq 0$ (recall that $\alpha\beta(\alpha-\beta)(\alpha-1)(\beta-1)\neq 0$). Hence the two equations in the first line of (\ref{omeg2}) are linearly independent and so are the two equations in the third line of (\ref{omeg2}). Thus (\ref{omeg2}) yields
$yz^{n+1}yx=yz^{n+3}=yz^{n+1}yz=yz^{n+2}y=0$ in $M$. Since $M_{n+3}$ is spanned by $yz^{n+1}x$, $yz^{n+1}y$ and $yz^{n+2}$, these equalities imply that $M_{n+4}$ is spanned by $yz^{n+1}yy$ and $yz^{n+2}x$. Hence $\dim M_{n+4}<3$, while $\dim M_j\leq 3$ for $j\leq n+3$. We have arrived to a contradiction with (\ref{MA2}), which proves that $(b_n^{2,1},c_n^{2,1},b_n^{2,2},c_n^{2,2})\neq (0,0,0,0)$.

Next, let us show that $(b_n^{3,1},c_n^{3,1},b_n^{3,2},c_n^{3,2})\neq (0,0,0,0)$. Assume the contrary: $b_n^{3,1}=c_n^{3,1}=b_n^{3,2}=c_n^{3,2}=0$. According to (\ref{omeg3}), these equalities yield $p_n=-q_n=\frac\beta\alpha$, $r_n=\frac\alpha{\alpha-\beta}$ and $\alpha^2-\alpha\beta+\beta^2-\alpha^2\beta=0$, which together with (\ref{omeg3}) imply that $c_n^{1,1}=b_{n}^{2,1}=0$, $c_n^{2,1}=\beta(\alpha-\beta)\neq 0$, $b_n^{1,1}=\frac{\alpha^2\beta(\alpha-1)}{\alpha-\beta}\neq 0$ and $c_n^{1,2}=-\alpha\beta(\beta-1)\neq 0$. Since
$c_n^{1,1}=0$, $b_n^{1,1}\neq 0$ and $c_n^{1,2}\neq 0$, the two equations in the first line of (\ref{omeg2}) are linearly independent. This together with $b_n^{2,1}=0$ and $c_n^{2,1}\neq 0$ implies that $yz^{n+1}yx=yz^{n+2}x=yz^{n+3}=0$ in $M$.

These equalities together with the fact that $M_{n+3}$ is spanned by $yz^{n+1}x$, $yz^{n+1}y$ and $yz^{n+2}$ implies that $M_{n+4}$ is spanned by $yz^{n+1}yy$, $yz^{n+1}yz$ and $yz^{n+2}y$. Resolving the overlaps $(yz^{n+2}x)x=yz^{n+2}(xx)$,
$(yz^{n+2}x)y=yz^{n+2}(xy)$, $(yz^{n+2}x)z=yz^{n+2}(xz)$, $(yz^{n+1}yx)x=yz^{n+1}y(xx)$, $(yz^{n+1}yx)y=yz^{n+1}y(xy)$ and $(yz^{n+1}yx)z=yz^{n+1}y(xz)$ by means of the relations $yz^{n+1}yx=yz^{n+2}x=yz^{n+3}=0$ in $M$ and (\ref{RE1}--\ref{RE6}) in $B$ we get, respectively, that the equalities $yz^{n+2}yz=0$, $yz^{n+2}yx=0$, $yz^{n+2}yy=0$, $yz^{n+1}yzy=0$, $yz^{n+1}yzz=0$ and $yz^{n+1}yzx=0$ are satisfied in $M$. These equalities together with the fact that $M_{n+4}$ is spanned by $yz^{n+1}yy$, $yz^{n+1}yz$ and $yz^{n+2}y$ yield $M_{n+5}=\{0\}$. Again, we have arrived to a contradiction with (\ref{MA2}), which proves that $(b_n^{3,1},c_n^{3,1},b_n^{3,2},c_n^{3,2})\neq (0,0,0,0)$.

Finally, assume that $(b_n^{1,1},b_n^{1,2})\neq (0,0)$, $(b_n^{2,1},b_n^{2,2})\neq (0,0)$ and $(b_n^{3,1},b_n^{3,2})\neq (0,0)$. Then (\ref{omeg3}) yields the existence of $p_{n+1}$, $q_{n+1}$ and $r_{n+1}$ in $\K$ such that (\ref{omega1}) with $n$ replaced by $n+1$ is satisfied. By $\Omega(n)$, $yz^{n+1}V=M_{n+3}$. Hence $yz^{n+1}V^2=M_{n+4}$. Using (\ref{RE1}--\ref{RE3}) and (\ref{omega1}) with $n$ replaced by $n+1$, one easily sees that $yz^{n+2}V=M_{n+4}$. In particular, $\dim M_{n+4}\leq 3$ and therefore $\dim M_{n+4}=3$ by (\ref{MA2}). Hence $\Omega(n+1)$ is satisfied.
\end{proof}

\begin{lemma}\label{periodic} Assume that $\Omega(n)$ is satisfied and $p_n=-q_n=r_n=\frac{\beta}{\alpha}$. Then $H_M(t)=1+2t+\sum\limits_{n=2}^\infty 3t^n$.
\end{lemma}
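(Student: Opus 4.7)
The plan is to turn the hypothesis into a handful of annihilation-type identities in $M$, combine them with the defining relations of $B$ to pin down $\dim M_{n+4}$, and then bootstrap through higher degrees using (\ref{MA2}) from Lemma~\ref{whyM}.

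First I would substitute $p_n=r_n=\beta/\alpha$ and $q_n=-\beta/\alpha$ directly into the formulas (\ref{omeg3}) for the constants $b_n^{i,j}$ and $c_n^{i,j}$. A short calculation gives $b_n^{1,1}=b_n^{1,2}=0$, $c_n^{3,1}=c_n^{3,2}=0$, $b_n^{2,1}+c_n^{2,1}=0$, and (using $\alpha\beta(\alpha-\beta)(\alpha-1)(\beta-1)\neq 0$) at least one of $c_n^{1,1}, c_n^{1,2}$ is nonzero; Lemma~\ref{omeg1} already guarantees that one of $b_n^{3,1}, b_n^{3,2}$ is nonzero too. Substituting into (\ref{omeg2}) then collapses the six equations there to three identities in $M$:
\[
\text{(a) } yz^{n+3}=0,\qquad\text{(b) } yz^{n+1}yy=\tfrac{1}{s}\,yz^{n+2}x,\qquad\text{(c) } yz^{n+1}yz=0.
\]
Next I would apply the defining relation $yz=syy+azy$ to the final two letters of identity (c), obtaining $s\,yz^{n+1}yy+a\,yz^{n+2}y=0$ in $M$; combined with (b), this produces the further identity
\[
\text{(d) } yz^{n+2}x+a\,yz^{n+2}y=0 \quad\text{in } M.
\]

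With (a)--(d) in hand, the nine products $yz^{n+1}uv$ and $yz^{n+2}w$ ($u,v,w\in\{x,y,z\}$) spanning $M_{n+4}$ reduce, using (\ref{RE1})--(\ref{RE3}) together with (a)--(d), to the three elements $\{yz^{n+1}yx,\,yz^{n+1}xz,\,yz^{n+2}y\}$. Hence $\dim M_{n+4}\leq 3$. Combined with the hypothesis $\dim M_j=3$ for $2\leq j\leq n+3$, the statement (\ref{MA2}) of Lemma~\ref{whyM} promotes this to $\dim M_{n+4}=3$. I would then iterate degree by degree: at each stage multiply the current three-element spanning set of $M_{j-1}$ on the right by $V$, reduce the resulting nine generators using (\ref{RE1})--(\ref{RE7}), the $M$-images $yyy\equiv -\tfrac{\alpha-\beta}{s\beta}yzx$, $yyx\equiv -\tfrac{a^2\beta}{s^2\alpha}yzz$, $yyz\equiv \tfrac{a\alpha}{\alpha-\beta}yzy$ of (\ref{RE4})--(\ref{RE6}), and the right-module propagations of (a)--(d) (e.g.\ $yz^{n+3}\cdot B=0$ and $(yz^{n+1}yz)\cdot B=0$ in $M$), and verify that at most three survive. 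Invoking (\ref{MA2}) once more yields $\dim M_j=3$; combining with $\dim M_0=1$ and $\dim M_1=2$ produces the claimed series $H_M(t)=1+2t+\sum_{j=2}^\infty 3t^j$.

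The principal obstacle is this inductive step beyond $M_{n+4}$. Because (d) forces $yz^{n+2}V$ to collapse to a one-dimensional subspace of $M_{n+4}$, the hypothesis $\Omega(n+1)$ in the form stated fails, so the clean induction machine of Lemma~\ref{omeg1} cannot be reused verbatim. Instead, at each degree $j\geq n+5$ one must propose a new candidate three-element spanning set of $M_j$, compute the relevant overlaps, and verify compatibility with the propagated versions of (a)--(d). Careful bookkeeping of how the degree-three and degree-four Gr\"obner basis elements of $B$ interact with the right-module structure of $M$ is where the bulk of the combinatorial work lies.
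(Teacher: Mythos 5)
Your first step is essentially the paper's: substituting $p_n=-q_n=r_n=\frac\beta\alpha$ into (\ref{omeg3}) does reduce (\ref{omeg2}) to the three identities $yz^{n+3}=0$, $yz^{n+1}yz=0$ and $yz^{n+1}yy=\frac1s\,yz^{n+2}x$ in $M$, and your verification that each line of (\ref{omeg2}) stays non-trivial is correct. Your identity (d), however, is spurious. It is obtained by rewriting the tail of (c) with ``$yz=syy+azy$''; that formula is a misprint in the displayed form of (\ref{RE3}) and is \emph{not} a relation of $B$ (together with (\ref{RE1}) it would force $xx=yy$ in $B_2$). The genuine relations of $S^{a,s}$ are $yz=azy+sxx$, $zx=axz+syy$, $xy=ayx+szz$; substituting $yz=azy+sxx$ into (c) and then reducing $xx$ by (\ref{RE1}) returns a tautology, not a new identity. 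Indeed (d) cannot hold: reducing the nine products $M_{n+3}V$ correctly (in particular, $yz^{n+1}xz$ reduces via the $xz$-relation and (b) to a multiple of $yz^{n+2}x$, so it is redundant, while $yz^{n+2}x$ must be kept) shows that $M_{n+4}$ is spanned by $yz^{n+1}yx$, $yz^{n+2}x$ and $yz^{n+2}y$; if additionally $yz^{n+2}x=-a\,yz^{n+2}y$ then $\dim M_{n+4}\leq 2$, contradicting (\ref{MA2}). So (d), your spanning set for $M_{n+4}$, and the ``collapse of $yz^{n+2}V$'' built on them must all be discarded.

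The more serious defect is the one you flag yourself: for degrees $\geq n+5$ you propose to ``compute a new spanning set at each degree and verify,'' which is an unbounded computation, not a proof, and you supply no mechanism that terminates it. The paper closes the induction in one stroke with a stabilization argument. Using (\ref{RE1})--(\ref{RE6}) and the three identities above one checks that $M_{n+5}=M_{n+4}V$ is spanned by $yz^{n+2}yx$, $yz^{n+2}yy$, $yz^{n+2}yz$, i.e.\ $M_{n+5}=yz^{n+2}yV$ and hence $M_{n+4+k}=yz^{n+2}yB_k$ for all $k\geq1$. Since $yz^{n+3}=0$ in $M$ means $yz^{n+3}\in zB$, any $u\in B_k$ with $yu\in zB$ satisfies $yz^{n+2}yu\in yz^{n+3}B\subseteq zB$; therefore the surjection $u\mapsto yz^{n+2}yu$ from $B_k$ onto $M_{n+4+k}$ factors through the image of $B_k$ under $u\mapsto yu$ in $M_{k+1}$, giving $\dim M_{n+4+k}\leq\dim M_{k+1}$. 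An induction on the degree then yields $\dim M_j\leq 3$ for all $j$, and (\ref{MA2}) upgrades this to $\dim M_j=3$ for $j\geq2$, which is the claimed Hilbert series. Without this shift argument (or a substitute for it) your proposal does not establish the lemma.
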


\begin{proof} It is easy to check that in the case $p_n=-q_n=r_n=\frac{\beta}{\alpha}$, the equations (\ref{omeg2}) provided by Lemma~\ref{omeg1} read $yz^{n+3}=0$, $yz^{n+1}y=0$ and $yz^{n+1}yy=\frac1s yz^{n+2}x$ (in $M$). It follows that $M_{n+4}$ is spanned by $yz^{n+1}yx$, $yz^{n+2}x$ and $yz^{n+2}y$. Now using the relations (\ref{RE1}--\ref{RE6}), it is easy to verify that $M_{n+5}=M_{n+4}V$ is spanned by $yz^{n+2}yx$, $yz^{n+2}yy$ and $yz^{n+2}yz$. That is, $M_{n+5}=yz^{n+2}yV$. Since $yz^{n+3}=0$ in $M$, it follows that if $u\in B$ and $yu=0$ in $M$, then $yz^{n+2}yu=0$ in $M$. Applying this observation to $u\in B_k$ and using the equality $M_{n+4+k}=yz^{n+2}yB_k$ (follows from $M_{n+5}=yz^{n+2}yV$), we get $\dim M_{n+4+k}\leq \dim M_{4+k}$ for $k\in\N$. Since $\Omega(n)$ is satisfied and since we have already checked that $M_{n+4}$ and $M_{n+5}$ have $3$-element spanning sets, we get $\dim M_j\leq 3$ for $j\leq n+5$. Now the inequality $\dim M_{n+4+k}\leq \dim M_{4+k}$ for $k\in\N$ yields $\dim M_j\leq 3$ for all $j$. By (\ref{MA2}), $H_M(t)=1+2t+\sum\limits_{n=2}^\infty 3t^n$.
\end{proof}

\begin{lemma}\label{periodic11} Assume that $\alpha^2-\alpha\beta+\beta^2=0$. Then $H_M(t)=1+2t+\sum\limits_{n=2}^\infty 3t^n$.
\end{lemma}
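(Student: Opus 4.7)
The plan is to combine the explicit base case $\Omega(0)$ recorded in (\ref{omeg0}) with Lemma~\ref{periodic} applied at $n=0$. Lemma~\ref{periodic} asks for the triple $(p_n,q_n,r_n)$ attached to some valid $\Omega(n)$ to satisfy $p_n=-q_n=r_n=\beta/\alpha$, and the cheapest route is to show that this happens at $n=0$ precisely under the hypothesis $\alpha^2-\alpha\beta+\beta^2=0$.

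So the first step is to read off from (\ref{omeg0}) the three scalars
\[
p_0=\tfrac{\beta}{\alpha},\qquad q_0=\tfrac{\alpha-\beta}{\beta},\qquad r_0=\tfrac{\alpha}{\alpha-\beta},
\]
which make sense because $\alpha\beta(\alpha-\beta)\neq 0$. Next I would verify the two identities $p_0=-q_0$ and $p_0=r_0$ directly. Cross-multiplying, $p_0=-q_0$ is equivalent to $\beta^2=-\alpha(\alpha-\beta)=-\alpha^2+\alpha\beta$, that is, $\alpha^2-\alpha\beta+\beta^2=0$; and $p_0=r_0$ is equivalent to $\beta(\alpha-\beta)=\alpha^2$, i.e. $\alpha\beta-\beta^2=\alpha^2$, again the same condition. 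Both hold under our hypothesis, so $p_0=-q_0=r_0=\beta/\alpha$.

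With $\Omega(0)$ already in hand from (\ref{omeg0}), Lemma~\ref{periodic} with $n=0$ now immediately delivers the desired Hilbert series $H_M(t)=1+2t+\sum_{n=2}^{\infty}3t^n$. No obstacle is expected: the argument is just the observation that the single algebraic constraint $\alpha^2-\alpha\beta+\beta^2=0$ is exactly the condition that makes the triple $(p_0,q_0,r_0)$ from the base case collapse into the ``periodic'' configuration handled by Lemma~\ref{periodic}.
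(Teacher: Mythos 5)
Your proof is correct and matches the paper's own argument essentially verbatim: both invoke the base case $\Omega(0)$ from (\ref{omeg0}), check that $\alpha^2-\alpha\beta+\beta^2=0$ forces $p_0=-q_0=r_0=\frac{\beta}{\alpha}$, and then apply Lemma~\ref{periodic} at $n=0$. The explicit cross-multiplication checks you include are a fine (if slightly more detailed) rendering of the paper's one-line verification.
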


\begin{proof} By (\ref{omeg0}), $\Omega(0)$ is satisfied with $p_0=\frac{\beta}{\alpha}$, $q_0=\frac{\alpha-\beta}{\beta}$ and $r_0=\frac{\alpha}{\alpha-\beta}$. Using $\alpha^2-\alpha\beta+\beta^2=0$, we see that $p_0=-q_0=r_0=\frac{\beta}{\alpha}$. It remains to apply Lemma~\ref{periodic}.
\end{proof}

\begin{lemma}\label{curve} Assume that $\alpha^2-\alpha\beta+\beta^2\neq 0$ and $\Omega(n)$ is satisfied. Then
\begin{equation}\label{curve1}
p_n(q_n+1)=r_n((\alpha-1)p_n-(\beta-1))=q_n(r_n-1).
\end{equation}
\end{lemma}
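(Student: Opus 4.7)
The plan is to extract, from each of the three pairs of equations in (\ref{omeg2}), a vanishing $2\times 2$ determinant in the coefficient ring, and then to deduce (\ref{curve1}) by solving a small linear system whose coefficient matrix is invertible precisely because $\alpha^2 - \alpha\beta + \beta^2 \neq 0$.

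For $i = 1, 2, 3$ let $D_i := b_n^{i,1}c_n^{i,2} - b_n^{i,2}c_n^{i,1}$ and denote by $(T_L^{(i)}, T_R^{(i)})$ the pair of monomials in $M_{n+4}$ featuring in the $i$-th line of (\ref{omeg2})---namely $(yz^{n+1}yx, yz^{n+3})$, $(yz^{n+1}yy, yz^{n+2}x)$, $(yz^{n+1}yz, yz^{n+2}y)$. Standard elimination between the two equations in line $i$ gives $D_i T_L^{(i)} = D_i T_R^{(i)} = 0$ in $M$, so it suffices to establish that each $T_R^{(i)} \neq 0$ in $M$: then $D_i = 0$ in $\K$ for all $i$.

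To secure $T_R^{(i)} \neq 0$, I will show $\dim M_{n+4} \leq 3$, whereupon (\ref{MA2}) forces $\dim M_{n+4} = 3$ and makes $\{T_R^{(i)}\}_{i=1,2,3}$ a basis. By $\Omega(n)$, $M_{n+3} = yz^{n+1}V$, and since $B$ is generated in degree one, $M_{n+4} = M_{n+3}V$ is spanned by the nine monomials $yz^{n+1}v_1v_2$; the Sklyanin relations modulo $zB$ yield $xx \equiv \frac{1}{s}yz$, $xy \equiv ayx$, $xz \equiv -\frac{s}{a}yy$ in $M$, which cuts the spanning set down to the six monomials $\{T_L^{(i)}, T_R^{(i)}\}_{i=1,2,3}$. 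For $i = 2, 3$, Lemma~\ref{omeg1} already asserts $(b_n^{i,1}, c_n^{i,1}, b_n^{i,2}, c_n^{i,2}) \neq (0,0,0,0)$, and for $i = 1$ a direct inspection of (\ref{omeg3}) shows that the simultaneous vanishing of $b_n^{1,1}, c_n^{1,1}, b_n^{1,2}, c_n^{1,2}$ would force $\alpha^2 - \alpha\beta + \beta^2 = 0$, which is excluded by hypothesis. Consequently each of the three pairs of equations in (\ref{omeg2}) non-trivially restricts $\{T_L^{(i)}, T_R^{(i)}\}$ to at most a one-dimensional span in $M$, giving $\dim M_{n+4} \leq 3$ as required.

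It remains to translate $D_1 = D_2 = D_3 = 0$ into (\ref{curve1}). With $X_1 := p_n(q_n+1)$, $X_2 := r_n((\alpha-1)p_n - (\beta-1))$ and $X_3 := q_n(r_n-1)$, the key observation is that in each $D_i$ the six monomials $p_nq_n, p_nr_n, q_nr_n, p_n, q_n, r_n$ pair naturally into $(p_nq_n, p_n)$, $(p_nr_n, r_n)$, $(q_nr_n, q_n)$, recombining precisely into $X_1, X_2, X_3$ with coefficients (depending only on $\alpha, \beta$) that sum to zero. Thus each $D_i$ is, up to a non-zero scalar, a linear combination of $X_1 - X_2$ and $X_3 - X_2$. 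A short direct computation gives $D_2 \propto -(\alpha-\beta)(X_1-X_2) - \beta(X_3-X_2)$ and $D_3 \propto -\alpha(X_1-X_2) + (\alpha-\beta)(X_3-X_2)$, whose $2 \times 2$ coefficient matrix has determinant $-(\alpha^2-\alpha\beta+\beta^2)$, non-zero by hypothesis. Hence $X_1 = X_2 = X_3$, which is (\ref{curve1}). The main obstacle is the bookkeeping in this final expansion; it is finicky but routine polynomial arithmetic, and the hypothesis $\alpha^2 - \alpha\beta + \beta^2 \neq 0$ enters in exactly two places: ruling out the degenerate line-1 case, and making the final $2\times 2$ system non-degenerate.
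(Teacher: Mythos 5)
Your strategy coincides with the paper's: force each of the three $2\times 2$ determinants $D_i=b_n^{i,1}c_n^{i,2}-b_n^{i,2}c_n^{i,1}$ to vanish by playing the dimension bound against (\ref{MA2}), and then solve the resulting homogeneous linear system in $X_1=p_n(q_n+1)$, $X_2=r_n((\alpha-1)p_n-(\beta-1))$, $X_3=q_n(r_n-1)$, whose non-degeneracy is exactly the hypothesis $\alpha^2-\alpha\beta+\beta^2\neq 0$. Your endgame checks out: indeed $D_2=(\alpha-1)(\alpha-\beta)\bigl(-(\alpha-\beta)(X_1-X_2)-\beta(X_3-X_2)\bigr)$ and $D_3=\alpha\bigl(-\alpha(X_1-X_2)+(\alpha-\beta)(X_3-X_2)\bigr)$, the scalar prefactors are non-zero, and the $2\times2$ determinant is $-(\alpha^2-\alpha\beta+\beta^2)$, so $D_2=D_3=0$ already gives $X_1=X_2=X_3$. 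Your treatment of line $1$ (simultaneous vanishing of $b_n^{1,1},c_n^{1,1},b_n^{1,2},c_n^{1,2}$ forces $\alpha^2-\alpha\beta+\beta^2=0$) also matches the paper.

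There is, however, one step that fails as stated: the claim that $\dim M_{n+4}=3$ \emph{makes $\{T_R^{(i)}\}_{i=1,2,3}$ a basis}. The dimension count only shows that each pair $\{T_L^{(i)},T_R^{(i)}\}$ spans exactly a one-dimensional subspace; it does not decide which of the two monomials survives, and $T_R^{(i)}$ genuinely can vanish under the hypotheses of the lemma. For instance, at the parameter values treated in Lemma~\ref{OME1} one has $b_n^{2,1}=b_n^{2,2}=0$ with $(c_n^{2,1},c_n^{2,2})\neq(0,0)$, so the second line of (\ref{omeg2}) forces $T_R^{(2)}=yz^{n+2}x=0$ while $T_L^{(2)}=yz^{n+1}yy$ spans that line's one-dimensional contribution; similarly $T_R^{(3)}=yz^{n+2}y=0$ in the situation of Lemma~\ref{OME2}. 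In those cases you cannot conclude $D_i=0$ from $D_iT_R^{(i)}=0$. The repair is immediate and is in effect what the paper does: if $D_i\neq0$, the two equations of line $i$ are independent and annihilate \emph{both} $T_L^{(i)}$ and $T_R^{(i)}$, so that pair contributes nothing and $\dim M_{n+4}\leq 2$, contradicting (\ref{MA2}); hence $D_i=0$ in every case. A second, smaller point: the congruences $xx\equiv\frac1s yz$, $xy\equiv ayx$, $xz\equiv-\frac sa yy$ modulo $zB$ cannot be substituted inside the products $yz^{n+1}(xx)$ etc., because $zB$ is only a right ideal. You should instead use the full relations (\ref{RE1}--\ref{RE3}) of $B$, e.g.\ $yz^{n+1}xx=\frac1s yz^{n+1}yz-\frac as yz^{n+2}y$; this still lands inside your six-monomial spanning set, so the bound $\dim M_{n+4}\leq3$ survives, but the justification as written is not correct.
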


\begin{proof} Let $b_n^{j,k}$ and $c_n^{j,k}$ be the numbers defined in (\ref{omeg3}). By Lemma~\ref{omeg1}, $(b_n^{2,1},c_n^{2,1},b_n^{2,2},c_n^{2,2})\neq (0,0,0,0)$ and $(b_n^{3,1},c_n^{3,1},b_n^{3,2},c_n^{3,2})\neq (0,0,0,0)$. Furthermore, the equality $b_n^{1,1}=c_n^{1,1}=b_n^{1,2}=c_n^{1,2}=0$ implies $\alpha^2-\alpha\beta+\beta^2=0$ and therefore $(b_n^{1,1},c_n^{1,1},b_n^{1,2},c_n^{1,2})\neq (0,0,0,0)$. Thus each of the lines in (\ref{omeg2}) contains at least one non-trivial equation. It is a matter of straightforward verification that if in any of the lines the two equations are linearly independent, then $\dim M_{n+4}<3$ and we arrive to a contradiction with (\ref{MA2}). Thus each of the matrices
$$
\left(\begin{array}{cc}
b_n^{1,1}&c_n^{1,1}\\
b_n^{1,2}&c_n^{1,2}
\end{array}\right),\quad \left(\begin{array}{cc}
b_n^{2,1}&c_n^{2,1}\\
b_n^{2,2}&c_n^{2,2}
\end{array}\right)\ \ \text{and}\ \
\left(\begin{array}{cc}
b_n^{3,1}&c_n^{3,1}\\
b_n^{3,2}&c_n^{3,2}
\end{array}\right)
$$
is degenerate. Hence the determinants of the matrices in the above display equal $0$. Plugging in the explicit expressions (\ref{omeg3}) for $b_n^{j,k}$ and $c_n^{j,k}$ and simplifying, we arrive to the system
$$
\begin{array}{l}
0=\alpha^2p_n(q_n+1)-\alpha\beta r_n((\alpha-1)p_n-(\beta-1))-\alpha(\alpha-\beta)q_n(r_n-1);
\\
0=(\alpha-\beta)p_n(q_n+1)-\alpha r_n((\alpha-1)p_n-(\beta-1))+\beta q_n(r_n-1);
\\
0=\beta (\alpha-1)p_n(q_n+1)-(\alpha-\beta)r_n((\alpha-1)p_n-(\beta-1))-\alpha(\beta-1)q_n(r_n-1).
\end{array}
$$
This is a system of linear equations on the variables $p_n(q_n+1)$, $r_n((\alpha-1)p_n-(\beta-1))$ and $q_n(r_n-1)$. The third equation is always a linear combination of the first two, while the first two equations are linearly independent precisely when $\alpha^2-\alpha\beta+\beta^2\neq 0$. Now it is easy to see that this system is equivalent to (\ref{curve1}).
\end{proof}

\begin{lemma}\label{OME} Assume that $\Omega(n)$ is satisfied, $\alpha^2-\alpha\beta+\beta^2\neq0$ and
$$
(p_n,q_n,r_n)\notin \textstyle\Bigl\{\Bigl(\frac\beta\alpha,-\frac\beta\alpha,\frac\beta\alpha\Bigr),
\Bigl(-\frac{(\beta-1)(\alpha-\beta)}{\beta(\alpha-1)},\frac{\alpha-\beta}{\beta},\frac{\alpha-\beta}{\beta(\alpha-1)}\Bigr),
\Bigl(\frac{\alpha(\beta-1)}{(\alpha-1)(\alpha-\beta)},\frac{\alpha(\beta-1)}{\alpha-\beta},\frac\alpha{\alpha-\beta}\Bigr)
\Bigr\}.
$$
Then $\Omega(n+1)$ is satisfied.
\end{lemma}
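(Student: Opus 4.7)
The strategy is straightforward. By the final clause of Lemma~\ref{omeg1}, to prove $\Omega(n+1)$ it suffices to verify that each of the three pairs $(b_n^{1,1},b_n^{1,2})$, $(b_n^{2,1},b_n^{2,2})$, $(b_n^{3,1},b_n^{3,2})$ is non-zero. I would proceed by contraposition: assume that some pair vanishes and show this forces $(p_n,q_n,r_n)$ to coincide with one of the three excluded triples.

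Fix $j\in\{1,2,3\}$ and suppose $b_n^{j,1}=b_n^{j,2}=0$. Each $b_n^{j,k}$ is an explicit affine function of $q_n$ and $r_n$ (but not $p_n$), so the two vanishings determine $q_n$ and $r_n$ uniquely in terms of $\alpha,\beta$. A direct back-substitution into the defining formulas of (\ref{omeg3}) gives:
\begin{itemize}\itemsep=-2pt
\item[(i)] for $j=1$: $r_n=\beta/\alpha$ and $q_n=-\beta/\alpha$;
\item[(ii)] for $j=2$: $r_n=(\alpha-\beta)/(\beta(\alpha-1))$ and $q_n=(\alpha-\beta)/\beta$;
\item[(iii)] for $j=3$: $r_n=\alpha/(\alpha-\beta)$ and $q_n=\alpha(\beta-1)/(\alpha-\beta)$.
\end{itemize}
Here the hypothesis $\alpha\beta(\alpha-1)(\beta-1)(\alpha-\beta)\neq 0$ inherited from the main case of Section~\ref{s3} ensures that all denominators are non-zero and each pair of linear equations is non-degenerate.

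Now the assumption $\alpha^2-\alpha\beta+\beta^2\neq 0$ allows me to invoke Lemma~\ref{curve}, which provides the relation $p_n(q_n+1)=q_n(r_n-1)$ among the three coordinates. In each of the cases (i)--(iii) above, once $q_n$ and $r_n$ are known, this relation uniquely determines $p_n$ (note that $q_n+1\neq 0$ in each case, as a direct check shows $q_n+1$ equals $(\alpha-\beta)/\alpha$, $\alpha/\beta$, $\beta(\alpha-1)/(\alpha-\beta)$ respectively, all non-zero under our standing hypotheses). Carrying out the three short calculations yields precisely the triples
$$
\textstyle\Bigl(\frac\beta\alpha,-\frac\beta\alpha,\frac\beta\alpha\Bigr),\quad
\Bigl(-\frac{(\beta-1)(\alpha-\beta)}{\beta(\alpha-1)},\frac{\alpha-\beta}{\beta},\frac{\alpha-\beta}{\beta(\alpha-1)}\Bigr),\quad
\Bigl(\frac{\alpha(\beta-1)}{(\alpha-1)(\alpha-\beta)},\frac{\alpha(\beta-1)}{\alpha-\beta},\frac{\alpha}{\alpha-\beta}\Bigr).
$$
Each of these is explicitly excluded by hypothesis, a contradiction. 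Hence no pair $(b_n^{j,1},b_n^{j,2})$ vanishes, and Lemma~\ref{omeg1} yields $\Omega(n+1)$.

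The only slightly non-routine point is verifying that each of the three pairs of linear equations in $(q_n,r_n)$ is consistent (not inconsistent) and yields the values listed, and then that the constraint from Lemma~\ref{curve} singles out exactly the listed $p_n$. There is no conceptual obstacle: the argument is a purely algebraic check, facilitated by the prior work having pre-packaged both the divisibility relations (in Lemma~\ref{omeg1}) and the constraint curve (in Lemma~\ref{curve}). The mild subtlety is bookkeeping the non-vanishing of the various factors involving $\alpha,\beta$ when inverting; this is handled uniformly by the running assumption $\alpha\beta(\alpha-1)(\beta-1)(\alpha-\beta)\neq 0$ together with the additional hypothesis $\alpha^2-\alpha\beta+\beta^2\neq 0$.
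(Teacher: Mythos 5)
Your proposal is correct and follows essentially the same route as the paper's proof: both reduce the claim to the final clause of Lemma~\ref{omeg1} and then use the explicit formulas (\ref{omeg3}) together with the relation from Lemma~\ref{curve} to show that the vanishing of any pair $(b_n^{j,1},b_n^{j,2})$ forces $(p_n,q_n,r_n)$ to be one of the three excluded triples. Your write-up merely makes explicit the computation (solving the triangular linear system for $q_n,r_n$ and then recovering $p_n$ from $p_n(q_n+1)=q_n(r_n-1)$, with the needed non-vanishing checks) that the paper compresses into the phrase ``we easily obtain.''
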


\begin{proof} Let $b_n^{j,k}$ and $c_n^{j,k}$ be the numbers defined in (\ref{omeg3}). Using the equation (\ref{curve1}) provided by  Lemma~\ref{curve} together with (\ref{omeg3}), we easily obtain that
\begin{align*}
b_n^{1,1}=b_n^{1,2}=0&\iff \textstyle (p_n,q_n,r_n)=\Bigl(\frac\beta\alpha,-\frac\beta\alpha,\frac\beta\alpha\Bigr);
\\
b_n^{2,1}=b_n^{2,2}=0&\iff \textstyle (p_n,q_n,r_n)=
\Bigl(-\frac{(\beta-1)(\alpha-\beta)}{\beta(\alpha-1)},\frac{\alpha-\beta}{\beta},\frac{\alpha-\beta}{\beta(\alpha-1)}\Bigr);
\\
b_n^{3,1}=b_n^{3,2}=0&\iff \textstyle (p_n,q_n,r_n)=
\Bigl(\frac{\alpha(\beta-1)}{(\alpha-1)(\alpha-\beta)},\frac{\alpha(\beta-1)}{\alpha-\beta},\frac\alpha{\alpha-\beta}\Bigr).
\end{align*}

By Lemma~\ref{omeg1}, $\Omega(n+1)$ is satisfied if $(b_n^{1,1},b_n^{1,2})\neq (0,0)$, $(b_n^{2,1},b_n^{2,2})\neq (0,0)$ and $(b_n^{3,1},b_n^{3,2})\neq (0,0)$. Hence the above display yields the required result.
\end{proof}

\begin{lemma}\label{OME1} Assume that $\Omega(n)$ is satisfied, $\alpha^2-\alpha\beta+\beta^2\neq0$ and
$$
(p_n,q_n,r_n)=\textstyle
\Bigl(-\frac{(\beta-1)(\alpha-\beta)}{\beta(\alpha-1)},\frac{\alpha-\beta}{\beta},\frac{\alpha-\beta}{\beta(\alpha-1)}\Bigr).
$$
Then $\Omega(n+2)$ is satisfied.
\end{lemma}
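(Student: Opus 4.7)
The plan is to exploit the specific degeneracy of $(p_n,q_n,r_n)$ to extract an additional relation in $M$ and then push the computation through two steps, since $\Omega(n+1)$ itself will fail. First I would substitute the given triple into (\ref{omeg3}) and check directly that $b_n^{2,1}=b_n^{2,2}=0$, while a short computation gives $c_n^{2,1}=-\frac{(\beta-1)(\alpha^2-\alpha\beta+\beta^2)}{\beta}\neq 0$, using $\alpha^2-\alpha\beta+\beta^2\neq 0$ together with $\beta(\beta-1)\neq 0$. The two equations in the second row of (\ref{omeg2}) therefore both collapse to the single relation $yz^{n+2}x=0$ in $M$. A parallel calculation for rows~$1$ and~$3$ shows $(b_n^{1,1},b_n^{1,2})\neq(0,0)$ and $(b_n^{3,1},b_n^{3,2})\neq(0,0)$, so these rows yield genuine reductions $yz^{n+1}yx=\lambda\,yz^{n+3}$ and $yz^{n+1}yz=\mu\,yz^{n+2}y$ for explicit $\lambda,\mu\in\K$ determined by $(p_n,q_n,r_n)$.

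Next I would compute $M_{n+4}=M_{n+3}\cdot V=yz^{n+1}V\cdot V$ by expanding each product $yz^{n+1}uv$ with $u,v\in\{x,y,z\}$ and reducing using (\ref{RE1})--(\ref{RE6}) together with the three displayed relations above. The outcome should be that $M_{n+4}$ is spanned by $\{yz^{n+1}yy,\,yz^{n+2}y,\,yz^{n+3}\}$, and by (\ref{MA2}) this set is a basis. Note that $\Omega(n+1)$ formally fails here: because $yz^{n+2}x=0$ in $M$, the set $yz^{n+2}V$ spans only a $2$-dimensional subspace of $M_{n+4}$, not all of it. I would then compute $M_{n+5}=M_{n+4}\cdot V$ in the analogous fashion, using in addition the degree-$4$ relation (\ref{RE7}). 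After reduction all spanning elements should collapse into $yz^{n+3}V=\spann\{yz^{n+3}x,\,yz^{n+3}y,\,yz^{n+4}\}$, which by (\ref{MA2}) must be $3$-dimensional and hence a basis of $M_{n+5}$.

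Finally, once $yz^{n+3}V$ is known to be a basis of $M_{n+5}$, I would resolve the six overlaps of the form $(yz^{n+2}yu)v=yz^{n+2}y(uv)$ and $(yz^{n+2}yu)v=yz^{n+2}(yuv)$ (mirroring the list used in the proof of Lemma~\ref{omeg1}) to produce explicit scalars $p_{n+2},q_{n+2},r_{n+2}\in\K$ such that the three equalities of (\ref{omega1}) with $n$ replaced by $n+2$ hold. Combining these with the bases of $M_j$ computed en route for $j\leq n+5$ and with the equalities $\dim M_j=3$ for $2\leq j\leq n+3$ supplied by $\Omega(n)$ verifies all of $\Omega(n+2)$.

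The main technical obstacle is the bookkeeping: one has to propagate the new relation $yz^{n+2}x=0$ through all overlap reductions in degrees $n+4$ and $n+5$, and in particular verify that no further unintended collapse reduces $\dim M_{n+5}$ below $3$. The nonvanishing of the various denominators appearing in the explicit expressions for $\lambda$, $\mu$ and the eventual $p_{n+2},q_{n+2},r_{n+2}$ should ultimately reduce to the standing hypothesis $\alpha\beta(\alpha-\beta)(\alpha-1)(\beta-1)(\alpha^2-\alpha\beta+\beta^2)\neq 0$.
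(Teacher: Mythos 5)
Your plan is essentially the paper's proof: substituting the triple kills $b_n^{2,1}=b_n^{2,2}$ (and your value of $c_n^{2,1}$ is correct), forcing $yz^{n+2}x=0$, while the other two rows of (\ref{omeg2}) give $yz^{n+1}yx=0$ (in fact $c_n^{1,1}=c_n^{1,2}=0$, so your $\lambda$ is $0$) and $yz^{n+1}yz=a\,yz^{n+2}y$; then $M_{n+4}$ and $M_{n+5}$ are spanned exactly as you say and (\ref{MA2}) gives $3$-dimensionality. The only adjustment needed is in your last step: the paper extracts the $\Omega(n+2)$ relations by resolving $(yz^{n+2}x)v=yz^{n+2}(xv)$ for $v\in\{x,y,z\}$ against the vanishing of $yz^{n+2}x$, using only (\ref{RE1})--(\ref{RE3}) (so (\ref{RE7}) is not needed here), whereas the omeg1-style overlaps you list would presuppose reductions of $yz^{n+2}yx$ and $yz^{n+2}yy$ — the very relations being derived.
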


\begin{proof} Plugging $p_n=-\frac{(\beta-1)(\alpha-\beta)}{\beta(\alpha-1)}$, $q_n=\frac{\alpha-\beta}{\beta}$ and $r_n=\frac{\alpha-\beta}{\beta(\alpha-1)}$ into Lemma~\ref{omeg1}, we see that the equations (\ref{omeg2}) read $yz^{n+1}yx=0$, $yz^{n+2}x=0$ and $yz^{n+1}yz=ayz^{n+2}y$ in $M$. It follows that $M_{n+4}$ is spanned by $yz^{n+3}$, $yz^{n+1}yy$ and $yz^{n+2}y$. Using the equation $yz^{n+1}yx=0$ together with (\ref{RE1}--\ref{RE3}), we can resolve  the overlaps $(yz^{n+2}x)x=yz^{n+2}(xx)$, $(yz^{n+2}x)y=yz^{n+2}(xy)$ and $(yz^{n+2}x)z=yz^{n+2}(xz)$ to obtain that
$yz^{n+2}yx=-\frac{s}{a}yz^{n+4}$, $yz^{n+2}yy=\frac{1}{s}yz^{n+3}x$ and $yz^{n+2}yz=ayz^{n+3}y$. It also follows that $M_{n+5}$ is spanned by $yz^{n+4}$, $yz^{n+3}y$ and $yz^{n+3}x$. By (\ref{MA2}), $M_{n+4}$ and $M_{n+5}$ are 3-dimensional. Thus $\Omega(n+2)$ is satisfied with $p_{n+2}=\frac{s^3}{a^3}=\frac{\beta-1}{\alpha-1}$, $q_{n+2}=-1$ and $r_{n+2}=1$.
\end{proof}

\begin{lemma}\label{OME2} Assume that $\Omega(n)$ is satisfied, $\alpha^2-\alpha\beta+\beta^2\neq0$ and
$$
(p_n,q_n,r_n)=\textstyle
\Bigl(\frac{\alpha(\beta-1)}{(\alpha-1)(\alpha-\beta)},\frac{\alpha(\beta-1)}{\alpha-\beta},\frac\alpha{\alpha-\beta}\Bigr).
$$
Then $\Omega(n+3)$ is satisfied.
\end{lemma}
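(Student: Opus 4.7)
The plan is to follow the same Gr\"obner-style overlap reduction as in Lemma~\ref{OME1}, but iterated one more time. First, I substitute the given triple
$$(p_n,q_n,r_n) = \left(\frac{\alpha(\beta-1)}{(\alpha-1)(\alpha-\beta)},\ \frac{\alpha(\beta-1)}{\alpha-\beta},\ \frac{\alpha}{\alpha-\beta}\right)$$
into the six equations (\ref{omeg2}) from Lemma~\ref{omeg1}. A direct calculation using $\alpha^2-\alpha\beta+\beta^2\neq 0$ yields $b_n^{3,1}=b_n^{3,2}=0$ (this is exactly the case already highlighted in the proof of Lemma~\ref{OME}), with $(c_n^{3,1},c_n^{3,2})\neq(0,0)$, which forces $yz^{n+2}y=0$ in $M$. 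The second line collapses in a different way: one checks that $c_n^{2,1}=c_n^{2,2}=0$ while $b_n^{2,2}=-\frac{(\alpha-1)(\alpha^2-\alpha\beta+\beta^2)}{\alpha-\beta}\neq 0$, which forces $yz^{n+1}yy=0$ in $M$. The first line is nondegenerate and provides $yz^{n+1}yx$ as an explicit scalar multiple of $yz^{n+3}$.

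Next I describe $M_{n+4}$: since $M_{n+3}=yz^{n+1}V$ by $\Omega(n)$, the three identities above together with (\ref{RE1})-(\ref{RE6}) imply that $M_{n+4}$ is spanned by $yz^{n+3}$, $yz^{n+2}x$, and $yz^{n+1}yz$; (\ref{MA2}) then gives $\dim M_{n+4}=3$. Resolving the overlaps $(yz^{n+1}yy)\cdot w$, $(yz^{n+2}y)\cdot w$, and $(yz^{n+1}yx)\cdot w$ for $w\in\{x,y,z\}$ via (\ref{RE1})-(\ref{RE6}) produces identities of the exact form required by (\ref{omega1}) with $n$ replaced by $n+1$, certifying $\Omega(n+1)$ with explicit scalars $(p_{n+1},q_{n+1},r_{n+1})$. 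A routine comparison with the three exceptional triples listed in Lemma~\ref{OME} identifies this new triple as the Lemma~\ref{OME1} triple $\left(-\frac{(\beta-1)(\alpha-\beta)}{\beta(\alpha-1)},\ \frac{\alpha-\beta}{\beta},\ \frac{\alpha-\beta}{\beta(\alpha-1)}\right)$.

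Lemma~\ref{OME1} applied at level $n+1$ then immediately yields $\Omega((n+1)+2)=\Omega(n+3)$, as required. The main obstacle is the algebraic bookkeeping needed to verify that the intermediate triple $(p_{n+1},q_{n+1},r_{n+1})$ produced by the first reduction step is precisely the Lemma~\ref{OME1} triple; this amounts to a handful of polynomial identities in $\alpha$ and $\beta$, analogous in spirit and difficulty to those already dispatched in the proofs of Lemmas~\ref{OME} and~\ref{OME1}. A perhaps safer alternative is to perform two further direct reduction steps, computing $M_{n+5}$ and then $M_{n+6}$ explicitly so as to reach a spanning configuration $yz^{n+4}V=M_{n+6}$ together with the three relations of (\ref{omega1}) for new scalars $(p_{n+3},q_{n+3},r_{n+3})$, thereby certifying $\Omega(n+3)$ without invoking Lemma~\ref{OME1} at all.
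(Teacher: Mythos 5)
Your first reduction step is correct and agrees with the paper: substituting the given triple into (\ref{omeg2}) does yield $yz^{n+1}yx=-\frac sa yz^{n+3}$, $yz^{n+1}yy=0$ and $yz^{n+2}y=0$, and hence $M_{n+4}$ is spanned by $yz^{n+3}$, $yz^{n+2}x$ and $yz^{n+1}yz$. But your main route then breaks down: this configuration does \emph{not} certify $\Omega(n+1)$. The condition $\Omega(n+1)$ requires $yz^{n+2}V=M_{n+4}$, whereas $yz^{n+2}V={\rm span}\{yz^{n+3},yz^{n+2}x,yz^{n+2}y\}$ is at most two-dimensional because $yz^{n+2}y=0$; and the third identity of (\ref{omega1}) at level $n+1$ would read $yz^{n+1}yz=ar_{n+1}yz^{n+2}y=0$, forcing $yz^{n+1}yz=0$ and hence $\dim M_{n+4}\leq 2$, contradicting (\ref{MA2}). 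Your proposed identification of the resulting triple with the Lemma~\ref{OME1} triple is also inconsistent with your own computation: that triple has $q=\frac{\alpha-\beta}{\beta}\neq 0$, while $yz^{n+1}yy=0$ would force $q_{n+1}=0$. So the plan ``one step, then invoke Lemma~\ref{OME1} at level $n+1$'' cannot work; the degeneration here is of a genuinely different type (the third line of (\ref{omeg2}) collapses, not the second), and the $\Omega$-pattern is only recovered three steps later.

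The ``safer alternative'' you mention at the end is in fact the paper's proof and is the route you must take: resolve the overlaps of $yz^{n+1}yy=0$ with (\ref{RE1}--\ref{RE6}) to get $yz^{n+1}yzz$, $yz^{n+1}yzx$, $yz^{n+1}yzy$ as multiples of $yz^{n+3}y$, $yz^{n+4}$, $yz^{n+3}x$, so that $M_{n+5}$ is spanned by $yz^{n+4}$, $yz^{n+3}y$, $yz^{n+3}x$; then resolve one more layer of overlaps to obtain the three identities of (\ref{omega1}) at level $n+3$ and the spanning condition $yz^{n+4}V=M_{n+6}$. Two further points you omit: this last step requires the degree-four relation (\ref{RE7}) (to resolve $(yz^{n+1}yzy)x=yz^{n+1}(yzyx)$), not just (\ref{RE1})--(\ref{RE6}), and (\ref{RE7}) is usable precisely because $\alpha^2-\alpha\beta+\beta^2\neq 0$; and the explicit values $(p_{n+3},q_{n+3},r_{n+3})$ must be recorded since later lemmas act on them.
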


\begin{proof} Plugging $p_n=\frac{\alpha(\beta-1)}{(\alpha-1)(\alpha-\beta)}$, $q_n=\frac{\alpha(\beta-1)}{\alpha-\beta}$ and $r_n=\frac\alpha{\alpha-\beta}$ into Lemma~\ref{omeg1}, we see that the equations (\ref{omeg2}) read $yz^{n+1}yx=-\frac sayz^{n+3}$, $yz^{n+1}yy=0$ and $yz^{n+2}y=0$. It follows that $M_{n+4}$ is spanned by $yz^{n+3}$, $yz^{n+2}x$ and $yz^{n+1}yz$. Using the equation $yz^{n+1}yy=0$ together with (\ref{RE1}--\ref{RE6}), we can resolve  the overlaps $(yz^{n+1}yy)x=yz^{n+1}(yyx)$, $(yz^{n+1}yy)y=yz^{n+1}(yyy)$ and $(yz^{n+1}yy)z=yz^{n+1}(yyz)$ to obtain that
$yz^{n+1}yzz=-\frac{1}{a}yz^{n+3}y$, $yz^{n+1}yzx=-syz^{n+4}$ and $yz^{n+1}yzy=\frac{s^2}{a^2}yz^{n+3}x$. Now
$M_{n+4}$ is spanned by $yz^{n+4}$, $yz^{n+3}y$ and $yz^{n+3}x$. On the next step we resolve the overlaps $(yz^{n+1}yzx)z=yz^{n+1}yz(xz)$, $(yz^{n+1}yzx)x=yz^{n+1}yz(xx)$ and $(yz^{n+1}yzy)x=yz^{n+1}(yzyx)$ with the help of
(\ref{RE1}--\ref{RE7}) and the above equations in $M$ (note that (\ref{RE7}) is needed to resolve $(yz^{n+1}yzy)x=yz^{n+1}(yzyx)$ and that it can be used because $\alpha^2-\alpha\beta+\beta^2\neq0$) to obtain respectively that $yz^{n+3}yx=-\frac{a^2}{s^2}p_{n+3}yz^{n+5}$, $yz^{n+3}yy=-\frac{1}{s}aq_{n+3}yz^{n+4}x$ and $yz^{n+3}yz=ar_{n+3}yz^{n+4}y$ with $p_{n+3}=-\frac{(\beta-1)(\alpha-\beta)}{(\alpha-1)\beta}$, $q_{n+1}=\frac{\alpha(\beta-1)}{\alpha-\beta}$ and $r_n=\frac{\beta}{\alpha}$. It also follows that $M_{n+6}$ is spanned by $yz^{n+5}$, $yz^{n+4}y$ and $yz^{n+4}x$. By (\ref{MA2}), $M_{n+4}$, $M_{n+5}$ and $M_{n+6}$ are 3-dimensional. Thus $\Omega(n+3)$ is satisfied.
\end{proof}

\begin{lemma}\label{finalL} The Hilbert series of $M$ is given by $H_M(t)=1+2t+\sum\limits_{n=2}^\infty 3t^n$.
\end{lemma}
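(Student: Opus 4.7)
The plan is to split into two cases depending on whether $\alpha^2-\alpha\beta+\beta^2$ vanishes. If $\alpha^2-\alpha\beta+\beta^2=0$, then Lemma~\ref{periodic11} delivers the required Hilbert series directly, so the interesting case is $\alpha^2-\alpha\beta+\beta^2\neq 0$. There I intend to run an induction on $n$ showing that $\Omega(n)$ is satisfied for arbitrarily large $n$. Since the statement of $\Omega(n)$ includes the equalities $\dim M_j=3$ for $2\leq j\leq n+3$, establishing $\Omega(n)$ for unboundedly many $n$ forces $\dim M_j=3$ for every $j\geq 2$; together with $\dim M_0=1$ and $\dim M_1=2$, this is precisely the desired $H_M(t)=1+2t+\sum_{n=2}^\infty 3t^n$.

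The induction starts at $n=0$, which is valid by (\ref{omeg0}). Assume $\Omega(n)$ holds. I will compare the triple $(p_n,q_n,r_n)$ with the three exceptional points listed in Lemma~\ref{OME}. If $(p_n,q_n,r_n)=(\beta/\alpha,-\beta/\alpha,\beta/\alpha)$, Lemma~\ref{periodic} already gives the full conclusion $H_M(t)=1+2t+\sum_{n=2}^\infty 3t^n$ and the proof terminates. Otherwise, exactly one of three things happens: either none of the three exceptional triples is attained, in which case Lemma~\ref{OME} advances us to $\Omega(n+1)$; or the second exceptional triple is attained, in which case Lemma~\ref{OME1} advances us to $\Omega(n+2)$; or the third exceptional triple is attained, in which case Lemma~\ref{OME2} advances us to $\Omega(n+3)$. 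In every one of these three subcases the index strictly increases, so iterating the procedure produces an infinite sequence $0=n_0<n_1<n_2<\ldots$ with $\Omega(n_k)$ satisfied for all $k$.

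From such a sequence, each $\Omega(n_k)$ yields $\dim M_j=3$ for $2\leq j\leq n_k+3$, and since $n_k\to\infty$, this covers every $j\geq 2$. Combined with the initial case of Lemma~\ref{periodic} (should it be triggered), this exhausts all possibilities and proves the desired Hilbert series in every subcase.

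The main obstacle is conceptual rather than computational: the advancement step from $\Omega(n)$ to the next valid index can fail for three specific parameter triples, and one must verify that those failures themselves enable a suitably longer jump. All three relevant jumps have already been supplied by Lemmas~\ref{OME}, \ref{OME1} and \ref{OME2}, so once the case split is set up the induction closes immediately.
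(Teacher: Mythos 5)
Your proposal is correct and follows essentially the same route as the paper: dispose of the case $\alpha^2-\alpha\beta+\beta^2=0$ via Lemma~\ref{periodic11}, terminate via Lemma~\ref{periodic} if the triple $\bigl(\frac\beta\alpha,-\frac\beta\alpha,\frac\beta\alpha\bigr)$ ever occurs, and otherwise use Lemmas~\ref{OME}, \ref{OME1} and~\ref{OME2} to advance from $\Omega(n)$ to $\Omega(m)$ with $m\in\{n+1,n+2,n+3\}$, so that $\Omega(0)$ propagates to infinitely many $n$ and forces $\dim M_j=3$ for all $j\geq 2$. The only cosmetic difference is that the paper pulls the Lemma~\ref{periodic} alternative out in front of the induction rather than testing it at each step, which is logically equivalent.
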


\begin{proof} If $\alpha^2-\alpha\beta+\beta^2=0$, the result is provided by Lemma~\ref{periodic11}. For the rest of the proof we shall assume that $\alpha^2-\alpha\beta+\beta^2\neq 0$. If
$$
\text{there exists $n\in\Z_+$ such that $\Omega(n)$ is satisfied and $\textstyle p_n=-q_n=r_n=\frac\beta\alpha$,}
$$
the result is provided by Lemma~\ref{periodic}. Thus for the rest of the proof we can assume that the condition in the above display fails. Now by Lemmas~\ref{OME}, \ref{OME1} and~\ref{OME2}, if $\Omega(n)$  is satisfied, then $\Omega(m)$ is satisfied for some $m\in\{n+1,n+2,n+3\}$. By (\ref{omeg0}), $\Omega(0)$ is satisfied. Hence $\Omega(n)$ is satisfied for infinitely many $n$. It follows that $\dim M_j=3$ for $j\geq 2$. Since $\dim M_0=1$ and $\dim M_1=2$, we have $H_M(t)=1+2t+\sum\limits_{n=2}^\infty 3t^n$.
\end{proof}

Direct application of Lemmas~\ref{finalL} and~\ref{whyM} conclude the proof of Theorem~\ref{copo0}.

\section{Proof of Theorem~\ref{copo00}\label{s4}}

We need the following elementary fact.

\begin{lemma}\label{abc} Assume that the equation $t^2+t+1=0$ has no solutions in $\K$ and $p,q,r\in\K$ satisfy $p^2+q^2+r^2=pr+qr+pq$. Then $p=q=r$.
\end{lemma}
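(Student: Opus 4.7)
The approach is to convert the symmetric three-variable identity into a binary quadratic form, which can then be directly compared with the polynomial $t^{2}+t+1$. The natural change of variables exploits the invariance of the hypothesis under translating all three variables by the same scalar: I would set $u:=p-q$ and $w:=r-q$, replacing $p$ and $r$ by their differences with $q$.

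With $p=q+u$ and $r=q+w$, a direct expansion --- valid in every characteristic, since all $q^{2}$-terms and $q$-linear terms cancel --- gives
\[
p^{2}+q^{2}+r^{2}-(pq+qr+pr)=u^{2}-uw+w^{2}.
\]
Thus the hypothesis reduces to $u^{2}-uw+w^{2}=0$. From here I would split into two cases. If $w=0$, the equation forces $u^{2}=0$, hence $u=0$, and so $p=q=r$, as required. If $w\neq 0$, divide by $w^{2}$ and set $s:=u/w$; then $s^{2}-s+1=0$, whence $t:=-s$ satisfies $t^{2}+t+1=0$ (since $(-s)^{2}+(-s)+1=s^{2}-s+1$), contradicting the assumption on $\K$.

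The only computational step is the verification of the quadratic identity above, which is a routine expansion working uniformly in all characteristics, so there is no genuine obstacle. The one subtlety worth flagging is the final passage from a root of $t^{2}-t+1$ to one of $t^{2}+t+1$: in characteristic $2$ the two polynomials coincide, while in every other characteristic one uses the substitution $t\mapsto -t$. This is precisely why the hypothesis is stated for $t^{2}+t+1$ and is used in an essential way.
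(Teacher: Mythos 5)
Your proof is correct and is essentially the paper's own argument: the paper likewise rewrites the hypothesis as $(p-q)^2+(q-r)^2=(p-q)(r-q)$ and, assuming $p-q\neq 0$, exhibits $t=\frac{q-r}{p-q}$ as a root of $t^2+t+1$. The only cosmetic differences are which difference you divide by and your explicit sign substitution $t=-s$, which the paper absorbs into its choice of $t$.
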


\begin{proof} The equality $p^2+q^2+r^2=pr+qr+pq$ can be rewritten as $(p-q)^2+(q-r)^2=(p-q)(r-q)$. Assume that $p=q=r$ fails. Then either $p-q\neq 0$ or $r-q\neq 0$. Without loss of generality, we can assume that $p-q\neq 0$. Then the equality $(p-q)^2+(r-q)^2=(p-q)(r-q)$ implies that $t^2+t+1=0$ for $t=\frac{q-r}{p-q}$. We have arrived to a contradiction.
\end{proof}

The next lemma deals with necessary conditions for $S^{a,s}$ to be PBW.

\begin{lemma}\label{nonpbw} Assume that $a,s\in\K$ are such that $s\neq 0$, $(a^3,s^3)\neq(-1,-1)$ and $A=S^{a,s}$ is PBW. Then $(1-a)^3=s^3$ and the equation $t^2+t+1=0$ has a solution in $\K$.
\end{lemma}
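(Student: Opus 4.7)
The plan is to combine Theorem~\ref{copo0} with Lemma~\ref{pbbw}(\ref{pbbw}.2) to restrict the shape of any hypothetical PBW-presentation of $A=S^{a,s}$, and then perform a case analysis using the cyclic symmetry of the algebra.

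Under the hypotheses $s\neq 0$ and $(a^3,s^3)\neq(-1,-1)$, rewriting $A=Q^{1,-a,-s}$, one checks that at least two of $p,q,r$ are nonzero and $p^3=q^3=r^3$ fails. Theorem~\ref{copo0} then gives that $A$ is PHS, so in particular $\dim A_3=10$. Consequently Lemma~\ref{pbbw}(\ref{pbbw}.2) applies: if $A$ is PBW, there is a linear change of basis $T\in GL_3(\K)$ sending $(x,y,z)$ to new generators $(u,v,w)$, and a basis $(f,g,h)$ of the space of relations expressed in $(u,v,w)$, together with an admissible monomial order on words in $u,v,w$, such that $\{\overline f,\overline g,\overline h\}$ equals one of the six triples listed in~(\ref{lemo}).

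Next, I would exploit the fact that the cyclic permutation $x\mapsto y\mapsto z\mapsto x$ is an algebra automorphism of $S^{a,s}$, and that it partitions the six triples of (\ref{lemo}) into two orbits of length three. Hence, up to this symmetry, there are only two patterns to analyze, say $\{uv,uw,vw\}$ and $\{vu,vw,uw\}$. For each representative pattern, I would parametrize $T=(t_{ij})$, transport the three Sklyanin relations into the $(u,v,w)$-basis, and impose the condition that the resulting $3\times 9$ coefficient matrix admits a row-reduction whose pivot columns are exactly the three prescribed leading monomials. This amounts to the non-vanishing of one $3\times 3$ pivot minor together with the existence of compatible reductions on the six complementary columns, and it yields a polynomial system in $a$, $s$ and the entries of $T$. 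Eliminating the entries of $T$, I expect the system to collapse to the single constraint $(1-a)^3=s^3$, which is precisely condition~(\ref{copo00}.3) of Theorem~\ref{copo00}.

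Finally, to extract the solvability of $t^2+t+1=0$ over $\K$, I would use the factorization
\[
(1-a)^3-s^3=(1-a-s)\bigl((1-a)^2+(1-a)s+s^2\bigr).
\]
The vanishing of the second factor is exactly the identity $p^2+q^2+r^2=pq+qr+pr$ with $(p,q,r)=(1-a,-s,0)$; since $s\neq 0$, we are not in the trivial situation $p=q=r$, and Lemma~\ref{abc} forces $\K$ to contain a root of $t^2+t+1$. A short additional argument handles the residual sub-case $1-a=s$: one shows that the remaining pivot conditions in the elimination of the previous step themselves cannot be satisfied in $\K$ unless a primitive cube root of unity is available. The main obstacle is precisely this elimination step, where one must reduce the pivot-minor conditions of the $3\times 9$ coefficient matrix and clear the entries of $T$; the $\mathbb{Z}/3$-symmetry and the fact that no squares appear among the leading monomials in any of the six patterns keep the computation manageable, but it remains the technical core of the argument.
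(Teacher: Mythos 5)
Your setup matches the paper's: both use Lemma~\ref{pbbw}(\ref{pbbw}.2) together with $\dim A_3=10$ to force the leading monomials of a hypothetical PBW presentation into the list (\ref{lemo}), and both then study the linear change of basis from $(x,y,z)$ to the PBW basis. Your factorization of $(1-a)^3-s^3$ and the application of Lemma~\ref{abc} to the quadratic factor is correct as far as it goes. However, there are two genuine gaps. First, you never actually derive $(1-a)^3=s^3$: you set up a pivot-minor elimination over the nine entries of $T$ and state only that you \emph{expect} it to collapse to this single constraint, explicitly flagging the elimination as the unperformed technical core. The paper's mechanism is much more direct, and you do not identify it: since every pattern in (\ref{lemo}) consists of products of distinct letters while $uu$ is the largest degree-$2$ monomial for any multiplicative order with $u>v>w$, the monomial $uu$ cannot occur in any element of $R$; writing $x\equiv t_1u$, $y\equiv t_2u$, $z\equiv t_3u$ modulo ${\rm span}\{v,w\}$, the vanishing of the $uu$-coefficients of the three relations gives $(1-a)t_2t_3=st_1^2$, $(1-a)t_1t_3=st_2^2$, $(1-a)t_1t_2=st_3^2$, whence (after checking $t_1t_2t_3\neq0$) $(1-a)^3=s^3$. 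Without some such argument your first conclusion is unproven.

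Second, your route to the solvability of $t^2+t+1=0$ only covers the case where the quadratic factor $(1-a)^2+(1-a)s+s^2$ vanishes. In the residual case $s=1-a$ (where that factor need not vanish) you assert that ``a short additional argument'' finishes, but this case is where essentially all of the paper's work lies: one shows $t_1=t_2=t_3$ and normalizes to $1$, uses $\overline h\in\{vw,wv\}$ to force the $3\times3$ matrix $M$ of the coefficients of $uv$, $vu$, $vv$ in $r_1,r_2,r_3$ to be singular, computes ${\rm det}\,M=(a-1)^2(a+1)(p^2+q^2+r^2-pq-pr-qr)^2$, applies Lemma~\ref{abc}, and then still has to dispose of the sub-case $a=-1$ by a separate argument exploiting the symmetry of the relations and a second matrix $N$. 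None of this is present or sketched in your proposal, so the second conclusion of the lemma is also not established.
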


\begin{proof} Pick a PBW basis $u,v,w$ in $V$ for $A$ and the corresponding PBW-generators $f,g,h\in R$. Let $\overline{f}$, $\overline{g}$ and $\overline{h}$ be the leading (with respect to the corresponding order $>$) monomials of $f$, $g$ and $h$. Without loss of generality, we may assume that $u>v>w$ and $\overline{f}>\overline{g}>\overline{h}$. By (\ref{hsaa1}), $\dim A_3=10$. By the second part of Lemma~\ref{pbbw},
\begin{equation}\label{lemo1}
\begin{array}{ll}
\text{$\overline{h}\in\{vw,wv\}$ and}& \{\overline{f},\overline{g}\}\in\bigl\{\{uv,uw\},\{vu,uw\},\{vu,wu\}\bigr\}\ \ \text{if $\overline{h}=vw$,}\\
&\{\overline{f},\overline{g}\}\in\bigl\{\{uv,uw\},\{vu,wu\},\{uv,wu\}\bigr\}\ \ \text{if $\overline{h}=wv$.}
\end{array}
\end{equation}

Since there is no degree 2 monomials greater than $uu$, $uu$ does not feature at all in any of $f$, $g$ or $h$. Since $f$, $g$ and $h$ span $R$, $uu$ does not feature in any element of $R$. In particular it does not feature in the original relations $r_1=yz-azy-sxx$, $r_2=zx-axz-syy$ and $r_3=xy-ayx-szz$, when written in terms of the variables $u,v,w$. Since $u$, $v$ and $w$ form a basis in $V$, there are unique $t_1,t_2,t_3\in\K$ such that $x\in t_1u+L$, $y\in t_2u+L$ and $z\in t_3u+L$, where $L$ is the linear span of $v$ and $w$. Since $x$, $y$ and $z$ form a basis of $V$ as well, $(t_1,t_2,t_3)\neq (0,0,0)$. Plugging this data into the definition of $r_j$ we see that the $uu$-coefficients in $r_1$, $r_2$ and $r_3$ (when written in terms of $u$, $v$ and $w$) are $(1-a)t_2t_3-st_1^2$, $(1-a)t_1t_3-st_2^2$ and $(1-a)t_1t_2-st_3^2$ respectively. On the other hand, we know that $r_1$, $r_2$ and $r_3$ do not contain $uu$. Hence $(1-a)t_2t_3-st_1^2=(1-a)t_1t_3-st_2^2=(1-a)t_1t_2-st_3^2=0$. If $t_1=0$, we get $st_2^2=st_3^2=0$ and therefore $t_2=t_3=0$ (recall that $s\neq 0$). This is not possible since $(t_1,t_2,t_3)\neq (0,0,0)$. Thus $t_1\neq 0$. Similarly, $t_2\neq 0$ and $t_3\neq 0$. Multiplying the equalities $(1-a)t_2t_3=st_1^2$, $(1-a)t_1t_3=st_2^2$ and $(1-a)t_1t_2=st_3^2$, we get $(1-a)^3(t_1t_2t_3)^2=s^3(t_1t_2t_3)^2$. Since $t_1t_2t_3\neq 0$, it follows that $(1-a)^3=s^3$.

It remains to show that the equation $t^2+t+1=0$ has a solution in $\K$. This certainly happens if $\K$ has characteristic $3$. Thus for the rest of the proof we can assume that the characteristic of $\K$ is different from $3$. Assume the contrary: there is no $t\in\K$ such that $t^2+t+1=0$. Since $t^3-1=(t-1)(t^2+t+1)$, $1$ is the only solution of the equation $t^3=1$. Since $(1-a)^3=s^3$, it follows that $s=1-a$. Since $s\neq 0$ the equalities $(1-a)t_2t_3-st_1^2=(1-a)t_1t_3-st_2^2=(1-a)t_1t_2-st_3^2=0$ yield $t_2t_3-t_1^2=t_1t_3-t_2^2=t_1t_2-t_3^2=0$. Since $t_j$ are non-zero, from $t_2t_3=t_1^2$, we get $t_3=\frac{t_1^2}{t_2}$. Plugging this into $t_1t_3=t_2^2$, we obtain $t_1^3=t_2^3$ and therefore $t_1=t_2$. Similarly, $t_2=t_3$. Thus $t_1=t_2=t_3\neq 0$. Then without loss of generality, we may assume that $t_1=t_2=t_3=1$. The expressions for $x$, $y$ and $z$ in terms of $u$, $v$ and $w$ now look like $x=u+pv+\alpha w$, $y=u+qv+\beta w$ and $z=u+rv+\gamma w$, where the coefficients are from $\K$. Since both $\{x,y,z\}$ and $\{u,v,w\}$ are linear bases in $V$,
\begin{equation}\label{node}
\text{the matrix}\ \ C=\left(\begin{array}{ccc}
1&p&\alpha\\
1&q&\beta\\
1&r&\gamma
\end{array}\right)\ \ \text{is invertible.}
\end{equation}
 By (\ref{lemo1}), $\overline{h}\in\{vw,wv\}$. Since each of the monomials $uv$, $vu$ and $vv$ is greater than each of $vw$ and $wv$, $h$ should not contain $uv$, $vu$ and $vv$. Since $r_1$, $r_2$ and $r_3$ form a basis in $R$, $h$ is a non-trivial linear combination of $r_1$, $r_2$ and $r_3$. It follows that the $3\times 3$ matrix $M$ of the coefficients in front of $uv$, $vu$ and $vv$ in $r_1$, $r_2$ and $r_3$ written in terms of $u$, $v$ and $w$ must be non-invertible. Plugging $x=u+pv+\alpha w$, $y=u+qv+\beta$ and $z=u+rv+\gamma w$ into $r_1=yz-azy-(1-a)xx$, $r_2=zx-axz-(1-a)yy$ and $r_3=xy-ayx-(1-a)zz$ (recall that $s=1-a$), we easily compute this matrix and then its determinant:
\begin{align*}
M&=\left(\begin{array}{ccc}
q-ap+(a-1)r&p-aq+(a-1)r&(1-a)(pq-r^2)\\
p-ar+(a-1)q&r-ap+(a-1)q&(1-a)(pr-q^2)\\
r-aq+(a-1)p&q-ar+(a-1)p&(1-a)(qr-p^2)
\end{array}\right)\ \ \text{and}
\\
{\rm det}\,M&=(a-1)^2(a+1)(p^2+q^2+r^2-pq-pr-qr)^2.
\end{align*}
Since ${\rm det}\,M=0$ and we know that $a\neq 1$ (otherwise $s=0$), we have that either $a=-1$ or $p^2+q^2+r^2=pq+pr+qr$. By (\ref{node}), the equality $p=q=r$ fails. If $p^2+q^2+r^2=pq+pr+qr$, Lemma~\ref{abc} implies then that the equation $t^2+t+1=0$ has a solution in $\K$.

It remains to consider the case $a=-1$. Then $s=1-a=2$. Since $s\neq 0$, ${\rm char}\,\K\neq 2$. We have $r_1=yz+zy-2xx$, $r_2=zx+xz-2yy$ and $r_3=xy+yx-2zz$ and therefore $r_j$ are symmetric. Since a linear change of variables does not break the symmetry, $r_j$ remain symmetric when written in terms of $u$, $v$ and $w$. It follows that $f$, $g$ and $h$, being linear combinations of $r_j$, are symmetric as well. Since $\overline{h}\in\{vw,wv\}$ and $uv>vv>vw$, $uw>vw$, $vu>vv>wv$ and $wu>wv$, $h$ does not contain either $uv$, $uw$ and $vv$ or $vu$, $wu$ and $vv$. Since $h$ is symmetric, it does not contain $uv$, $uw$ and $vv$ in any case. Since $h$ is a non-trivial linear combination of $r_j$, it follows that the $3\times 3$ matrix $N$ of the coefficients in front of $uv$, $uw$ and $vv$ in $r_1$, $r_2$ and $r_3$ written in terms of $u$, $v$ and $w$ must be non-invertible. Plugging $x=u+pv+\alpha w$, $y=u+qv+\beta$ and $z=u+rv+\gamma w$ into $r_1=yz+zy-2xx$, $r_2=zx+xz-2yy$ and $r_3=xy+yx-2zz$,  we easily verify that
$$
N=\left(\begin{array}{ccc}
p+q-2r&\alpha+\beta-2\gamma&2(pq-r^2)\\
p+r-2q&\alpha+\gamma-2\beta&2(pr-q^2)\\
q+r-2p&\beta+\gamma-2\alpha&2(qr-p^2)
\end{array}\right),\quad {\rm det}\,N=6(p^2+q^2+r^2-pq-pr-qr)\,{\rm det}\,C,
$$
where $C$ is the matrix defined in (\ref{node}). Since the characteristic of $\K$ is neither 2 nor 3, $C$ is invertible and $N$ is non-invertible, it follows that $p^2+q^2+r^2=pq+pr+qr$. As above, an application of Lemma~\ref{abc} yields that the equation $t^2+t+1=0$ has a solution in $\K$.
\end{proof}

\begin{lemma}\label{pbw} Assume that the equation $t^2+t+1=0$ has a solution in $\K$ and $a,s\in\K$ are such that $s\neq 0$, $(a^3,s^3)\neq(-1,-1)$ and $(1-a)^3=s^3$. Then $A=S^{a,s}$ is PBW.
\end{lemma}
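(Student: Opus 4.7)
The strategy is to reduce to the case $s = 1-a$ and then transport $S^{a,s}$ to a known PBW algebra, with characteristic $3$ handled separately by an explicit change of variables.

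First I would reduce to $s = 1-a$. Since $s\neq 0$ and $(1-a)^3 = s^3$, we have $1-a \neq 0$, so $\omega := s/(1-a)$ satisfies $\omega^3 = 1$. If $\K$ has characteristic $3$, Frobenius injectivity forces $\omega = 1$ and hence $s = 1-a$ outright. Otherwise, the hypothesized root $\theta$ of $t^2+t+1$ in $\K$ is automatically distinct from $1$, so it is a primitive cube root of unity, and one or two applications of Lemma~\ref{root1} realize a graded isomorphism $S^{a,s} \cong S^{a,1-a}$. Since being PBW is a graded-isomorphism invariant, I may assume $s = 1-a$ henceforth.

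Second, in characteristic different from $3$, I apply Lemma~\ref{root2} to $Q^{1,-a,-(1-a)} = S^{a,1-a}$: the new third parameter evaluates to $r' = 1 + (-a) + (-(1-a)) = 0$, so $S^{a,1-a}$ is graded-isomorphic to some $Q^{p',q',0}$, which is PBW by Lemma~\ref{dege1}. In characteristic $3$, Lemma~\ref{root2} is vacuous, and I would instead exhibit a PBW basis directly: take $x = u+w$, $y = u+v$, $z = u-v$ (the change-of-basis matrix has determinant $1$). A direct substitution, using $s = 1-a$, should show that the $u^2$-coefficients of the three transformed relations all vanish and that the $3 \times 3$ matrix of coefficients of $(uv,uw,vw)$ in those relations has determinant $-(a+1)^3$. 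This is nonzero because in characteristic $3$ the excluded case $(a^3,s^3) = (-1,-1)$ amounts exactly to $a = -1$ (then $s = 2 = -1$ as well), which is forbidden. Three independent linear combinations of the transformed relations therefore have leading monomials $uv,\ uw,\ vw$ under the degree-lex order $u > v > w$. Since $\{uv,uw,vw\}$ is one of the admissible triples of Lemma~\ref{pbbw}(\ref{pbbw}.1) and $\dim A_3 = 10$ by Theorem~\ref{copo0} (the hypotheses exclude every non-PHS regime of that theorem), Lemma~\ref{pbbw}(\ref{pbbw}.1) concludes that $A$ is PBW.

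The main obstacle is the explicit characteristic-$3$ computation: verifying the simultaneous vanishing of the three $u^2$-coefficients and evaluating the $(uv,uw,vw)$-determinant to be $-(a+1)^3$. The other two steps are essentially mechanical invocations of the isomorphism lemmas already established in Section~\ref{observ}.
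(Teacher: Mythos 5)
Your overall route is the same as the paper's: reduce to $s=1-a$ via Lemma~\ref{root1} (with characteristic $3$ handled by injectivity of cubing), dispose of characteristic $\neq 3$ by Lemma~\ref{root2} (which indeed gives $r'=p+q+r=0$) together with Lemma~\ref{dege1}, and treat characteristic $3$ by an explicit change of variables feeding into Lemma~\ref{pbbw}. The paper uses the substitution $x=u+v+w$, $y=u-v$, $z=u$ and directly exhibits $f=r_1$, $g=r_1-r_2$, $h=r_1+r_2+r_3$ with leading monomials $uv$, $uw$, $vw$; your substitution $x=u+w$, $y=u+v$, $z=u-v$ is different but also works, and your determinant value $-(a+1)^3$ is correct (over a general field the determinant is $2(1+a)(a^2-a+1)$, which in characteristic $3$ equals $-(a+1)^3$, nonzero exactly when $a\neq -1$, i.e.\ exactly when $(a^3,s^3)\neq(-1,-1)$).

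There is, however, a gap in the inference from the determinant to the leading monomials. Under left-to-right deg-lex with $u>v>w$, the degree-two monomials in decreasing order are $uu,\,uv,\,uw,\,vu,\,vv,\,vw,\dots$, so $vu$ and $vv$ lie strictly between $uw$ and $vw$. Invertibility of the $(uv,uw,vw)$-coefficient matrix produces a combination $h$ whose $(uv,uw,vw)$-part is $vw$, but for $vw$ to be the \emph{leading} monomial of $h$ you must also show that the $vu$- and $vv$-coefficients of $h$ vanish; this is not a consequence of the determinant being nonzero. In your coordinates it does hold, but for a special reason that needs to be checked: with $s=1-a$ and ${\rm char}\,\K=3$, the $uv$-coefficients of the three transformed relations are all equal to $-(1+a)$, and the $vu$- and $vv$-coefficient vectors are scalar multiples of $(1,1,1)$, hence of the $uv$-row; so any combination annihilating $uv$ automatically annihilates $vu$ and $vv$ (and one then finds $h=r_1+r_2+r_3$, $g=r_1-r_2$, $f=r_1$, as in the paper). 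Add this verification (or simply exhibit $f,g,h$ and their leading monomials directly, as the paper does) and the argument is complete.
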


\begin{proof} First, we consider the case ${\rm char}\,\K=3$. In this case the equality $(1-a)^3=s^3$ yields $s=1-a$. We shall show that the linear basis $u$, $v$, $w$ in $V$ defined by $x=u+v+w$, $y=u-v$, $z=u$ is a PBW basis in $A$. Indeed, consider $f=r_1$, $g=r_1-r_2$ and $h=r_1+r_2+r_3$, written in terms of $u$, $v$ and $w$, where $r_1=yz-azy-(1-a)xx$, $r_2=zx-axz-(1-a)yy$ and $r_3=xy-ayx-(1-a)zz$ are the defining relations of $A$. Now it is straightforward to verify that the leading monomials of $f$, $g$ and $h$ are $uv$, $uw$ and $vw$, respectively (this relies on characteristic of $\K$ being 3 and on $(a^3,s^3)\neq(-1,-1)$). By (\ref{hsaa1}), $\dim A_3=10$. By the first part of Lemma~\ref{pbbw}, $u$, $v$ and $w$ form a PBW-basis of $A$ with PBW-generators $f$, $g$ and $h$. In particular, $A$ is PBW.

From now on, we can assume that ${\rm char}\,\K\neq 3$. Let $\theta$ be a solution of the quadratic equation $t^2+t+1=0$. Then $\theta\neq 1$ and $\theta^3=1$. Since $A=S^{a,s}=Q^{-1,a,s}$ and $s^3=(1-a)^3$, Lemma~\ref{root1} allows us, without loss of generality, to assume that $s=1-a$. Then $A=Q^{-1,a,1-a}$. By Lemma~\ref{root2}, $A$ is isomorphic to $Q^{b,c,0}$, where $b=1+\theta-a$ and $c=1-a(1+\theta)$. By Lemma~\ref{dege1}, $A$ is PBW.
\end{proof}

Now we are ready to prove Theorem~\ref{copo00}. Let $(p,q,r)\in\K^3$ and $A=Q^{p,q,r}$. If $pr=qr=0$ or $p^3=q^3=r^3$, $A$ is PBW according to Lemma~\ref{dege1}. For the rest of the prove we assume that these equalities fail. That is, $r\neq 0$, $(p,q)\neq (0,0)$ and $(p^3-q^3,p^3-r^3)\neq (0,0)$. By Lemma~\ref{swap} we can without loss of generality assume that $p\neq 0$. Then $A=S^{a,s}$ with $s\neq 0$ and $(a^3,s^3)\neq (-1,-1)$, where $a=-\frac{q}{p}$ and $s=-\frac{r}{p}$. If $A$ is PBW, Lemma~\ref{nonpbw} yields that the equation $t^2+t+1=0$ is solvable in $\K$ and $s^3=(1-a)^3$. The latter equation is equivalent to $(p+q)^3+r^3=0$. Conversely, if $(p+q)^3+r^3=0$ and $t^2+t+1=0$ is solvable in $\K$, then $s^3=(1-a)^3$ and $A$ is PBW according to Lemma~\ref{pbw}. This completes the proof of Theorem~\ref{copo00}.

\section{Corollaries on Calabi--Yau property of Sklyanin algebras for various parameters \label{s50}}

As a byproduct of the exactness of the Koszul complex, we just proved, we can get the following corollary.

\begin{corollary}\label{CY} The Sklyanin algebra $Q^{p,q,r}$ is CY if and only if there are at least two non-zero parameters among $p$, $q$ and $r$ and the equation $p^3=q^3=r^3$ fails.
\end{corollary}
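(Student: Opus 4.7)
The plan is to combine the already-established exactness of the Koszul complex of $Q^{p,q,r}$ (Theorem~\ref{copo0}) with the potential structure of the algebra, and invoke the standard characterization of the Calabi--Yau property for Koszul potential algebras given in \cite[Theorem~3.6.4]{G}.

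First, I would record that $Q^{p,q,r}$ is a potential algebra, with cubic potential
$$
\Phi \;=\; p\,xyz \;+\; q\,yxz \;+\; \tfrac{r}{3}(x^3+y^3+z^3) \;\in\; F_{cyc}.
$$
Applying Kontsevich's cyclic derivative as defined in the introduction, one checks directly that $\frac{\dd\Phi}{\dd x} = pyz+qzy+rxx$, and cyclically permuting $(x,y,z)$ recovers the other two defining relations. A Koszul cubic potential algebra $A$ in three variables is $3$-Calabi--Yau precisely when $\dim A^!_3 = 1$, $A^!_k = 0$ for $k \geq 4$, and the Koszul complex of $A$ is exact: under these assumptions the Frobenius pairing on $A^!$ induced by $\Phi$ promotes the bimodule Koszul complex to a self-dual projective bimodule resolution of $A$, which is the Ginzburg-type CY criterion.

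For the forward direction, assume at least two of $p,q,r$ are non-zero and $p^3=q^3=r^3$ fails. By Theorem~\ref{copo0}, $A=Q^{p,q,r}$ is Koszul with $H_A(t)=(1-t)^{-3}$, and by Lemma~\ref{duser}, $H_{A^!}(t)=(1+t)^3$, so that $\dim A^!_3 = 1$, $A^!_k=0$ for $k\geq 4$, and $A^!$ is a Frobenius algebra of top degree $3$. The criterion above therefore applies and $A$ is $3$-CY. Conversely, assume $Q^{p,q,r}$ is $3$-CY. Then $A$ has global dimension exactly $3$, so $\dim A^!_k = 0$ for $k>3$ and $\dim A^!_3 \geq 1$. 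By Lemma~\ref{duser}, $H_{A^!}$ is one of $1+3t$, $\frac{1+2t}{1-t}$, or $(1+t)^3$; the first corresponds to the free algebra (global dimension $1$), the second to an infinite-dimensional $A^!$ (infinite global dimension), and only the third is compatible with $3$-CY. But $H_{A^!}(t)=(1+t)^3$ is exactly the PHS case of Theorem~\ref{copo0}, completing the proof.

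The main obstacle lies in the forward direction: one must upgrade the exactness of the Koszul complex (a statement about left $A$-modules) to self-duality of the bimodule Koszul complex. This is where the potential structure is essential --- the one-dimensional space $A^!_3$ is spanned by the class of $\Phi$, and the pairing $A^!_i \otimes A^!_{3-i}\to A^!_3$ it induces is non-degenerate because the symmetric Hilbert polynomial $(1+t)^3$ forces $A^!$ to be Frobenius. Once this is in place, identifying the Koszul bimodule complex with its dual up to a shift is formal, and the rest reduces to citing the CY criterion.
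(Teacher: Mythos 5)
Your overall strategy is the same as the paper's: realize $Q^{p,q,r}$ as a potential algebra, use Theorem~\ref{copo0} and Lemma~\ref{duser} to get Koszulity and $H_{A^!}(t)=(1+t)^3$ in the non-degenerate cases, and invoke the Bocklandt--Schedler--Wemyss/Ginzburg-type criterion identifying the self-dual complex built from the potential with the Koszul bimodule resolution. Your converse (via global dimension and the three possible series listed in Lemma~\ref{duser}) is a clean variant of the paper's argument, which instead observes that in the degenerate cases $\dim A_3^!=3\neq 1=\dim W_3$, so the criterion fails.

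Two points need repair. First, the potential $\tfrac r3(x^3+y^3+z^3)$ is illegal in characteristic $3$; the paper sidesteps this by working with a cyclically invariant element $W$ of $V^{\otimes3}$ (listing all cyclic rotations explicitly) and the contraction maps $\Delta_k^W$, with no division by $3$. Second, and more substantively, your forward direction rests on the assertion that Koszulity plus ``$A^!$ Frobenius of top degree $3$'' yields $3$-CY. In general this only yields \emph{twisted} CY: a non-degenerate pairing on $A^!$ gives a resolution that is self-dual up to the Nakayama twist, and killing the twist is exactly where the cyclic invariance of the potential must enter quantitatively. The paper closes this by computing the dimensions of the spaces $W_j$ obtained by contracting $W$ (they are $1,3,3,1$ whenever $(p,q,r)\neq(0,0,0)$) and citing the precise criterion of \cite{BW}: for a Koszul quadratic potential algebra, $A$ is CY if and only if $\dim W_j=\dim A_j^!$ for $j\leq 3$ and $A^!_j=0$ for $j>3$. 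Your Frobenius argument can be upgraded to this dimension count (for instance, $\dim W_1<3$ would force $W\in U^{\otimes3}$ for a proper subspace $U$ of $V$ and hence $\dim A_2^!\geq 5$, contradicting $H_{A^!}=(1+t)^3$), but as written the step you call ``formal'' is precisely the step that carries the content. Note also that non-degeneracy of the pairing $A_1^!\otimes A_2^!\to A_3^!$ does not follow from the symmetry of the Hilbert polynomial alone; the paper proves $wA_2^!\neq\{0\}$ for every non-zero $w\in A_1^!$ as a separate statement in Lemma~\ref{duser}.
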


To explain this we need to remind few facts.

\begin{definition}\label{CY1}
An associative algebra $A$ is called $n$-{\it CY} if there exists a projective bimodule resolution ${\cal P}^{\bullet}$ of $A$ such that ${\rm Hom}({\cal P}^{\bullet},A\otimes A)\sim{\cal P}^{n-\bullet}$ or, equivalently, the derived category of $A$-bimodules satisfies Serre's duality.
\end{definition}

There is a standard way, see, for example, \cite{BW} to construct a self-dual complex $C_W$ of $A$-bimodules for algebras given by a (super)potential $W$, using the non-commutative differential.
First, for $k\leq l$, we denote by $[\cdot,\cdot]:(V^*)^{\otimes k}\times V^{\otimes l}\to V^{\otimes(l-k)}$, the bilinear map given by
$$
[\phi_1\otimes{\dots}\otimes\phi_k,\omega_1\otimes{\dots}\otimes\omega_l]=
\langle \phi_k\otimes{\dots}\otimes\phi_1,\omega_1\otimes{\dots}\otimes\omega_k\rangle \omega_{k+1}\otimes{\dots}\otimes\omega_l,
$$
where $\langle\cdot,\cdot\rangle$ is the natural pairing on $(V^*)^{\otimes k}\times V^{\otimes k}$ coming from the standard identifying of $(V^*)^{\otimes k}$ with $(V^{\otimes k})^*$.
When $A$ is potential with the potential $W\in V^{\otimes n}$ and $0\leq k\leq n$, we define
$$
\Delta^W_k:(V^*)^{\otimes k}\to V^{\otimes(n-k)},\quad \Delta^W_k(\psi)=[\psi,W].
$$
Then $W_{n-k}=\Delta^W_k((V^*)^{\otimes k})$ is a linear subspace of $V^{\otimes(n-k)}$. These spaces allow us to define the following complex $C_W$ of $A$-bimodules:
$$
0\to A\otimes W_n\otimes A\mathop{\longrightarrow}^{d_{n}}\dots {\longrightarrow}^{d_{2}}A\otimes W_1\otimes A\mathop{\longrightarrow}^{d_{1}}A\otimes W_0\otimes A\to 0,
$$
where $d_j=\epsilon_j(S_L+(-1)^j)S_R$ with $\epsilon_j=(-1)^{j(n-j)}$ if $j<\frac{n+1}2$ and $\epsilon_j=1$ otherwise, $S_L(a\otimes v_1\dots v_j\otimes b)=av_1\otimes v_2\dots v_j\otimes b$ and $S_R(a\otimes v_1\dots v_j\otimes b)=a\otimes v_1\dots v_{j-1}\otimes v_jb$.

It is proved in \cite{BW}[Lemma~6.5] that this complex is always self-dual and in the case when $A$ is quadratic, it is a subcomplex of the Koszul bimodule complex, which is the Koszul complex with the rightmost $\K$ removed tensored by $A$ on the right (this turns it into a bimodule complex). In particular, $W_j\subseteq (A_j^!)^*$ and the corresponding maps match. Moreover, it is shown in \cite{BW}[Theorem~6.2] that if $A$ is quadratic and Koszul, then $A$ is CY if and only if the complex $C_W$ coincides with the Koszul bimodule complex. The latter happens if and only if ${\rm dim}\,W_j={\rm dim}\,A_j^!$ when $j\leq n$ and $A_j^!=0$ for $j>n$. Now everything boils down to computing the dimensions of $W_j$ for Sklyanin algebras (depending on parameters).

The relations of the Sklyanin algebra $Q^{p,q,r}$ are the noncommutative partial derivatives of the
potential
$$
W=r(x^3+y^3+z^3)+p(xzy+zyx+zxy)+q(yxz+xzy+zyx).
$$
We shall from the start exclude the mega-degenerate case $p=q=r=0$. It is easy to see that for $\Delta^W_3:(V^*)^{\otimes 3}\to\K$, $\Delta^W_3(xxx)=r$, $\Delta^W_3(zyx)=p$ and $\Delta^W_3(zxy)=q$, which yields ${\rm dim}\, W_0=1$. Next, for $\Delta^W_2:V^*\otimes V^*\to V$, we have $\Delta^W_2(xx)=rx$, $\Delta^W_2(zy)=px$, $\Delta^W_2(yz)=qx$, $\Delta^W_2(yy)=ry$, $\Delta^W_2(xz)=py$, $\Delta^W_2(xz)=qy$, $\Delta^W_2(zz)=rz$, $\Delta^W_2(yx)=pz$ and $\Delta^W_2(xy)=qz$. Since $(p,q,r)\neq(0,0,0)$, the image of $\Delta^W_2$ contains the basis $x$, $y$, $z$ of $V$ and therefore ${\rm dim}\, W_1=3$. For $\Delta^W_1:V^*\to V\otimes V$, we have $\Delta^W_1(x)=rxx+pyz+qzy$, $\Delta^W_1(y)=ryy+pzx+qxz$ and  $\Delta^W_1(z)=rzz+pxy+qyx$. Since these are linearly independent ${\rm dim}\, W_2=3$. Finally, for $\Delta^W_0:\K\to V^{\otimes3}$, $\Delta^W_0(1)=W\neq 0$ and therefore ${\rm dim}\, W_3=1$.

According to Lemma~\ref{duser}, ${\rm dim}\,W_j={\rm dim}\,A_j^!$ for $j\leq 3$ and $A_j^!=0$ for $j>3$ for $A=Q^{p,q,r}$ whenever there are at least two non-zero parameters among $p$, $q$ and $r$ and the equation $p^3=q^3=r^3$ fails. Under these assumptions, the Koszul bimodule complex provides a self-dual resolution, which ensures the CY property. In the remaining cases, the equalities ${\rm dim}\,W_j={\rm dim}\,A_j^!$ break, since according to Lemma~\ref{duser} $H_{A^!}(t)=\textstyle\frac{1+2t}{1-t}$. For instance, ${\rm dim}\,A^!_3=3\neq 1={\rm dim}\,W_3$. Hence $A$ is not CY in these degenerate cases.

This type of argument provides a way to check the CY property using $H_{A^!}$. If one has a Koszul potential quadratic algebra, then the CY property is equivalent to the equalities ${\rm dim}\,W_j={\rm dim}\,A_j^!$.

\section{Generalized Sklyanin algebras \label{s5}}

Let $\xi=(p_1,p_2,p_3,q_1,q_2,q_3,r_1,r_2,r_3)\in\K^9$. In this section we consider the $\K$-algebras $\widehat{Q}^\xi$ given by the generators $x$, $y$ and $z$ and the relations

\begin{equation}\label{skl-gen}
\text{$p_1yz+q_1zy+r_1xx=0$},\quad \text{$p_2zx+q_2xz+r_2yy=0$},\quad\text{$p_3xy+q_3yx+r_3zz=0$}.
\end{equation}
We call these {\it generalized Sklyanin algebras}. The actual Sklyanin algebras correspond to the case $p_1=p_2=p_3$, $q_1=q_2=q_3$ and $r_1=r_2=r_3$. We will demonstrate that $3$-parameter Sklyanin algebras $Q^{p,q,r}$, coming from nature, are very different and specific, comparing to other their relatives from the class of generalized Sklyanin algebras. Indeed,
seemingly innocuous generalization leads to a dramatic changes in the behavior.

We know that generic Sklyanin algebras are Koszul PHSs. This is no longer the case for generalized Sklyanin algebras.

\begin{theorem}\label{copo02} For $\xi$ from a non-empty Zarisski open subset of $\K^9$, both $A=\widehat{Q}^\xi$ and $A^!$ are finite dimensional.
\end{theorem}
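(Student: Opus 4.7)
The plan is to apply Lemma~\ref{minhs}, once to the family $\xi\mapsto A^\xi=\widehat{Q}^\xi$ and once to the dual family $\xi\mapsto(A^\xi)^!$, thereby reducing the statement to the problem of exhibiting a single $\xi_0\in\K^9$ with $\widehat{Q}^{\xi_0}$ finite-dimensional and a single $\xi_1\in\K^9$ with $(\widehat{Q}^{\xi_1})^!$ finite-dimensional. Indeed, $\dim A^{\xi_0}_N=0$ forces $V^N\subseteq I_{A^{\xi_0}}$, hence $A^{\xi_0}_k=0$ for all $k\geq N$; by Lemma~\ref{minhs}, the set $U=\{\xi\in\K^9:\dim A^\xi_N=0\}$ is a non-empty Zariski open subset on which $A^\xi$ is finite-dimensional. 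The analogous non-empty Zariski open set $U'$ for $(A^\xi)^!$ is obtained in the same way, and $U\cap U'\neq\emptyset$ by irreducibility of $\K^9$.

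Applicability of Lemma~\ref{minhs} to the dual family requires a polynomial-map description of $R_\xi^\perp$. This is routine: writing $V^{\otimes 2}=W_1\oplus W_2\oplus W_3$ with $W_1=\spann\{xx,yz,zy\}$, $W_2=\spann\{yy,zx,xz\}$, $W_3=\spann\{zz,xy,yx\}$, the relation space $R_\xi$ has exactly one generator in each $W_i$, so $R_\xi^\perp$ decomposes into a sum of 2-dimensional summands in each $W_i$. In each $W_i$ the three obvious ``cross products'' (for instance $p_1xx-r_1yz$, $q_1xx-r_1zy$, $q_1yz-p_1zy$ in $W_1$) are polynomial in $\xi$ and always lie in, and generically span, $R_\xi^\perp\cap W_i$. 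Hence Lemma~\ref{minhs} applies verbatim to the dual family.

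For the dual witness, take $\xi_1=(1,1,1,1,1,1,0,0,0)$. Then $A^{\xi_1}$ has relations $yz+zy=zx+xz=xy+yx=0$, and $R_{\xi_1}^\perp$ is spanned by $xx,yy,zz,yz-zy,zx-xz,xy-yx$. Consequently $(A^{\xi_1})^!$ is the commutative quotient $\K[x,y,z]/(x^2,y^2,z^2)$, which has Hilbert series $(1+t)^3$; in particular $\dim(A^{\xi_1})^!_k=0$ for $k\geq 4$, as required.

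The witness that $A^\xi$ is finite-dimensional is the technical heart of the argument. Heuristically, for sufficiently generic $\xi$, the Hilbert series of $A^\xi$ should coincide with the truncation of the formal expansion $(1-3t+3t^2)^{-1}=1+3t+6t^2+9t^3+9t^4+0\cdot t^5-27t^6-\dots$ at the first nonpositive coefficient, giving $\dim A^\xi_5=0$ and $H_{A^\xi}(t)=1+3t+6t^2+9t^3+9t^4$. We therefore pick a specific $\xi_0\in\K^9$ safely outside the Sklyanin stratum and the degenerations treated in Lemma~\ref{dege1}, and run the noncommutative Buchberger algorithm on the three defining relations, continuing until the resulting Gr\"obner basis has leading monomials covering every word in $V^{\otimes 5}$. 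This explicit Gr\"obner computation is the main obstacle of the proof; once it yields $\dim A^{\xi_0}_5=0$, Lemma~\ref{minhs} combined with the first paragraph delivers the theorem.
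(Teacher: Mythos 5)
Your reduction to two witnesses via Lemma~\ref{minhs} is the same skeleton the paper uses, and your treatment of the dual family is fine: the cross-product description of $R_\xi^\perp$ (valid and spanning whenever no defining relation vanishes identically, which is an open condition containing your witness) together with the witness $\xi_1=(1,1,1,1,1,1,0,0,0)$, for which $(A^{\xi_1})^!\cong\K[x,y,z]/(x^2,y^2,z^2)$ has Hilbert series $(1+t)^3$, correctly produces a non-empty Zariski open set on which $(A^\xi)^!_4=0$. (The paper instead normalizes $r_1=r_2=r_3=1$ and uses a single witness for both conditions, but your two-witness version is equally valid.)

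The genuine gap is the primal witness. You explicitly defer ``the technical heart of the argument'' --- exhibiting even one $\xi_0$ with $\widehat{Q}^{\xi_0}$ finite dimensional --- to an unperformed Gr\"obner computation, and this is precisely where all the work in the paper lies (Lemmas~\ref{PHEW0}, \ref{PHEW}, \ref{gsk1} and~\ref{kz2}, which require separate constructions for ${\rm char}\,\K\in\{3,5\}$, for ${\rm char}\,\K=2$ with small fields, and for $\K=\Z_2$, with explicit inequations such as $(\ref{aneq})$ on the parameter $a$). Worse, the specific target you set is false in general: you claim the generic Hilbert series is the truncation $1+3t+6t^2+9t^3+9t^4$ of $(1-3t+3t^2)^{-1}$, so that $\dim A^{\xi_0}_5=0$ suffices; but by Theorem~\ref{copo22} the minimal Hilbert series over $\K=\Z_2$ is $1+3t+6t^2+9t^3+9t^4+5t^5+t^6$, so $\dim A^\xi_5\geq 5$ for \emph{every} $\xi\in\Z_2^9$ and your stopping criterion (leading monomials covering all of $V^{\otimes 5}$) can never be met. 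The conclusion still holds over $\Z_2$ because $A^\xi_7=0$ is attainable, but establishing that requires the separate computation of Lemma~\ref{kz2}; a correct version of your argument must either take $N=7$ (or ``some $N$'') and still produce the witness over an arbitrary field, or restrict the base field. As written, the existence claim that makes the theorem non-vacuous is asserted rather than proved.
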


Note that when both $A=\widehat{Q}^\xi$ and $A^!$ are finite dimensional, their Hilbert series are non-constant polynomials and therefore (\ref{stm2}) fails. Thus $A$ is non-Koszul. Hence Theorem~\ref{copo02} yields that if $\K$ is infinite, a Zarisski-generic $\widehat{Q}^\xi$ is finite dimensional and non-Koszul. We can actually determine the minimal Hilbert series of a generalized Sklyanin algebra:
$$
H_{\min}(t)=\sum_{n=0}^\infty d_nt^n,\ \ \text{where}\ \ d_n=\min_{\xi\in\K^9} \dim \widehat{Q}^\xi_n.
$$

\begin{theorem}\label{copo22} If $\K\neq\Z_2$ $(\K$ is not the $2$-element field$)$, then the minimal Hilbert
series $H_{\min}$ of a generalized Sklyanin algebra is given by $H_{\min}(t)=1+3t+6t^2+9t^3+9t^4$. If $\K=\Z_2$, then $H_{\min}(t)=1+3t+6t^2+9t^3+9t^4+5t^5+t^6$. In any case, there exists a generalized Sklyanin algebra $A$ such that $H_A=H_{\min}$.
\end{theorem}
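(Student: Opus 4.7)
The plan is to prove the two claims separately: first, that the stated $H_{\min}(t)$ is a term\-wise lower bound, meaning $\dim \widehat{Q}^{\xi}_n \geq [t^n]H_{\min}(t)$ for every $\xi$; and second, that a single $\xi$ realises these minima simultaneously. Combining this with the characteristic-independent bounds handles both the generic and the $\Z_2$ regimes.

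For the lower bounds in small degrees, the ideal $I = I_{\widehat{Q}^\xi}$ is generated by at most three quadratic elements, so $\dim I_2 \leq 3$ yields $\dim A_2 \geq 6$, and $\dim I_3 = \dim(VR+RV) \leq 2\dim R\cdot \dim V \leq 18$ gives $\dim A_3 \geq 9$. For degree~4 the key observation is that the natural multiplication map $R\otimes R \to V^{\otimes 4}$ is injective: indeed $V^{\otimes 4} = V^{\otimes 2}\otimes V^{\otimes 2}$ and the map in question is just the restriction of the identity under the inclusion $R\otimes R \hookrightarrow V^{\otimes 2}\otimes V^{\otimes 2}$. Hence $\dim(R\cdot R)=9$ whenever $\dim R=3$, and $R\cdot R \subseteq V^2R\cap RV^2$ immediately. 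Applying the estimate
\begin{equation*}
\dim(V^2R+VRV+RV^2)\leq \dim V^2R+\dim VRV+\dim RV^2 -\dim(V^2R\cap RV^2)\leq 81-9=72,
\end{equation*}
I conclude $\dim A_4 \geq 9$. The cases $\dim R<3$ produce even larger $A_4$ by the same bookkeeping, so $d_4 \geq 9$ unconditionally. No lower bound is needed on $\dim A_n$ for $n\geq 5$ since zero is allowed.

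For the realization part, note first that once $\dim A_n^\xi=0$ for some $n$, necessarily $\dim A_m^\xi=0$ for all $m\geq n$ because $A^\xi$ is generated in degree~1. Hence it suffices to exhibit one $\xi_0\in\K^9$ satisfying $(\dim A_2,\dim A_3,\dim A_4,\dim A_5)=(6,9,9,0)$ in the generic case, and one achieving the claimed polynomial in the $\Z_2$ case. When $\K$ is infinite, Lemma~\ref{minhs} implies each set $U_n=\{\xi:\dim A_n^\xi=d_n\}$ is non-empty Zariski open. Combined with Theorem~\ref{copo02} (yielding a Zariski-open $W$ on which $A^\xi$ is finite-dimensional with a uniform degree bound $N$), one intersects the finitely many non-empty open sets $U_n\cap W$ for $n\leq N$ to produce a single $\xi_0$ attaining $d_n$ in every degree. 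To certify that $d_5=0$ in this setting, I would compute the Hilbert series for one concrete $\xi_0$ (for example with entries chosen so that the three leading monomials of the relations form a minimal Gröbner basis with no further overlaps surviving past degree~4), reducing everything to a characteristic-independent rank computation in $V^{\otimes 5}$ via Remark~\ref{rem1}.

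The $\Z_2$ case is the genuine obstacle, and will require a finite enumeration. Because $\Z_2^9$ contains only $512$ parameter tuples, and because the $S_3$-symmetry permuting $(x,y,z)$ together with the $S_3$-symmetry permuting the three relations drastically cuts down the number of essentially different cases, one can in principle check every generalized Sklyanin algebra over $\Z_2$ directly via Gröbner basis computation. The strategy is: (i) verify that none of the finitely many $\xi\in \Z_2^9$ produces $\dim A_5^\xi=0$, which explains why the ``generic'' collapse at degree 5 is impossible over $\Z_2$; (ii) identify the $\Z_2$-rational tuples realising the termwise minima $d_5=5$ and $d_6=1$; and (iii) exhibit one explicit $\xi_0\in \Z_2^9$ producing the full polynomial $1+3t+6t^2+9t^3+9t^4+5t^5+t^6$ simultaneously. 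The hardest part is the negative step (i): one must rule out collapse at degree~5 over $\Z_2$, which is where the distinguishing feature of the two-element field really shows up—presumably because the polynomial condition carving out $\{\xi:\dim A_5^\xi=0\}$ in $\K^9$ is identically satisfied on $\Z_2^9$ (for instance involving a factor like $c(c-1)$ or $c+c'$ that vanishes throughout $\Z_2$).
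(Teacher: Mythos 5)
Your lower-bound half is correct and in fact takes a different, more self-contained route than the paper: the paper simply cites the Golod--Shafarevich theorem to get $\dim B_2\geq 6$, $\dim B_3\geq 9$, $\dim B_4\geq 9$, whereas you rederive these bounds by hand. Your degree-$4$ computation is valid: the concatenation map $V^{\otimes 2}\otimes V^{\otimes 2}\to V^{\otimes 4}$ is an isomorphism, so $RR$ has dimension $9$ and sits inside $V^2R\cap RV^2$, and the inequality $\dim(X+Y+Z)\leq\dim X+\dim Y+\dim Z-\dim(X\cap Z)$ gives $\dim I_4\leq 72$, hence $\dim A_4\geq 9$. This is exactly the content of the Golod--Shafarevich estimate in this range, made explicit.

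The realization half, however, has a genuine gap, and it is precisely where the real work of the theorem lies. You need one explicit generalized Sklyanin algebra with Hilbert series $1+3t+6t^2+9t^3+9t^4$ over \emph{every} field $\K\neq\Z_2$, including $\F_3,\F_4,\F_5,\F_7,\F_8,\dots$. Your Zariski-intersection argument only produces a point when $\K$ is infinite (over a finite field two non-empty Zariski open subsets of $\K^9$ can be disjoint), and it moreover leans on Theorem~\ref{copo02}, whose proof in the paper is itself derived from the very realization lemmas you are trying to avoid, so you risk circularity. Your fallback, ``a characteristic-independent rank computation in $V^{\otimes 5}$ via Remark~\ref{rem1},'' cannot exist: Remark~\ref{rem1} says the Hilbert series is determined by the subfield generated by the coefficients, and the answer genuinely depends on the characteristic --- the theorem's own statement for $\Z_2$ shows this, and the paper notes that its characteristic-$0$ example (Lemma~\ref{PHEW0}) fails in characteristics $2$, $7$, $19$, $23$. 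The paper's proof has to work for this: it uses one example for ${\rm char}\,\K\in\{3,5\}$, a one-parameter family $xx=zy$, $xy=zz$, $xz=yy+azx$ whose collapse at degree $5$ requires $a$ to avoid the roots of an explicit degree-$18$ polynomial, and then a careful verification (Lemma~\ref{gsk1}) that such an $a$ exists in every remaining field, with separate checks for $\F_4$ and $\F_8$. None of this is replaceable by genericity. Your plan for the $\Z_2$ case (finite enumeration up to the $S_3$-symmetry, verifying the termwise minima $d_5=5$, $d_6=1$ and exhibiting one tuple realizing the whole polynomial) is the right strategy and matches the paper's Lemma~\ref{kz2}, but it too is only a plan: the enumeration and the Gr\"obner basis computations that constitute the proof are not carried out.
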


\begin{remark}\label{rem2} By Theorem~\ref{copo22} and Lemma~\ref{minhs}, if $\K$ is an infinite field, a Zarissky-generic $\widehat{Q}^\xi$ has the Hilbert series $1+3t+6t^2+9t^3+9t^4$ and therefore has dimension $28$. If $\K\neq\Z_2$, the minimal dimension of $\widehat{Q}^\xi$ is again $28$, while for $\K=\Z_2$, the minimal dimension of $\widehat{Q}^\xi$ is $34$.
\end{remark}

It is possible to characterize PHS among the generalized Sklyanin algebras. The subtle bit is that the set of leading monomials of the relations depends on the distribution of zeros among the coefficients. Fortunately many cases are equivalent to each other by means of applying a permutation of variables (any permutation of variables keeps the shape of the relations and shuffles the coefficients) and scaling the variables (a substitution which multiplies each variable by a non-zero constant).

First, we describe the following 4 classes of generalized Sklyanin algebras. Namely, we say that for a generalized Sklyanin algebra $A$,
\begin{align}\label{PP1}
&\text{$A\in {\cal P}_1$ if}\qquad \left\{\begin{array}{l}\text{$A$ is a Sklyanin algebra $A=Q^{p,q,r}$ with}\\
\text{$(pq,pr,qr)\neq(0,0,0)$ and $(p^3-q^3,p^3-r^3)\neq (0,0)$;}\end{array}\right.
\\
&\text{$A\in {\cal P}_2$ if}\qquad \left\{\begin{array}{l}\text{$A$ is a generalized Sklyanin algebra, whose relations have the shape}\\ \text{$yz-azy=0$, $bzx-xz=0$, $xy-cyx=0$, where $a,b,c\in\K$;}\end{array}\right.\label{PP2}
\\
&\text{$A\in {\cal P}_3$ if}\qquad \left\{\begin{array}{l}\text{$A$ is a generalized Sklyanin algebra, whose relations have the shape}\\ \text{$yz-azy=0$, $bzx-xz+yy=0$, $xy-ayx=0$, where $a,b\in\K$;}
\end{array}\right.\label{PP3}
\\
&\text{$A\in {\cal P}_4$ if}\qquad \left\{\begin{array}{l}\text{$A$ is a generalized Sklyanin algebra, whose relations have the shape}\\ \text{$yz-azy=0$, $azx-xz+yy=0$, $xy-ayx-zz=0$, where $a\in\K$;}
\end{array}\right.\label{PP4}
\\
&\text{$A\in {\cal P}_5$ if}\qquad \left\{\begin{array}{l}\text{$A$ is a generalized Sklyanin algebra, whose relations have the shape}
\\ \text{$yz+\theta zy+\theta^2xx=0$, $zx+\theta^4xz+\theta^2 yy=0$, $xy+\theta^7yx+\theta^2 zz=0$,}\\
\text{where $\theta\in\K$ satisfies $\theta^9=1$ and $\theta^3\neq 1$;}\end{array}\right.\label{PP5}
\\
&\text{$A\in {\cal P}_6$ if}\qquad \left\{\begin{array}{l}\text{$A$ is a generalized Sklyanin algebra, whose relations have the shape}
\\ \text{$xx=0$, $\alpha zx+xz+yy=0$, $xy+\frac{1}{\alpha} yx+zz=0$, where $\alpha\in\K^*$.}\end{array}\right.\label{PP6}
\end{align}

Note that while ${\cal P}_j$ for $1\leq j\leq 4$ and ${\cal P}_6$ are infinite if $\K$ is infinite, while ${\cal P}_5$ is finite. More specifically, ${\cal P}_5$ is empty if $\K^*$ has no elements of order $9$ and contains $6$ sets of relations otherwise. Furthermore these $6$ algebras are one and the same since the permutations of the variables act transitively on the $6$-element set of algebras defined in ${\cal P}_5$.

\begin{theorem}\label{COCOCO} Assume that $\K$ is algebraically closed and let $A$ be a generalized Sklyanin algebra. Then $A$ satisfies $H_A(t)=(1-t)^{-3}$ if and only if the defining relations of $A$ can be turned into that of an algebra from ${\cal P}_j$ for some $1\leq j\leq 6$ by means of a permutation of the variables, a scaling of the variables and a normalization of the relations $($multiplying each relation by a non-zero constant$)$. Furthermore, $A$ is Koszul if $j\leq 5$ and $A$ is non-Koszul if $j=6$.
\end{theorem}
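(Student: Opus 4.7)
The plan has three components: (a) verify each family ${\cal P}_j$ yields a PHS; (b) check Koszulity for $j\leq 5$ and non-Koszulity for $j=6$; (c) show any PHS generalized Sklyanin algebra reduces, under permutations, scalings, and relation normalizations, to some ${\cal P}_j$. For (a): ${\cal P}_1$ is exactly Theorem~\ref{copo0}; the families ${\cal P}_2$, ${\cal P}_3$, ${\cal P}_4$, ${\cal P}_5$ all fit into the $\widehat{S}^q$ framework after dividing each relation by its leading coefficient, and a direct parameter match places them into cases (\ref{copo1}.4), (\ref{copo1}.2), (\ref{copo1}.2) (with a cyclic permutation of variables to align the vanishing pair among $\alpha,\beta,\gamma$), and (\ref{copo1}.5), respectively. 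Hence Theorem~\ref{copo1} delivers both PHS and Koszulity for all of these. The family ${\cal P}_6$ is genuinely outside $\widehat{S}^q$ since the relation $xx=0$ forces $p_1=q_1=0$; I would prove PHS directly by computing a noncommutative Gr\"obner basis of the defining ideal (e.g.\ with a degree-lexicographic order such as $y>x>z$) and verifying that the number of normal words in each degree $n$ is $\binom{n+2}{2}$.

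Koszulity of ${\cal P}_1$ is already in Theorem~\ref{copo0}, and for ${\cal P}_2$--${\cal P}_5$ it is the final clause of Theorem~\ref{copo1}. For non-Koszulity of ${\cal P}_6$ I compute the Hilbert series of the dual algebra $A^!$ via a Gr\"obner basis on the orthogonal relations and check that the product $H_A(-t)H_{A^!}(t)$ is not equal to $1$; by~(\ref{stm2}) this precludes Koszulity.

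For (c), let $A=\widehat{Q}^\xi$ be PHS and distinguish cases according to how many of $p_1,p_2,p_3$ vanish (after a cyclic permutation of the variables, gather the zeros first). If all $p_i$ are non-zero, divide each relation by its $p_i$ to place $A$ into $\widehat{S}^q$ form and invoke Theorem~\ref{copo1}; each of (\ref{copo1}.1)--(\ref{copo1}.5) is then brought, by further scaling of the variables---using algebraic closure to take cube roots, for instance to enforce $\alpha=\beta=\gamma$ in (\ref{copo1}.1)---into the corresponding ${\cal P}_j$ with $j\leq 5$. If exactly one $p_i$ vanishes but the paired $q_i$ does not, an analogous normalization combined with the PHS restrictions on the remaining parameters lands the algebra in ${\cal P}_2$, ${\cal P}_3$, or ${\cal P}_4$. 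Finally, if $p_i=q_i=0$ for some $i$, one relation reads $xx=0$ after a permutation, and the PHS constraint (checked by the dimension count of $A_2$ and $A_3$) pins the other two relations down to scalar multiples of the defining relations of ${\cal P}_6$.

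The main obstacle is the last sub-case of (c), coupled with the two explicit Gr\"obner basis computations for ${\cal P}_6$: one for the algebra itself, to establish PHS, and one for its Koszul dual, to certify non-Koszulity at a specific degree. The remaining steps amount to a systematic unpacking of Theorem~\ref{copo1} and a careful accounting of how the three allowed operations act on the nine-dimensional parameter space $\K^9$.
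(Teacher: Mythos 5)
Your overall architecture (verify each family ${\cal P}_j$ is PHS, settle Koszulity family by family, then classify) matches the paper's, and your treatment of ${\cal P}_1$ and of the non-Koszulity of ${\cal P}_6$ (infinite-dimensional dual, so $H_A(-t)H_{A^!}(t)=1$ fails) is essentially what the paper does. But there is a genuine logical gap: you invoke Theorem~\ref{copo1} both to certify that ${\cal P}_2$--${\cal P}_5$ are Koszul PHSs and, in part (c), to classify all algebras with $p_1p_2p_3\neq 0$. In this paper Theorem~\ref{copo1} has no independent proof; it is the specialization of Theorem~\ref{COCOCO} to the normalized form $\widehat{S}^q$, so deriving Theorem~\ref{COCOCO} from it is circular. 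The paper instead proves everything from scratch: for ${\cal P}_2$--${\cal P}_4$ it checks that the defining relations are already a Gr\"obner basis (so the algebras are PBW, hence Koszul, and PHS by Lemma~\ref{pbbw}); for ${\cal P}_5$ it performs an explicit linear change of variables, computes a finite Gr\"obner basis, counts normal words, and applies Corollary~\ref{deg3a}. If you want to keep your route you must first supply an independent proof of Theorem~\ref{copo1}, which is essentially the same amount of work.

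Two further points. First, the converse direction is where nearly all the content lies, and your case split by the number of vanishing $p_i$ does not organize it correctly: the paper's decisive dichotomy is whether some product $p_jq_kr_l$ with $\{j,k,l\}=\{1,2,3\}$ is non-zero; in that main case the relations are solved for $xx$, $xy$, $xz$, the three degree-3 overlaps produce explicit elements $\xi_1,\xi_2,\xi_3$, and the PHS condition forces exactly one of them to vanish, which is what generates the families ${\cal P}_1$--${\cal P}_6$ (including the appearance of the ninth roots of unity in ${\cal P}_5$). Your sub-case ``$p_i=q_i=0$ implies ${\cal P}_6$'' also overlooks degenerate patterns such as two square relations, which must be excluded by dimension counts. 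Second, for the PHS claim for ${\cal P}_6$ you propose a degree-lexicographic order; the paper explicitly notes that for generic $\alpha$ the deg-lex Gr\"obner basis of this ideal is infinite and has to construct a bespoke multiplication-compatible order (sorting first by $z$-degree) to obtain a finite basis. Without such an order your normal-word count cannot be completed by a finite computation.
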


In other words, Theorem~\ref{COCOCO} says algebras in ${\cal P}_j$ with $j\leq 5$ are Koszul PHSs, algebras in ${\cal P}_6$ are PHS but non-Koszul, while the classes ${\cal P}_j$ for $1\leq j\leq 6$ cover all generalized Sklyanin PHSs up to a permutation and scaling of the variables.

\subsection{Proof of Theorem~\ref{COCOCO}}

\begin{lemma}\label{scale1} Let $\K$ be algebraically closed,
$\xi=(p_1,p_2,p_3,q_1,q_2,q_3,r_1,r_2,r_3)\in\K^9$ and
$\xi'=(p_1,p_2,p_3,q_1,q_2,q_3,r'_1,r'_2,r'_3)\in\K^9$ be such that
$r_1r_2r_3=r'_1r'_2r'_3$ and for each $j\in\{1,2,3\}$, either $r_j=r'_j=0$ or $r_jr'_j\neq0$.
Then there is a scaling of the variables providing an isomorphism between
$\widehat{Q}^\xi$ and $\widehat{Q}^{\xi'}$.
\end{lemma}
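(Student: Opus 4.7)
The plan is to analyze how a diagonal scaling of the generators affects the coefficient tuple $\xi$ and then to solve the resulting system of equations. Consider the substitution $x\mapsto\alpha x$, $y\mapsto\beta y$, $z\mapsto\gamma z$ with $\alpha,\beta,\gamma\in\K^*$. Plugging into the relations \eqref{skl-gen} and dividing each of the three relations by the common non-zero factor appearing in front of its $p_j$- and $q_j$-terms (which does not change the ideal generated by them), one sees that $p_j$ and $q_j$ are preserved, while
$$
(r_1,r_2,r_3)\longmapsto\Bigl(\tfrac{\alpha^2}{\beta\gamma}\,r_1,\ \tfrac{\beta^2}{\alpha\gamma}\,r_2,\ \tfrac{\gamma^2}{\alpha\beta}\,r_3\Bigr).
$$
In particular, both the product $r_1r_2r_3$ and the zero pattern of the triple $(r_1,r_2,r_3)$ are invariants of the scaling action, which is exactly why the two hypotheses of the lemma are imposed.

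For the existence of a scaling realizing the given transition, I would split into cases according to how many of the $r_j$ vanish. Assume first $r_1r_2r_3\neq0$, so also $r'_1r'_2r'_3\neq0$. Setting $s_j=r'_j/r_j\in\K^*$, the hypothesis $r_1r_2r_3=r'_1r'_2r'_3$ becomes $s_1s_2s_3=1$, and one must solve
$$
\tfrac{\alpha^2}{\beta\gamma}=s_1,\qquad \tfrac{\beta^2}{\alpha\gamma}=s_2,\qquad \tfrac{\gamma^2}{\alpha\beta}=s_3.
$$
Fixing $\gamma=1$ and eliminating $\beta=\alpha^2/s_1$ via the first equation reduces the second to the single condition $\alpha^3=s_1^2 s_2$; the third then holds automatically thanks to $s_1s_2s_3=1$. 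Algebraic closure of $\K$ provides the required cube root $\alpha\in\K^*$, and then $\beta=\alpha^2/s_1\in\K^*$.

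In the remaining cases the argument is only easier. If exactly one $r_j$ vanishes, say $r_1=r'_1=0$, the first equation becomes vacuous; setting $\alpha=1$ reduces the remaining two equations to the single cubic $\beta^3=s_2^2s_3$ (with $s_j=r'_j/r_j$), again solvable in $\K^*$, and $\gamma=\beta^2/s_2$ finishes the job. If two of the $r_j$ vanish only one square root is required, and if all three vanish the identity scaling suffices. The sole non-trivial ingredient at every step is the existence of the relevant cube or square root in $\K$, which is exactly what algebraic closure guarantees; this is the only potential obstacle, and it is handled by the hypothesis on $\K$.
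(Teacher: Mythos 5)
Your proposal is correct and follows essentially the same route as the paper's proof: both derive the identical transformation law $(r_1,r_2,r_3)\mapsto(\tfrac{\alpha^2}{\beta\gamma}r_1,\tfrac{\beta^2}{\alpha\gamma}r_2,\tfrac{\gamma^2}{\alpha\beta}r_3)$, normalize one variable, and extract a cube root guaranteed by algebraic closure, with the remaining equation following from the product hypothesis. The only difference is a cosmetic reorganization of the case split (the paper treats ``at least two $r_j$ non-zero'' in one stroke, you separate the all-non-zero and exactly-one-zero cases), which changes nothing of substance.
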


\begin{proof} For $\alpha,\beta,\gamma\in\K^*$, under the scaling substitution $x=\alpha u$, $y=\beta v$,
$z=\gamma w$, the defining relations of $\widehat{Q}^\xi$ (in terms of $u$, $v$ and $w$ after a suitable
normalization) take form
$$
\textstyle p_1vw+q_1wv+\frac{r_1\alpha^2}{\beta\gamma}uu=0,\ \
\textstyle p_2wu+q_2uw+\frac{r_2\beta^2}{\alpha\gamma}vv=0,\ \
\textstyle p_3uv+q_3vu+\frac{r_3\gamma^2}{\alpha\beta}ww=0.
$$
Thus in order to prove that a scaling providing an isomorphism between
$\widehat{Q}^\xi$ and $\widehat{Q}^{\xi'}$, it suffices to show that
\begin{equation}\label{albega}
\frac{r_1\alpha^2}{\beta\gamma}=r_1',\ \ \frac{r_2\beta^2}{\alpha\gamma}=r_2'
\ \ \text{and}\ \ \frac{r_3\gamma^2}{\alpha\beta}=r_3'\ \ \text{for some }\alpha,\beta,\gamma\in\K^*.
\end{equation}
First, assume that at least two of $r_j$ are non-zero. Without loss of generality, $r_2r_3\neq 0$.
Then $r'_2r'_3\neq 0$. Since $\K$ is algebraically closed, there is $\beta\in\K^*$ such that
$\beta^3=\frac{r'_3{r'_2}^2}{r_3r_2^2}$. Now we choose $\alpha=1$ and $\gamma=\frac{r_2\beta^2}{r'_2}$.
It is routine to verify that (\ref{albega}) is satisfied.

The case $r_1=r_2=r_3=0$ is trivial. It remains to consider the case when exactly one of $r_j$ is zero.
Without loss of generality, $r_1=r_2=0$ and $r_3\neq 0$. Then $r'_1=r'_2=0$ and $r'_3\neq 0$. Now choosing
$\beta=\gamma=1$ and $\alpha=\frac{r_3}{r'_3}$, we see that (\ref{albega}) is satisfied.
\end{proof}

\begin{lemma}\label{kopsa} Let $1\leq j\leq 5$ and $A\in {\cal P}_j$. Then $A$ is a Koszul PHS.
\end{lemma}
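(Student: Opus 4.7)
The plan is to treat the five classes separately, invoking Theorem~\ref{copo0} for ${\cal P}_1$, applying Lemma~\ref{pbbw}.1 to establish the PBW (hence Koszul and PHS) property for ${\cal P}_2$--${\cal P}_4$, and combining an explicit Gr\"obner-basis computation with Corollary~\ref{deg3a} for ${\cal P}_5$. For ${\cal P}_1$, membership is by definition the non-degeneracy hypothesis of Theorem~\ref{copo0} (at least two of the Sklyanin parameters non-zero and $p^3=q^3=r^3$ failing), so that theorem immediately gives both Koszulity and the PHS property.

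For ${\cal P}_j$ with $j\in\{2,3,4\}$, I would choose a degree-lexicographic ordering on monomials in $x,y,z$ so that the leading monomials of the three defining relations form one of the six admissible triples in (\ref{lemo}). Concretely, the ordering $z>y>x$ produces the leading triple $\{zy,zx,yx\}$ for ${\cal P}_2$ and ${\cal P}_3$, while an ordering such as $x>z>y$ places the leading monomials of ${\cal P}_4$ into the pattern $\{xy,xz,zy\}$. By Lemma~\ref{pbbw}.1, it then suffices to verify $\dim A_3=10$, equivalently that the single critical overlap of the three leading monomials resolves to zero after applying the three reduction rules. A short direct computation carries this out in each class, upon which Lemma~\ref{pbbw}.1 delivers the PBW property, and with it both Koszulity and PHS.

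For ${\cal P}_5$, Lemma~\ref{pbbw}.1 is unavailable: each defining relation contains a square ($xx$, $yy$, or $zz$), so under every monomial ordering in $x,y,z$ the leading triple is forced to contain a square and cannot match any pattern in (\ref{lemo}); by Lemma~\ref{pbbw}.2, $A$ is not PBW in the basis $\{x,y,z\}$. The key observation is that ${\cal P}_5$ is a finite family indexed by primitive ninth roots of unity $\theta\in\K$, and the six listed sets of relations are identified under permutations of the variables, so up to isomorphism there is essentially one algebra to analyse. For it I would run the non-commutative Buchberger procedure; the identities $\theta^9=1$ and $\theta^3\neq 1$ are exactly what force every critical overlap to resolve and produce a finite reduced Gr\"obner basis, from which $H_A(t)=(1-t)^{-3}$ can be read off. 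An analogous Buchberger computation for the dual algebra gives $H_{A^!}(t)=(1+t)^3$, so that $H_A(-t)H_{A^!}(t)=1$. A further inspection of the Gr\"obner basis will show that no leading monomial starts with a chosen generator, say $z$, so left multiplication by $z$ is injective on $A$ and hence $A$ has no non-trivial right annihilators; Corollary~\ref{deg3a} then yields Koszulity.

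The main obstacle is ${\cal P}_5$: no PBW basis is available in the original generators, and each of the three inputs to Corollary~\ref{deg3a} (the Hilbert series of $A$, the Hilbert series of $A^!$, and the absence of non-trivial right annihilators) must be extracted from the same explicit, parameter-dependent Buchberger calculation. The delicate technical point is to verify that $\theta^9=1$, together with the three distinct exponents $\theta,\theta^4,\theta^7$ in the relations, is exactly what makes the procedure close without forcing an additional degree-three relation that would destroy the PHS conclusion.
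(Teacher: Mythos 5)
Your treatment of ${\cal P}_1$ (via Theorem~\ref{copo0}) and of ${\cal P}_2$--${\cal P}_4$ (exhibiting an ordering for which the leading monomials of the defining relations fall into the list (\ref{lemo}) and checking that the single overlap resolves, so that Lemma~\ref{pbbw} gives PBW, hence Koszul and PHS) matches the paper's argument, which simply records that in these classes the defining relations already form a Gr\"obner basis. One small bookkeeping caveat: for ${\cal P}_3$ with $b=0$ the ordering $z>y>x$ puts the square $yy$ into the leading triple, so you must switch to a different order such as $x>z>y$; similar care is needed whenever one of the parameters $a,b,c$ vanishes.

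The genuine gap is in ${\cal P}_5$. You propose to run the Buchberger procedure on the original relations in $x,y,z$ and assert that $\theta^9=1$, $\theta^3\neq1$ forces it to terminate in a finite reduced Gr\"obner basis. Nothing in your argument supports termination, and the paper strongly suggests it fails: for the closely related main case of Theorem~\ref{copo0} the authors note explicitly that the Gr\"obner basis in the natural generators "does not appear to be finite", and for ${\cal P}_5$ they do not attempt the computation in $x,y,z$ at all. The key idea your proposal is missing is a specific non-scaling, non-permutation linear substitution: after first using $(1+\theta)(1+\theta^4)(1+\theta^7)=-\theta^6$ and Lemma~\ref{scale1} to normalize the relations to $yz+\theta^7zy-(1+\theta^7)xx$, $zx+\theta^4xz-(1+\theta^4)yy$, $xy+\theta yx-(1+\theta)zz$, one sets $x=(1-\theta^7)u+\theta^2v+w$, $y=(\theta^3-\theta^7)u+\theta^5v+\theta^6w$, $z=(\theta^6-\theta^7)u+\theta^8v+\theta^3w$. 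Only in the coordinates $u,v,w$ does the reduced Gr\"obner basis become finite, with leading monomials $uu$, $uv$, $vw$, $uwu$, $uwv$, $uwwv$, $uwww$; the normal words $w^jv^k(uww)^m$, $w^jv^k(uww)^mu$, $w^jv^k(uww)^muw$ then give $H_A(t)=(1-t)^{-3}$, and since no leading monomial begins with $w$, left multiplication by $w$ is injective, so $A$ has no non-trivial right annihilators and Corollary~\ref{deg3a} applies. Without this change of variables (or a substitute for it) your plan for ${\cal P}_5$ does not go through; and even granting a finite basis in $x,y,z$, you would still need to exhibit a generator by which left multiplication is injective, which is not visible from the leading monomials in those coordinates.
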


\begin{proof} By Remark~\ref{rem1}, we can without loss of generality assume that $\K$ is algebraically closed.
The case $A\in{\cal P}_1$ follows from Theorem~\ref{copo0}. In the case $A\in{\cal P}_j$ with $2\leq j\leq 4$,
it is routine to verify that the defining relations of $A$ form a Gr\"obner basis of the
ideal they generate. Thus $A$ is PBW and therefore Koszul. By Proposition~\ref{pbbw}, $A$ is a PHS.
It remains to consider the case $A\in {\cal P}_5$. Let $\theta\in\K$ be such that $\theta^9=1\neq\theta^3$.
Then $\theta^6+\theta^3+1=0$. This equality yields
$(1+\theta)(1+\theta^4)(1+\theta^7)=-\theta^6$. By Lemma~\ref{scale1}, $A$ is isomorphic
to the algebra given by the generators $x$, $y$ and $z$ and the relations $g_1=g_2=g_3=0$, where
$$
g_1=yz+\theta^7 zy-(1+\theta^7)xx, \ \ g_2=zx+\theta^4 xz-(1+\theta^4)yy, \ \
g_3=xy+\theta yx-(1+\theta)zz.
$$
Now we perform the linear substitution $x=(1-\theta^7)u+\theta^2v+w$, $y=(\theta^3-\theta^7)u+\theta^5v+\theta^6w$ and $z=(\theta^6-\theta^7)u+\theta^8v+\theta^3w$. With respect to the new basis $u,v,w$ in $V$, the space $R$ of quadratic relations of $A$ is spanned by $uu-\theta^4uw+\theta^2vu+\theta^7wu-\theta^3wv$, $uv+\theta^2uw-vu-\theta^5wu+(\theta-\theta^4)wv$ and $vw-\theta^6wv$. We proceed to compute the reduced Gr\"obner basis of the ideal of relations with respect to the degree-lexicographical ordering assuming $u>v>w$. The basis happens to be finite and comprises the defining relations together with four more elements with the leading monomials $uwu$, $uwv$, $uwwv$ and $uwww$. The full list of the leading monomials is $uu$, $uv$, $vw$, $uwu$, $uwv$, $uwwv$ and $uwww$. Now one can see that the normal words are $w^jv^k(uww)^m$, $w^jv^k(uww)^mu$ and $w^jv^k(uww)^muw$ with $j,k,m\in\Z_+$. The number of such words of length $n$ is exactly $\frac{(n+1)(n+2)}2$ and therefore $H_A(t)=(1-t)^{-3}$ and $A$ is a PHS. Since the set of normal words is closed under multiplication by $w$ on the left, the said multiplication is an injective linear map on $A$ and $A$ has no non-trivial right annihilators. Now an application of Corollary~\ref{deg3a} yields Koszulity of $A$.
\end{proof}

\begin{lemma} \label{pp6} Let $A\in{\cal P}_6$. Then $A$ is a non-Koszul PHS.
\end{lemma}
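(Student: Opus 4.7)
The plan is to deduce non-Koszulity from the Koszul-duality formula~(\ref{stm2}) by showing that $H_{A^!}$ does not match the polynomial $(1+t)^3$ required for Koszulity of a PHS. Together with verifying separately that $A$ is a PHS, this forces $H_A(-t)H_{A^!}(t)\neq 1$.

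First I would compute $A^!$. A direct linear calculation of $R^\perp$ yields the six defining relations
\[
yz,\quad zy,\quad xz-\alpha^{-1}zx,\quad yy-\alpha^{-1}zx,\quad xy-zz,\quad yx-\alpha^{-1}zz,
\]
with leading monomials $xy,xz,yx,yy,yz,zy$ under the degree-lexicographic order $x>y>z$, so $\dim A^!_2=3$ with normal words $xx,zx,zz$. The S-polynomials of the overlaps $(xy)z$, $(yy)x$, $(yy)z$ force the extra relations $zzz=0$, $zxx=0$, $zzx=0$, independently of $\alpha$; every remaining overlap (including those between the new degree-$3$ leading terms and the existing degree-$2$ ones) reduces to zero modulo this finite Gr\"obner basis. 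Consequently the only normal word of degree $n\geq 3$ is $x^n$, and
\[
H_{A^!}(t)=1+3t+3t^2+\frac{t^3}{1-t}=(1+t)^3+\frac{t^4}{1-t}.
\]

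Next I would show $H_A(t)=(1-t)^{-3}$. With the same order the leading monomials of the defining relations of $A$ are $xx,xy,xz$, giving $\dim A_2=6$; resolving the overlaps $(xx)y$ and $(xx)z$ produces two degree-$3$ Gr\"obner basis elements with leading monomials $yyz,yzz$ when $\alpha^3=1$ and $yyx,yyz$ otherwise, and $\dim A_3=10$. For $\alpha^3=1$ the basis closes at degree $3$ (the only further overlap $yyzz$ reduces consistently from both sides to $zzyy$), and the normal-word enumeration via the forbidden-subword automaton yields exactly $\binom{n+2}{2}$ in every degree. For $\alpha^3\neq 1$ the Gr\"obner basis is infinite, but the degree-$4$ overlaps $(yyx)x$ and $(yyx)z$ contribute the new leading monomials $yzzx$ and $yyyy$, restoring $\dim A_4$ to $15$; continuing inductively, or alternatively applying Lemma~\ref{minhs} and Corollary~\ref{hsdri} to the one-parameter family $\mathcal{P}_6$ to bootstrap the PHS property from the finite $\alpha^3=1$ case, yields $H_A(t)=(1-t)^{-3}$ for every $\alpha\in\K^*$.

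Combining both computations gives
\[
H_A(-t)H_{A^!}(t)=\frac{1}{(1+t)^3}\Bigl((1+t)^3+\frac{t^4}{1-t}\Bigr)=1+\frac{t^4}{(1-t)(1+t)^3}\neq 1,
\]
so by~(\ref{stm2}) the algebra $A$ is not Koszul. The main obstacle is establishing the PHS property uniformly in $\alpha$: for $\alpha^3\neq 1$ the Gr\"obner basis of $I_A$ does not terminate in any finite degree, so one either needs to identify the recursive pattern of new leading monomials at each degree or invoke a careful semi-continuity argument within $\mathcal{P}_6$; fortunately, $H_{A^!}$ itself is computed by a finite Gr\"obner basis and the non-Koszul conclusion depends only on $H_{A^!}\neq(1+t)^3$ together with the value of $H_A$.
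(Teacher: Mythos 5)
Your treatment of the dual algebra and of the non-Koszulity step is essentially the paper's argument: one computes a finite Gr\"obner basis for $I_{A^!}$, finds that $A^!$ has one-dimensional components in every degree $\geq 3$ (so $H_{A^!}(t)\neq(1+t)^3$), and concludes from~(\ref{stm2}) together with $H_A(t)=(1-t)^{-3}$ that $A$ is not Koszul. That part is fine.

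The gap is in the PHS claim, which is half of the statement and must hold for \emph{every} $\alpha\in\K^*$. Your degree-lexicographic Gr\"obner basis does not terminate when $\alpha^3\neq 1$, and neither of your two proposed repairs closes the argument. ``Continuing inductively'' is not carried out: you verify only degree $4$ and never exhibit or prove the recursive pattern of leading monomials, so $\dim A_n=\binom{n+2}{2}$ for $n\geq 5$ is simply asserted. The semicontinuity route via Lemma~\ref{minhs} and Corollary~\ref{hsdri} cannot work even in principle here: $\dim A_k^\alpha$ is upper semicontinuous in $\alpha$, so the computation at the special points $\alpha^3=1$ only shows $h_k\leq\binom{k+2}{2}$ and that the \emph{generic} member of the family has $\dim A_k=h_k$; it gives no control over the remaining (non-generic) values of $\alpha$, does not rule out $h_k<\binom{k+2}{2}$, and in any case Corollary~\ref{hsdri} requires $\K$ uncountable. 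Since the lemma (and its use in Theorem~\ref{COCOCO}) needs the conclusion for all $\alpha\in\K^*$, a genuinely uniform argument is required. The paper's way around this is the idea you are missing: after swapping $x$ and $z$ it replaces deg-lex by a different multiplication-compatible order (degree first, then $z$-degree, with deg-lex tie-breaks), under which the Gr\"obner basis is \emph{finite} for every $\alpha$ --- the three defining relations plus two cubic and one quartic element, with leading monomials $xx$, $yy$, $zz$, $xyz$, $xzy$, $xzyz$ --- and the normal-word count then gives $H_A(t)=(1-t)^{-3}$ uniformly in $\alpha$.
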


\begin{proof} By swapping $x$ and $z$, we see that $A$ is isomorphic to the algebra $B$ given $zz=0$, $xx=-\alpha^{-1}yz-zy$ and $yy=-\alpha xz-zx$ with $\alpha\in\K^*$. One can show that for a generic $\alpha$, the Gr\"obner basis of the ideal of relations of this algebra is infinite with respect to the deg-lex ordering. However, it is finite with respect to another ordering compatible with multiplication. Namely, we use the following order on the monomials. A monomial of bigger degree is bigger. For two monomials of equal degree, the one with higher $z$-degree is smaller. For two monomials of equal degrees and $z$-degrees, we replace all $y$ by $x$ in both and compare the resulting $xz$ monomials using left-to-right deg-lex order assuming $x>z$. If this happens to be a tie, we break it by using left-to-right deg-lex order assuming $x>y>z$. In this order the Gr\"obner basis consists of the defining relations together with $xyz+\alpha xzy-yzx-\alpha zyx$, $\alpha yxz-\alpha xzy+yzx-zxy$ and $\alpha xzyz-yzxz-\alpha zxzy+zyzx$. The leading monomials of the basis are $xx$, $yy$, $zz$, $xyz$, $xzy$ and $xzyz$, which leads to $H_A(t)=(1-t)^{-3}$. That is, $A$ is a PHS.

The dual algebra $A^!$ is given by the relations $xy=yx=yy-zx=xx-zy=\alpha yy-xz=xx-\alpha yz=0$. Clearly, $A^!$ is infinite dimensional since $z^n$ are linearly independent in $A^!$ (in fact, a Gr\"obner basis computation  gives $H_{A^!}(t)=1+3t+3t^2+t^3+t^4+t^4+{\dots}$). Hence the equality $H_A(-t)H_{A^!}(t)=1$ fails and $A$ is non-Koszul.
\end{proof}

In order to complete the proof of Theorem~\ref{COCOCO} it remains to show that if $\K$ is algebraically closed and let $A$ generalized Sklyanin PHS, then $A$ falls into one of the families ${\cal P}_j$ for $1\leq j\leq 6$ after suitable permutation and scaling of variables (together with normalization of relations, of course). The consideration splits into cases according to how zeros are distributed among the coefficients. We can assume from the start that none of the defining relations of $A$ vanishes. Indeed, otherwise $\dim A_2>6$ and $A$ is not a PHS. The six cases $p_jq_kr_l\neq0$ for $\{j,k,l\}=\{1,2,3\}$ can be obtained from one another by suitable permutations of variables. If $p_jq_kr_l=0$ for every $j,k,l$ satisfying $\{j,k,l\}=\{1,2,3\}$, then the matrix
$$
\left(\begin{array}{ccc}p_1&q_1&r_1\\ p_2&q_2&r_2\\ p_3&q_3&r_3\end{array}\right)
$$
has either a zero column or a zero $2\times 2$ submatrix (the case of a zero row is excluded by the assumption that none of the relations is zero). Thus up to a permutation of the variables, we have only to deal with the cases
\begin{itemize}\itemsep=-2pt
\item$p_3q_2r_1\neq 0$;
\item$q_1=q_2=q_3=0$;
\item$r_1=r_2=r_3=0$;
\item$q_1=q_3=r_1=r_3=0$;
\item$p_1=p_2=q_1=q_2=0$.
\end{itemize}

First, we deal with easier cases. If $p_1=p_2=q_1=q_2=0$ is satisfied, the relations of $A$ (up to a normalization) take shape $xx=0$, $yy=0$ and $p_3xy+q_3yx+r_3zz=0$. Regardless which monomial is leading in the last relation, computing the degree $3$ elements of the Gr\"obner basis, we easily see that $\dim A_3\geq 11$ (it is actually either $11$ or $12$). Hence $\dim A_3\neq 10$ and $A$ is not a PHS.

If $q_1=q_3=r_1=r_3=0$ is satisfied, the relations of $A$ (up to a normalization) take shape $yz=0$, $xy=0$ and $p_2zx+q_2xz+r_2yy=0$. Regardless which monomial is leading in the last relation, computing the degree $3$ elements of the Gr\"obner basis, we again see that $\dim A_3\geq 11$ (it is $11$, $12$ or $13$). Hence $\dim A_3\neq 10$ and $A$ is not a PHS.

If $r_1=r_2=r_3=0$ is satisfied, then either $A$ belongs to ${\cal P}_2$ or $A$ is a monomial algebra satisfying $\dim A_3=12$. In the latter case $A$ is not a PHS.

The case $q_1=q_2=q_3=0$ is slightly more involved. If at least two of $r_j$ equal $0$, we can without loss of generality assume that $r_1=r_2=0$. The relations of $A$ take the shape $yz=0$, $zx=0$ and $p_3xy+r_3zz=0$. Again, it is easy to see that $\dim A_3\geq 11$ and therefore $A$ is not a PHS. It remains to consider the case when at least two of $r_j$ are non-zero. Without loss of generality $r_1r_2\neq 0$. First, consider the case $p_1=0$. Then the relations take shape $xx=\alpha yz$, $yy=\beta zx$ and $zz=0$ with $\alpha=-\frac{p_1}{r_1}$ and $\beta=-\frac{p_2}{r_2}$. If $\alpha\beta=0$, then we have $\dim A_3>10$ and $A$ is not a PHS. If $\alpha\beta\neq 0$, Lemma~\ref{scale1} allows us by means of a scaling of variables to bring the relations to $xx=yz$, $yy=zx$ and $zz=0$. It is a tedious enough but a doable exercise to check that $\dim A_6=31\neq 28$ and this implies that $A$ is not a PHS (actually $6$ is the first degree for which $\dim A_j$ deviates from $\frac{(j+1)(j+2)}{2}$). It remains to consider the case $r_1r_2p_3\neq 0$. Then the relations of $A$ take shape $xx=\alpha yz$, $yy=\beta zx$ and $xy=\gamma zz$ with $\alpha=-\frac{p_1}{r_1}$ $\beta=-\frac{p_2}{r_2}$ and $\gamma=-\frac{r_3}{p_3}$. If at leat two of the numbers $\alpha$, $\beta$ and $\gamma$ are $0$, $\dim A_3\geq 11$ and $A$ is not a PHS. If $\alpha=0$, $\beta\gamma\neq 0$ or $\beta=0$, $\alpha\gamma\neq 0$, then by a permutation and scaling of the variables (using Lemma~\ref{scale1}), we get the familiar relations $xx=yz$, $yy=zx$ and $zz=0$. We already know that then $\dim A_6=31$ and therefore $A$ is not a PHS. If $\gamma=0$, $\alpha\beta\neq 0$, by Lemma~\ref{scale1}, a scaling of the variables turns the relations into $xx=yz$, $xy=0$, $yy=zx$. In this case, using the Gr\"obner basis technique, one easily checks that $\dim A_4=17\neq 15$ and therefore $A$ is not a PHS. Finally, if $\alpha\beta\gamma\neq 0$, using the fact that $\K$ is algebraically closed, we can find $r\in\K^*$ such that $r^3=\frac{\gamma}{\alpha\beta}$. Now Lemma~\ref{scale1} provides a scaling of the variables, which turns the relations into $yz-rxx=0$, $zx-ryy=0$ and $xy-rzz=0$. These are the relations of $Q^{1,0,-r}\in{\cal P}_1$.

It remains to consider the main (and most involved) case $p_3q_2r_1\neq 0$. We treat it in more detail. In this case we can write the relations of $A$ as $xx=ayz+\alpha zy$, $xy=byx+\beta zz$ and $xz=cyy+\gamma zx$, where $a=-\frac{p_1}{r_1}$, $b=-\frac{q_3}{p_3}$, $c=-\frac{r_2}{q_2}$, $\alpha=-\frac{q_1}{r_1}$, $\beta=-\frac{r_3}{p_3}$, $\gamma=-\frac{p_2}{q_2}$. The leading monomials $xx$, $xy$ and $xz$ of the relations admit 3 overlaps $xxx$, $xxy$ and $xxz$. Resolving these, we find that the degree $3$ part of the Gr\"obner basis of the ideal of the relations comprises of
\begin{align*}
\xi_1&=c(ab+\alpha)yyy+a(b\gamma-1)yzx+\alpha(b\gamma-1)zyx+\beta(\alpha\gamma+a)zzz,
\\
\xi_2&=(ab^2+c\beta)yyz+(\alpha b^2-a)yzy+(c\beta\gamma-\alpha)zyy+\beta(b+\gamma^2)zzx,
\\
\xi_3&=c(b^2+\gamma)yyx+(bc\beta-a)yzz+(a\gamma^2-\alpha)zyz+(\alpha\gamma^2+c\beta)zzy.
\end{align*}
Since there are exactly 12 degree $3$ monomials, which do not contain any of $xx$, $xy$ and $xz$ as a submonomial, $\dim A_3=12-d$, where $d$ is the dimension of the space spanned by $\xi_1$, $\xi_2$ and $\xi_3$. Thus $A$ can not be a PHS unless $d=2$. Since no monomial features in more than one of $\xi_j$, $d$ equals $2$ precisely when exactly one of $\xi_j$ equals $0$. Now, solving the corresponding systems of algebraic equations, we see that
\begin{align*}
\xi_1&=0\iff \text{$b\gamma{-}1{=}ab{+}\alpha{=}0$ OR $b\gamma{-}1{=}\beta{=}c{=}0$ OR $a{=}\alpha{=}0$},
\\
\xi_2&=0\iff \text{$a{=}\alpha{=}\beta{=}0$ OR $b{+}\gamma^2{=}\alpha{=}a{=}c{=}0$ OR
$\gamma^9{+}1{=}b{+}\gamma^2{=}\alpha{-}c\beta\gamma{=}a{-}c\beta\gamma^5{=}0$},
\\
\xi_3&=0\iff \text{$a{=}\alpha{=}c{=}0$ OR $b^2{+}\gamma{=}\alpha{=}a{=}\beta{=}0$ OR
$b^9{+}1{=}b^2{+}\gamma{=}a{-}c\beta b{=}\alpha{-}c\beta b^5{=}0$}.
\end{align*}

Using the above display, it is easy to see that
\begin{align*}
&\xi_2=0,\ \xi_1\neq0,\ \xi_3\neq 0\iff \text{$\gamma^9{+}1{=}b{+}\gamma^2{=}\alpha{-}c\beta\gamma{=}a{-}c\beta\gamma^5{=}0$ and
$c\beta(\gamma^3{+}1)\neq 0$}.
\\
&\xi_3=0,\ \xi_1\neq0,\ \xi_2\neq 0\iff \text{$b^9{+}1{=}b^2{+}\gamma{=}a{-}c\beta b{=}\alpha{-}c\beta b^5{=}0$ and
$c\beta(b^3{+}1)\neq 0$}.
\\
&\xi_1=0,\ \xi_2\neq0,\ \xi_3\neq 0\iff \text{$a{=}\alpha{=}0{\neq}c\beta$ OR $b\gamma{-}1{=}\beta{=}c{=}0{\neq}a\alpha$}
\\
&\qquad\qquad\qquad\qquad\qquad\qquad\,\,\,\,\text{OR $b\gamma{-}1{=}ab{+}\alpha{=}0{\neq}a\alpha$ and $(b^3{+}1,c\beta{-}a\gamma){\neq}(0,0)$.}
\end{align*}
Since $\dim A_3=10$ precisely when exactly one of $\xi_j$ is $0$, we can restrict ourselves to this case. If only $\xi_2$ vanishes or only $\xi_3$ vanishes, the above display yields that there is $\theta\in\K$ such that $\theta^9=1\neq\theta^3$ and after a permutation of the variables, the relations of $A$ take shape $yz+\theta zy+s_1xx$, $zx+\theta^4 xz+s_2yy$ and $xy+\theta^7 yx+s_3zz$ with $s_1s_2s_3=\theta^6$. Now Lemma~\ref{scale1} implies that a scaling of the variables brings the relations to that of an algebra in ${\cal P}_6$. It remains to consider the case $\xi_1=0$, $\xi_2\neq0$ and $\xi_3\neq 0$. By the above display, $a=\alpha=0\neq c\beta$ OR $b\gamma-1=\beta=c=0\neq a\alpha$ OR $b\gamma-1=ab+\alpha=0\neq a\alpha$ and $(b^3+1,c\beta-a\gamma)\neq(0,0)$. In the case $b\gamma-1=\beta=c=0\neq a\alpha$, Lemma~\ref{scale1} provides a scaling of the variables bringing the relations to that of an algebra from ${\cal P}_3$. Now assume that $b\gamma-1=ab+\alpha=0\neq a\alpha$ and $(b^3+1,c\beta-a\gamma)\neq(0,0)$. If $\beta=c=0$, we fall into the previous case. Thus we can assume that $(\beta,c)\neq(0,0)$. The equality $b\gamma-1=ab+\alpha=0\neq a\alpha$ yields that after a normalization the relations take shape $yz-bzy+s_1xx$, $zx-bxz+s_2yy$ and $xy-byx+s_3zz$, where $s_1=-\frac1a$, $s_2=\frac{c}{\gamma}$, $s_3=-\beta$. Moreover, at least two of $s_j$ are non-zero. If there is $j$ with $s_j=0$, then after a permutation and a scaling of the variables (use Lemma~\ref{scale1}), we bring the relation to ${\cal P}_4$. If $s_1s_2s_3\neq 0$, we use algebraic closeness of $\K$ to find $t\in\K^*$ such that $t^3=s_1s_2s_3=\frac{c\beta}{a\gamma}$. By Lemma~\ref{scale1}, we can turn the relations into $yz-bzy+txx$, $zx-bxz+tyy$ and $xy-byx+tzz$, which are the
relations of $Q^{1,-b,t}$. Since $(b^3+1,c\beta-a\gamma)\neq(0,0)$, the equality $1=-b^3=t^3$ fails. Since $bt\neq0$, we have fallen into the class ${\cal P}_1$.

It remains to consider the case $a=\alpha=0\neq c\beta$. In this case after a scaling provided by Lemma~\ref{scale1}, the defining relations of $A$ take form $xx=0$, $xy-byx+zz=0$ and $xz+yy-\gamma zx=0$. Computing the Gr\"obner basis up to degree $4$, we get $\dim A_4=14\neq 15$ (and therefore $A$ is not a PHS) unless $\gamma b=1$. On the other hand, if $\gamma b=1$, these relations fall into ${\cal P}_6$. This concludes the proof of
Theorem~\ref{COCOCO}.

\subsection{Proof of Theorems~\ref{copo02} and~\ref{copo22}}

\begin{lemma}\label{PHEW0} Assume that ${\rm char}\,\K\in\{3,5\}$. Then the generalized Sklyanin algebra $A$ given by the relations $xx+zy=0$, $xy+2yx+zz=0$ and $xz+zx+yy=0$ satisfies $H_A(t)=1+3t+6t^2+9t^3+9t^4$ and $H_{A^!}(t)=1+3t+3t^2$.
\end{lemma}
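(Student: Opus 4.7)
The plan is a direct Gr\"obner basis computation carried out separately for $A$ and $A^!$, using the left-to-right degree-lexicographical order with $x>y>z$. The dual computation is the shorter of the two, so I would tackle $A^!$ first.

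The three orthogonality conditions on $R$ read $c_{xx}+c_{zy}=0$, $c_{xy}+2c_{yx}+c_{zz}=0$ and $c_{xz}+c_{zx}+c_{yy}=0$, so a convenient basis of $R^\perp$ is $\{yz,\ xx-zy,\ xy-zz,\ yx-2zz,\ xz-yy,\ zx-yy\}$. Their leading monomials exhaust all quadratic words outside $\{yy,zy,zz\}$, giving $\dim A^!_2=3$. The candidate normal cubics are $yyy,zyy,zzy,zzz$. Resolving the overlaps $(xx)\cdot x$, $y\cdot(zx)$, $(xx)\cdot y$ and $(xx)\cdot z$ yields the identities $yyy=2zzz$, $yyy=0$, $zyy=0$ and $zzy=0$ respectively; since $2\neq 0$ in $\K$ by the characteristic hypothesis, also $zzz=0$. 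Hence $A^!_3=\{0\}$, forcing $A^!_n=0$ for all $n\geq 3$ and $H_{A^!}(t)=1+3t+3t^2$.

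For $A$ itself, the three defining relations already have distinct leading monomials $xx$, $xy$, $xz$, so $\dim A_2=6$. I would then resolve the three degree-$3$ overlaps $(xx)\cdot x$, $(xx)\cdot y$ and $(xx)\cdot z$. After reduction these produce three new Gr\"obner basis elements whose leading monomials are $yyy$, $yyz$ and---depending on the characteristic---either $yzz$ (when ${\rm char}\,\K=3$, because a coefficient $-3$ then collapses) or $yyx$ (when ${\rm char}\,\K=5$). In either case this trims the $12$ degree-$3$ words with $x$ only at the last position down to $9$, so $\dim A_3=9$. The next step is to iterate the Buchberger procedure in degrees $4$ and $5$: the fresh overlaps between the six basis elements so far are finite in number and I would process them systematically, accumulating further relations that cut $\dim A_4$ to $9$ and, at degree $5$, force every remaining monomial to reduce to zero. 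Since $A$ is connected and generated in degree $1$, the vanishing of $A_5$ yields $A_n=0$ for every $n\geq 5$, and $H_A(t)=1+3t+6t^2+9t^3+9t^4$ follows by counting normal words.

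The main obstacle is the bookkeeping for $A$: the reduction cascade in degrees $4$ and $5$ branches through many paths, and one has to verify that the process closes with no new elements in degree $\geq 6$. The role of the hypothesis ${\rm char}\,\K\in\{3,5\}$ is to make certain small-integer coefficients vanish in these reductions and thereby trigger the extra collapses that bring $\dim A_5$ to $0$; identifying precisely the overlaps where $3$ or $5$ must vanish in $\K$ is where the real care is required. Once the complete Gr\"obner basis is written out, both Hilbert series are immediate by enumerating normal monomials of each degree.
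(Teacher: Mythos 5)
Your approach is the same as the paper's: a direct Gr\"obner basis computation for $A$ and $A^!$ in the deg-lex order with $x>y>z$, followed by a count of normal words. Everything you actually execute is correct: your basis of $R^\perp$, the four overlap resolutions giving $yyy=2zzz$, $yyy=zyy=zzy=0$ and hence $A^!_3=0$ (using $2\neq0$), and the degree-$3$ analysis of $A$, including the correct identification of the new leading monomials $yyy$, $yyz$ and then $yzz$ in characteristic $3$ (where the coefficient $-3$ vanishes) versus $yyx$ in characteristic $5$ --- this matches the paper's basis exactly. The one shortfall is that the degree-$4$ and degree-$5$ stages for $A$, which carry the actual content of the claims $\dim A_4=9$ and $A_5=0$, are announced rather than performed; since the whole lemma is a finite computation, the proof is not complete until those overlaps are resolved. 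The paper handles this by simply exhibiting the finished Gr\"obner bases (three more elements of degree $4$ and a batch of degree-$5$ monomial-killing elements in each characteristic) together with the full list of $28$ normal words; to finish, you would need to produce the analogous lists and confirm that no overlap of degree $\geq 6$ yields anything new.
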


\begin{proof} One easily sees that $A^!$ is given by the relations $yz=0$, $xx=zy$, $xy=zz$, $yx=2zz$, $xz=zx$ and $yy=zx$.
The ideals of relations of $A$ and of $A^!$ happen to have a finite Gr\"obner bases. In the case ${\rm char}\,\K=3$, the elements $xx+zy$, $xy+2yx+zz$, $xz+zx+yy$, $yz^2+zyz$, $y^2z-yzy+z^2x$, $y^3-zyx-z^3$, $yzyz-z^2y^2-z^3x$, $yzy^2-zyzx+z^4$, $yzyx-zy^2x-z^3y$, $z^5$, $z^4y$, $z^4x$, $z^3yz$, $z^3y^2$, $z^3yx$, $z^2yzy$, $z^2yzx$ and $z^2y^2x$ form a Gr\"obner basis of the ideal of relations of $A$. The complete list of corresponding normal words is $x$, $y$, $z$, $yx$, $yy$, $yz$, $zx$, $zy$, $zz$, $yyx$, $yzx$, $zyx$, $zzx$, $yzy$, $zyy$, $zzy$, $zyz$, $zzz$, $zyyx$, $zzyx$, $zyzx$, $zzzx$, $zzyy$, $zyzy$, $zzzy$, $zzyz$ and $zzzz$.

In the case ${\rm char}\,\K=5$, the elements $xx+zy$, $xy+2yx+zz$, $xz+zx+yy$, $y^2z+yzy+z^2x$, $y^3-zyx-z^3$, $y^2x-yz^2-2zyz$,  $zyzx+2z^2yx$, $yz^3+zyz^2-2z^3y$, $yz^2y+zyzy+2z^2y^2-2z^3x$, $yzyz+zyzy+z^2y^2$, $yzy^2-yz^2x-3z^2yx-z^4$, $yzyx-3zyz^2-z^2yz+z^3y$, $z^5$, $z^4y$, $z^4x$, $z^3yz$, $z^3y^2$, $z^3yx$, $z^2yz^2$, $z^2yzy$ and $zyz^2x$ form a Gr\"obner basis of the ideal of relations of $A$. The complete list of corresponding normal words is $x$, $y$, $z$, $yx$, $yy$, $yz$, $zx$, $zy$, $zz$, $yzx$, $zyx$, $zzx$, $yzy$, $yzz$, $zyy$, $zzy$, $zyz$, $zzz$, $yzzx$, $zzyx$,  $zzzx$,  $zzyy$, $zyzy$, $zzzy$, $zzyz$, $zyzz$ and $zzzz$.

In both cases the elements $yz$, $xx-zy$, $xy-zz$, $yx-2zz$, $xz-zx$, $yy-zx$, $zzx$, $zzy$ and $zzz$ form a Gr\"obner basis of the ideal of relations of $A^!$. The complete list of corresponding normal words is $x$, $y$, $z$, $zx$, $zy$ and $zz$. In any case, we have $H_A(t)=1+3t+6t^2+9t^3+9t^4$ and $H_{A^!}(t)=1+3t+3t^2$.
\end{proof}

It is worth mentioning that in the above lemma, the condition ${\rm char}\,\K\in\{3,5\}$ can be significantly relaxed. For instance, the same conclusion holds if ${\rm char}\,\K\in\{0,11,13,17\}$ as well as for any sufficiently large prime characteristic. On the other hand, the conclusion of Lemma~\ref{PHEW0} fails if ${\rm char}\,\K\in\{2,7,19,23\}$.

\begin{lemma}\label{PHEW} Let $a\in\K$ be such that
\begin{equation}\label{aneq}
\kern-15pt0{\neq}a(1{-}a)(3{+}a^2)(2{-}2a{-}a^3)(1{-}3a{-}a^3)(1{+}a{-}3a^2\!{-}a^3\!{-}a^4)(1{-}2a{+}3a^2\!{-}a^3\!{+}a^4)
(1{-}2a^2\!{+}3a^3\!{+}a^5).
\end{equation}
Then the generalized Sklyanin algebra $A$ given by the relations $xx=zy$, $xy=zz$ and $xz=yy+azx$ satisfies $H_A(t)=1+3t+6t^2+9t^3+9t^4$ and $H_{A^!}(t)=1+3t+3t^2$.
\end{lemma}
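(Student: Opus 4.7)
The plan is to mirror directly the computation of Lemma~\ref{PHEW0}: compute explicit (non-commutative) Gr\"obner bases for the ideals of relations of both $A$ and its dual $A^!$, and enumerate the resulting normal words. The factors appearing in (\ref{aneq}) will turn out to be exactly the leading coefficients encountered in successive S-polynomial reductions, so the hypothesis guarantees that the reduction proceeds without degeneracy.

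For $A$, I would equip the monomials in $x,y,z$ with the left-to-right degree-lexicographic order induced by $x>y>z$. The leading monomials of the three defining relations $xx-zy$, $xy-zz$, $xz-yy-azx$ are then $xx$, $xy$, $xz$, so the six monomials of degree $2$ not starting with $x$ are normal, confirming $\dim A_2=6$. At degree $3$ the three overlaps $xxx$, $xxy$, $xxz$ produce S-polynomials which, upon reduction with the three defining relations, yield three new Gr\"obner basis elements whose leading monomials (up to sign) are $yyy$, $yyz$ and $zyz$; this cuts the $12$ naive normal degree-$3$ monomials down to $9$, giving $\dim A_3=9$. Continuing Buchberger's procedure through degrees $4,5,6$ produces further elements, and at each step the scalar that must be inverted to extract a single leading monomial from a reduced S-polynomial is a specific polynomial in $a$. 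The factors listed in (\ref{aneq}) are precisely those polynomials; under the standing hypothesis the reduction therefore proceeds cleanly and terminates with a finite Gr\"obner basis whose set of normal words is distributed as $1,3,6,9,9,0,0,\dots$ across degrees $0,1,2,3,4,5,6,\dots$, yielding $H_A(t)=1+3t+6t^2+9t^3+9t^4$.

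For $A^!$ the computation is much shorter. From the orthogonality conditions $c_{xx}=c_{zy}$, $c_{xy}=c_{zz}$, $c_{xz}=c_{yy}+ac_{zx}$ against the three defining relations of $A$, a basis of $R^\perp$ consists of the six quadratic forms $xx+zy$, $xy+zz$, $xz+yy$, $yx$, $yz$, $zx+axz$. Choosing a compatible order under which these have distinct leading terms, a short Buchberger computation then kills every degree-$3$ monomial: for example, reducing the S-polynomial of the overlap $xxy$ one obtains $(1-a)xzz$, and $1-a\neq 0$ (the first factor in (\ref{aneq})) is what is needed to conclude $xzz=0$ in $A^!$; the remaining degree-$3$ relations forcing $xzy$, $zzy$ and $zzz$ to vanish arise analogously from the overlaps $xxx$, $xyy$ and $xyz$. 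This yields $\dim A^!_j=1,3,3$ for $j=0,1,2$ and $A^!_j=0$ for $j\geq 3$, hence $H_{A^!}(t)=1+3t+3t^2$.

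The main obstacle is purely the bookkeeping for $A$: tracking the Buchberger computation across four degrees and identifying each parenthesized factor of (\ref{aneq}) with the specific S-polynomial reduction in which it first appears as a critical leading coefficient. The computation is mechanical but not short, and is most safely carried out with a computer algebra system; as in Lemma~\ref{PHEW0}, once the Gr\"obner basis is produced, the normal-word enumeration and the Hilbert series readouts are immediate.
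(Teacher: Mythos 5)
Your overall strategy---explicit noncommutative Gr\"obner basis computations for $A$ and $A^!$ followed by normal-word counting---is exactly the strategy of the paper, and your treatment of $A^!$ is essentially right, except that the paper observes that only $a\neq 0$ is needed there: the three candidate normal words $zzx$, $zzy$, $zzz$ of degree $3$ are killed by the overlaps $xyx$, $xxz$, $xxx$ without ever invoking $1-a\neq 0$ (your route through $xxy$ is a detour). The real problem is on the $A$ side, where your proposal stops precisely where the lemma begins: everything past degree $2$ is deferred to ``a mechanical computation,'' and the few concrete assertions you do make are off. With the deg-lex order and $x>y>z$, the overlaps $xxx$, $xxy$, $xxz$ produce degree-$3$ elements with leading monomials $yyx$, $yyy$, $yyz$ (not $yyy$, $yyz$, $zyz$); since the whole subsequent overlap analysis is driven by these monomials, this is not a cosmetic slip.

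More importantly, your account of where the hypothesis $(\ref{aneq})$ enters does not match what actually happens, and following your plan literally (completing Buchberger through degree $6$) is not what the paper does. After the degree-$4$ step (nine overlaps, six new elements with leading monomials $yzyx$, $yzyy$, $yzyz$, $zyzx$, $zyzy$, $zyzz$, giving $\dim A_4=9$), the paper does \emph{not} finish the Gr\"obner basis. It lists the twelve monomials spanning $A_5$, resolves six overlaps beginning with $zyz$ and six beginning with $yzy$, and observes that the resulting relations pair off into $2\times 2$ linear systems in pairs of those spanning monomials; the determinants of these systems are, up to the factors $a$ and $1-a$ already used in degrees $3$ and $4$, exactly the remaining factors of $(\ref{aneq})$ (for instance $(1-a)(3+a^2)(1+a-3a^2-a^3-a^4)$ and $a(1-3a-a^3)$). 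Nonvanishing of these determinants forces all twelve spanning monomials to vanish, whence $A_5=\{0\}$ and the series terminates at degree $4$. So the hypothesis is used to invert specific $2\times 2$ coefficient matrices, not single ``leading coefficients of S-polynomial reductions.'' Your plan would very likely reach the same answer if executed, but as written it neither performs nor correctly predicts the computation that constitutes the proof, and the identification of the factors of $(\ref{aneq})$ with concrete determinants is the one piece of content that cannot be waved at.
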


\begin{proof} One easily sees that $A^!$ is given by the relations $xx=-zy$, $xy=-zz$, $xz=-\frac1a zx$, $yx=0$, $yy=\frac1a zx$, $yz=0$. A direct computation yields that these defining relations together with $zzx$, $zzy$ and $zzz$ provide a Gr\"obner basis in the ideal of relations of $A^!$. The only normal words are $x$, $y$, $z$, $zx$, $zy$ and $zz$, which gives $H_{A^!}(t)=1+3t+3t^2$. Note that for this to be satisfied we just need $a\neq 0$. The rest of the assumptions on $a$ are needed to deal with $H_A$, which we start right now.

Resolving all three overlaps $(xx)z=x(xz)$, $(xx)x=x(xx)$ and $(xx)y=x(xy)$ of the leading monomials $xx$, $xy$ and $xz$ of the defining relations of $A$, we see that (provided $a\neq 0$), the degree $3$ part of the Gr\"obner basis of the ideal of relations of $A$ consists of the following $3$ elements
$$
\textstyle yyx-\frac1a zyz+\frac{1+a^2}{a}zzy,\ \ yyy-zyx+azzz,\ \ yyz-(1-a)zyy+a^2zz.
$$
It follows that the complete list of degree $3$ normal words is $yzx$, $yzy$, $yzz$, $zyx$, $zyy$, $zyz$, $zzx$, $zzy$ and $zzz$. Since there are $9$ of them, $\dim A_3=9$. There are nine overlaps of degree $4$: $xyyx$, $xyyy$, $xyyz$, $yyxx$, $yyxy$, $yyxz$, $yyyx$, $yyyy$ and $yyyz$. Resolving them, we find that (provided $a\notin\{0,1\}$), the degree $4$ part of the Gr\"obner basis of the ideal of relations of $A$ consists of the following $6$ elements:
$$
\begin{array}{ll}
\textstyle yzyx-ayzzz-\frac{1+a}{1-a}zzyz+\frac{2a-a^2+a^3}{1-a}zzzy,&zyzx-(1+a)zzyx+a^2zzzz,\\
\textstyle yzyy-\frac{a^2}{1-a}yzzx-\frac{1+a+a^2}{1-a}zzyx+\frac{2a+a^3}{1-a}zzzz,&zyzy-(1+a-a^2)zzyy+a^3zzzx,\\
yzyz-(1+a^2)yzzy-(a+a^2-a^3)zzyy+(a^2+a^4)zzzx,&\textstyle zyzz-\frac{1+a}{1-a}zzyz+\frac{a+a^3}{1-a}zzzy.
\end{array}
$$
It follows that the complete list of degree $4$ normal words is $yzzx$, $yzzy$, $yzzz$, $zzyx$, $zzyy$, $zzyz$,
$zzzx$, $zzzy$ and $zzzz$. Since there are $9$ of them, $\dim A_4=9$. It also follows that
\begin{equation}\label{AA5}
\text{$A_5$ is spanned by} \ \ z^3y^2,\ z^4x,\ z^3yz,\ z^4y,\ z^3yx,\ z^5,\ yzzyx,\ yzzyz,\ yzzyy,\ yz^4,\ yz^3y,\ yz^3x.
\end{equation}
The monomials listed in the above display are all degree $5$  monomials which do not contain a smaller degree subword being the leading monomial of a member of the Gr\"obner basis of the ideal of relations of $A$.

Instead of resolving all degree $5$ overlaps, which is tedious indeed, we just resolve enough of them to show that $A_5=\{0\}$. We start by resolving and reducing the overlaps $zyzxx$, $zyzyz$, $zyzxy$, $zyzyx$, $zyzxz$ and $zyzyy$, which provide (respectively) the following equalities in $A$:
\begin{equation}\label{gbup51}
\begin{array}{l}
(2-a-a^2)z^3y^2-(1-a-a^2-a^3)z^4x=0,\\ (3+3a-5a^2+a^3-2a^4)z^3y^2-(3a^3+a^5)z^4x=0, \\
(a^2+3)(z^3yz-az^4y)=0,\\ (1-2a-3a^2+3a^3-3a^4+a^5-a^6)z^3yz-(1-a-3a^2+4a^3-5a^4+a^5-a^6)z^4y=0, \\
(1-3a)z^3yx-(1-a-a^2-a^3)z^5,\\ (1+3a-3a^2+2a^3-a^4)z^3yx-(1-a+2a^2-a^3+a^4)z^5=0.
\end{array}
\end{equation}
The determinant of the $2\times 2$ matrix of the coefficients of the equations in the first two lines of the above display is $(1-a)(3+a^2)(1+a-3a^2-a^3-a^4)$. By (\ref{aneq}) it is non-zero and therefore the first line of the above display yields that $z^3y^2=z^4x=0$ in $A$. For the third and fourth lines, the determinant is $(1-a)^2(3+a^2)(1-2a^2+3a^3+a^5)$ and it does not vanish by (\ref{aneq}). Thus $z^3yz=az^4y=0$ in $A$. For last two lines, the determinant is $a(1-a)(3+a^2)(2-2a-a^3)$. Again, it does not vanish by (\ref{aneq}). Thus $z^3yx=z^5=0$ in $A$. Summarizing, we get
\begin{equation}\label{gbup51a}
z^3y^2=z^4x=z^3yz=z^4y=z^3yx=z^5=0\ \ \text{in $A$}.
\end{equation}
Now we resolve and reduce (using the degrees $\leq 4$ part of the Gr\"obner basis together with (\ref{gbup51a})) the overlaps $yzyzx$, $yzyxz$, $yzyxy$, $yzyyx$, $yzyzy$ and $yzyxx$, which provide (respectively) the following equalities in $A$:
\begin{equation}\label{gbup52}
\begin{array}{ll}
(1-a)yzzyx-ayz^4=0,&(1+a+a^3)yzzyx-2ayz^4=0, \\
(1+a^2)yzzyz-ayz^3y=0,&(1-a)yzzyz-(1-a+a^2)yz^3y=0, \\
(1-2a)yzzyy-a^2yz^3x=0,&(1+a^2)yzzyy-ayz^3x=0.
\end{array}
\end{equation}
The determinants of the matrices of the coefficients of the equations in the first and in the third lines of the above display equal to $a(1-3a-a^3)$ and therefore do not vanish according to (\ref{aneq}). The determinant arising from the second row is $1-2a+3a^2-a^3+a^4$ is also non-zero by (\ref{aneq}). Thus (\ref{gbup52}) can be rewritten as
\begin{equation}\label{gbup51b}
yzzyx=yzzyz=yzzyy=yz^4=yz^3y=yz^3x=0\ \ \text{in $A$}.
\end{equation}

Now by (\ref{AA5}), (\ref{gbup51a}) and (\ref{gbup51b}), $A_5=\{0\}$. Hence $H_A(t)=1+3t+6t^2+9t^3+9t^4$, which completes the proof.
\end{proof}

\begin{lemma}\label{gsk1} If $\K\neq \Z_2$, then there is $\alpha\in\K^6$ such that $H_A(t)=1+3t+6t^2+9t^3+9t^4$ and $H_{A^!}(t)=1+3t+3t^2$, where $A=\widehat{Q}^\xi$ with $\xi=(\alpha_1,\dots,\alpha_6,1,1,1)$.
\end{lemma}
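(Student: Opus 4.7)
The plan is to combine Lemmas~\ref{PHEW0} and~\ref{PHEW}, selecting whichever applies according to the characteristic and size of $\K$. Both constructions produce algebras of the form $\widehat{Q}^\xi$ satisfying $r_1=r_2=r_3=1$, so matching the shape required by the statement reduces to reading off the remaining six coefficients of the normalized relations as the tuple $\alpha=(\alpha_1,\ldots,\alpha_6)=(p_1,p_2,p_3,q_1,q_2,q_3)$.

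First I would handle the case $\mathrm{char}\,\K\in\{3,5\}$ by invoking Lemma~\ref{PHEW0} directly: its relations $xx+zy=0$, $xy+2yx+zz=0$, $xz+zx+yy=0$ are already in the form (\ref{skl-gen}) with $r_j=1$, giving $\alpha=(0,1,1,1,1,2)$. In the remaining case I would apply Lemma~\ref{PHEW} with some $a\in\K^*$ satisfying the nonvanishing condition (\ref{aneq}); rewriting its defining relations $xx=zy$, $xy=zz$, $xz=yy+azx$ as $xx-zy=0$, $-xy+zz=0$, $azx-xz+yy=0$ (making the square-variable coefficients equal to $+1$) yields $\alpha=(0,a,-1,-1,-1,0)$.

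The hard part will be showing that such $a$ exists in $\K$ whenever $\mathrm{char}\,\K\notin\{3,5\}$ and $\K\neq\Z_2$. Writing (\ref{aneq}) as $P(a)\neq 0$ for a fixed $P\in\Z[a]$ of degree $23$: if $|\K|>23$ then $P$ has at most $23$ roots in $\K$ and a non-root is automatic. For the remaining finite fields, namely $\K\in\{\mathbb{F}_4,\mathbb{F}_8,\mathbb{F}_{16},\mathbb{F}_7,\mathbb{F}_{11},\mathbb{F}_{13},\mathbb{F}_{17},\mathbb{F}_{19},\mathbb{F}_{23}\}$ (the fields of admissible characteristic with at most $23$ elements), I would exhibit an explicit $a$ and carry out the short verification: in $\mathbb{F}_4$ a primitive cube root of unity works; in $\mathbb{F}_8$ one may take a root of $t^3+t^2+1$; $\mathbb{F}_{16}$ contains $\mathbb{F}_4$ as a subfield and inherits the previous choice; while in $\mathbb{F}_p$ for $p\in\{7,11,13,17,19,23\}$ a brief search (for example $a=-1$ when $p=7$, $a=3$ when $p\in\{11,13,17\}$, and $a=2$ when $p\in\{19,23\}$) produces a suitable value. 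The hypothesis $\K\neq\Z_2$ is essential: over $\Z_2$ the only candidates $a=0,1$ are both excluded by the first two factors of $P$, and Lemma~\ref{PHEW0} is unavailable since $\mathrm{char}\,\Z_2=2\notin\{3,5\}$.
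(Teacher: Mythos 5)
Your argument is correct and follows essentially the same route as the paper: Lemma~\ref{PHEW0} for characteristic $3$ or $5$, and otherwise Lemma~\ref{PHEW} applied to some $a$ satisfying (\ref{aneq}), produced by a root-counting bound for the degree-$23$ polynomial together with explicit choices in the remaining small fields (all of which check out, including reading off $\alpha=(0,1,1,1,1,2)$ and $\alpha=(0,a,-1,-1,-1,0)$ after normalizing the relations to have $r_1=r_2=r_3=1$). The paper shortens the search by observing that $a=-1$ satisfies (\ref{aneq}) whenever ${\rm char}\,\K\notin\{2,3,5\}$, since the factor values at $a=-1$ are divisible only by $2$, $3$ and $5$, and by reducing (\ref{aneq}) modulo $2$ to a condition with at most $14$ exceptional values, so that only $\mathbb{F}_4$ and $\mathbb{F}_8$ need explicit elements; your additional cases ($\mathbb{F}_7$ through $\mathbb{F}_{23}$, and $\mathbb{F}_{16}$ via its subfield $\mathbb{F}_4$) are harmless but unnecessary.
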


\begin{proof} If ${\rm char}\,\K\in\{3,5\}$, the result follows from Lemma~\ref{PHEW0}. For the rest of the proof we assume that ${\rm char}\,\K\notin\{3,5\}$. By Lemma~\ref{PHEW}, it suffices to find $a\in\K$ for which (\ref{aneq}) is satisfied. If ${\rm char}\,\K\notin\{2,3,5\}$, then $a=-1$ satisfies (\ref{aneq}). Thus it remains to consider the case ${\rm char}\,\K=2$. In this case, one easily verifies that (\ref{aneq}) is equivalent to
\begin{equation}\label{aneqch2}
a\neq 0,\ a\neq 1,\ a^3+a+1\neq 0,\ a^5+a^3+1\neq 0\ \ \text{and}\ \ a^4+a^3+a^2+a+1\neq 0.
\end{equation}
The total number of $a$ failing (\ref{aneqch2}) can not  exceed $14$. Thus a required $a$ does exist provided $\K$ has more than 14 elements. This leaves us with two options to consider: $|\K|=4$ and $|\K|=8$. If $\K$ is the 4-element field, there is $a\in\K$ satisfying $a^2+a+1=0$. Such an $a$ also satisfies (\ref{aneqch2}). If $\K$ is the 8-element field, there is $a\in\K$ satisfying $a^3+a^2+1=0$. Again, such an $a$ satisfies (\ref{aneqch2}).
\end{proof}

\begin{lemma}\label{kz2} If $\K=\Z_2$, then $H_{\min}=1+3t+6t^2+9t^3+9t^4+5t^5+t^6$. Furthermore, there is $\alpha\in\K^6$  such that $H_A(t)=1+3t+6t^2+9t^3+9t^4+5t^5+t^6$ and $H_{A^!}(t)=1+3t+3t^2$, where $A=\widehat{Q}^\xi$ with $\xi=(\alpha_1,\dots,\alpha_6,1,1,1)$.
\end{lemma}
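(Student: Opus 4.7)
The plan is to prove both halves of Lemma~\ref{kz2} --- the value of $H_{\min}$ and the existence of a realizing $\alpha$ --- by combining an explicit Gr\"obner basis calculation with a finite-field lower bound argument.

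For the realization half, I would search among the $2^6=64$ candidate tuples $\alpha\in\Z_2^6$ for a specific choice such that the noncommutative Buchberger algorithm, applied to the ideal of relations of $A=\widehat{Q}^{(\alpha,1,1,1)}$ with respect to a degree-lexicographic ordering, yields a finite Gr\"obner basis whose normal-word counts are precisely $1,3,6,9,9,5,1$ in degrees $0,\dots,6$ and zero thereafter. A natural starting point is to take a coefficient pattern from Lemmas~\ref{PHEW0} or~\ref{PHEW} reduced mod $2$: over $\Z_2$ the arithmetic collapses significantly, so the algorithm should terminate in few steps and the normal words can be enumerated by hand. A parallel short computation on the dual, whose defining relations come from the orthogonal complement of the quadratic relation space of $A$, yields $H_{A^!}(t)=1+3t+3t^2$.

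For the lower bound half, note that by Remark~\ref{rem1} the Hilbert series of $\widehat{Q}^\xi$ does not change under base field extension. Hence for every $\xi\in\Z_2^9$, the coefficients of $H_{\widehat{Q}^\xi}$ are bounded below coefficient-wise by those of the generic Hilbert series over the algebraic closure $\overline{\Z_2}$, and the proof of Lemma~\ref{gsk1} (which produces the series $1+3t+6t^2+9t^3+9t^4$ already over $\mathbb{F}_4$ and $\mathbb{F}_8$) combined with Lemma~\ref{minhs} gives the lower bounds $\dim A_k\geq 1,3,6,9,9$ for $k=0,\dots,4$.

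The main obstacle, and the point distinguishing $\Z_2$ from larger fields, is establishing the sharper bounds $\dim A_5\geq 5$ and $\dim A_6\geq 1$ --- bounds which fail over larger fields, and which explain precisely why $\Z_2$ is exceptional. I would handle this by a finite enumeration. The symmetry group generated by the $S_3$ action on $\{x,y,z\}$ and the opposite-algebra involution swapping $p_i\leftrightarrow q_i$ (scaling is trivial over $\Z_2$) reduces the $2^9=512$ tuples $\xi\in\Z_2^9$ to a short list of equivalence classes. For each class one either identifies $\widehat{Q}^\xi$ with a member of one of the families ${\cal P}_1,\dots,{\cal P}_6$ specialized to $\Z_2$ or with a monomial algebra, or else runs a direct Gr\"obner basis computation in degrees up to $6$, and verifies that the tail of the Hilbert series is at least $5t^5+t^6$. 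The case analysis is tedious but finite and elementary; together with the existence half it pinpoints $H_{\min}(t)=1+3t+6t^2+9t^3+9t^4+5t^5+t^6$.
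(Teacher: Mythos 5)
Your proposal follows essentially the same route as the paper: for the realization half, an explicit tuple (the paper uses $\xi=(1,1,1,0,0,1,1,1,1)$) whose ideal of relations has a finite Gr\"obner basis with exactly $1,3,6,9,9,5,1$ normal words in degrees $0,\dots,6$, together with a short Gr\"obner computation for the dual; and for the minimality half, a symmetry-reduced finite case analysis over $\Z_2^9$ (the paper first disposes of the configurations where some product $p_jq_kr_l$ or $r_1r_2r_3$ vanishes, normalizes to $p_3=q_2=r_1=r_2=r_3=1$, and checks the remaining seven quadruples $(a,b,c,d)$ up to the $y\leftrightarrow z$ swap by direct computation). One caveat: your intermediate claim that Remark~\ref{rem1} and Lemma~\ref{minhs} yield the lower bounds $\dim A_k\geq 1,3,6,9,9$ for $k\leq 4$ is backwards as stated --- Lemma~\ref{gsk1} only shows these values are \emph{attained}, hence gives an upper bound on the minimal series, and the coefficientwise lower bound in degrees $3$ and $4$ requires the Golod--Shafarevich inequality (as invoked in the paper's proof of Theorem~\ref{copo22}) or the exhaustive check itself. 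Since your degree-$\leq 6$ Gr\"obner computations in the enumeration step would verify those degrees anyway, this slip is harmless, but the appeal to the generic series should either be dropped or replaced by the Golod--Shafarevich bound.
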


\begin{proof} First, let $\xi=(1,1,1,0,0,1,1,1,1)\in\Z_2^9$. Then the generalized Sklyanin algebra $A=\widehat{Q}^\xi$
is given be the relations $xx+yz$, $xy+zz$ and $xz+yy+zx$. A direct computation shows that these relations together with $yyx+yzz+zyz+zzy$, $yyz+yzz+zyz+zzx$, $yzx+zzz$, $yyyy+zyzz+zzyz$, $yzyx+zyzz$, $yzyy+yzzx+zzzz$, $yzyz+zzzx$, $yzzy+zzyy+zzzx$, $yzzz+zzzy$, $zyyy+zzyx$, $zyzzx$, $zzyzz$, $zzzyz$, $zzzzz$ and $zzzzyy$ form a Gr\"obner basis of the ideal of relations of $A$. The complete list of normal words is $x$, $y$, $z$, $yx$, $yy$, $yz$, $zx$, $zy$, $zz$,
$zyx$, $zzx$, $yyy$, $yzy$, $zyy$, $zzy$, $yzz$, $zyz$, $zzz$, $zzyx$, $yzzx$, $zzzx$, $zzyy$, $zyzy$, $zzzy$, $zzyz$,  $zyzz$, $zzzz$, $zzzyx$, $zzzzx$, $zzzyy$, $zzyzy$, $zzzzy$ and $zzzzyx$, which gives $H_A(t)=1+3t+6t^2+9t^3+9t^4+5t^5+t^6$.
The dual algebra $A^!$ is given by the relations $xx+yz$, $zy$, $xy+zz$, $yx$, $xz+zx$ and $yy+zx$. A direct computation shows that these relations together with $yzx$, $yzz$, $zzx$ and $zzz$ form a Gr\"obner basis of the ideal of relations of $A^!$.  The complete list of normal words is $x$, $y$, $z$, $yz$, $zx$ and $zz$, which gives $H_{A^!}(t)=1+3t+3t^2$.

Finally, we sketch the proof of the equality $H_{\min}=1+3t+6t^2+9t^3+9t^4+5t^5+t^6$ in the case $\K=\Z_2$. First, exactly as in the beginning of the proof of Theorem~\ref{COCOCO}, one shows that if $p_jq_kr_l=0$ for every $j,k,l$ satisfying $\{j,k,l\}=\{1,2,3\}$, then the Hilbert series of the corresponding generalized Sklyanin algebra is componentwise bigger than $1+3t+6t^2+9t^3+9t^4+5t^5+t^6$. Thus, up to a permutation of variables, we can assume that $p_3q_2r_1\neq 0$. In a similar manner, one checks that if $r_1r_2r_3=0$, then the Hilbert series of the corresponding generalized Sklyanin algebra is componentwise bigger than $1+3t+6t^2+9t^3+9t^4+5t^5+t^6$. Thus we can assume that $p_3=q_2=r_1=r_2=r_3=1$. Thus leads to $A$ defined by the relations $xx+ayz+bzy$, $xy+cyx+zz$, $xz+yy+dzx$ with $a,b,c,d\in\Z_2$. Again, in the case $a=b=0$, one easily checks that Hilbert series of $A$ is componentwise bigger than $1+3t+6t^2+9t^3+9t^4+5t^5+t^6$. Same goes for $a=b=c=d=1$ (in this case $A$ is a Sklyanin algebra). Now, swapping $y$ and $z$ corresponds to simultaneous swapping of $a$ and $b$ and of $c$ and $d$. Thus we are left with the following options for the quadruple $(a,b,c,d)$: $(1,1,0,0)$, $(1,1,0,1)$, $(1,1,1,1)$, $(1,0,0,0)$, $(1,0,0,1)$, $(1,0,1,0)$ and $(1,0,1,1)$. A direct computation shows that in all these cases $H_A$ is componentwise greater or equal to $1+3t+6t^2+9t^3+9t^4+5t^5+t^6$ and that the series $1+3t+6t^2+9t^3+9t^4+5t^5+t^6$ does occur (for instance, for $a=d=1$ and $b=c=0$). Thus $H_{\min}=1+3t+6t^2+9t^3+9t^4+5t^5+t^6$ in the case $\K=\Z_2$.
\end{proof}

\begin{proof}[Proof of Theorem~$\ref{copo22}$] The case $\K=\Z_2$ is covered by Lemma~\ref{kz2}. Assume now that $|\K|>2$. By Lemma~\ref{gsk1}, there is a generalized Sklyanin algebra $A$ satisfying $H_A(t)=1+3t+6t^2+9t^3+9t^4$ and $H_{A^!}=1+3t+3t^2$. Now let $B$ be an arbitrary generalized Sklyanin algebra. The Golod--Shafarevich theorem gives a lower estimate for the dimensions of the graded components of a quadratic algebra in terms of the numbers of generators and relations. In our case it yields $\dim B_2\geq 6$, $\dim B_3\geq 9$ and $\dim B_4\geq 9$. It immediately follows that $H_{\min}=1+3t+6t^2+9t^3+9t^4$, which completes the proof.
\end{proof}

\begin{proof}[Proof of Theorem~$\ref{copo02}$] For $\alpha\in \K^6$, let $\xi_\alpha=(\alpha_1,\dots,\alpha_6,1,1,1)\in\K^9$. Lemmas~\ref{kz2} and~\ref{gsk1} provide $\alpha\in\K^6$ such that the spaces $B_7$ and $B^!_3$ vanish, where $B=\widehat{Q}^{\xi_\alpha}$. By Lemma~\ref{minhs}, there is a non-empty Zarissky open subset $V$ of $\K^6$ such that $A_7=A^!_3=\{0\}$ for $A=\widehat{Q}^{\xi_\alpha}$ with $\alpha\in V$. Now let
$$
\textstyle U=\bigl\{\xi\in\K^9:\xi_1\xi_2\xi_3\neq 0,\ \bigl(\frac{\xi_4}{\xi_1},\frac{\xi_5}{\xi_
2},\frac{\xi_6}{\xi_3},\frac{\xi_7}{\xi_1},\frac{\xi_8}{\xi_2},\frac{\xi_9}{\xi_3}\bigr)\in V\bigr\}.
$$
Clearly, $U$ is non-empty and Zarissky open in $\K^9$ and $\{\widehat{Q}^{\xi_\alpha}:\alpha\in V\}=\{\widehat{Q}^{\xi}:\xi\in U\}$. Hence for $A=\widehat{Q}^{\xi}$ with $\xi\in U$ both $A$ and $A^!$ are finite
dimensional. This completes the proof of Theorem~$\ref{copo02}$.
\end{proof}

\vskip1truecm

{\bf Acknowledgements}

We are grateful to IHES and MPIM for hospitality, support, and excellent research atmosphere. We would like to thank M.Kontsevich and V.Sokolov for useful conversations. We appreciate very much referee's careful reading, which helped to improve the text. 
This work is funded by the ERC grant 320974, and partially supported by the project PUT9038.

\vskip1truecm
\normalsize

\rm

\normalsize
\vskip1truecm
\scshape

\noindent   Natalia Iyudu

\noindent School of Mathematics

\noindent  The University of Edinburgh

\noindent James Clerk Maxwell Building

\noindent The King's Buildings

\noindent Mayfield Road

\noindent Edinburgh

\noindent Scotland EH9 3JZ

\noindent E-mail address: \qquad {\tt niyudu@staffmail.ed.ac.uk}\ \ \

\vskip1truecm

\noindent    Stanislav Shkarin

\noindent Queens's University Belfast

\noindent Department of Pure Mathematics

\noindent University road, Belfast, BT7 1NN, UK

\noindent E-mail address:\qquad {\tt s.shkarin@qub.ac.uk}


\begin{thebibliography}{99}

\itemsep=-2pt

\bibitem{AS}M.~Artin and W.~Shelter, \it Graded algebras of global dimension $3$, \rm
Adv. in Math. \bf66\rm\ (1987), 171--216.

\bibitem{ATV1}M.~Artin, J.~Tate and M.~Van den Bergh, \it Modules over regular algebras of dimension 3 \rm Invent.Math. {\bf 106} (1991), 335--388.

\bibitem{ATV2}M.~Artin, J.~Tate and M.~Van den Bergh, \it Some algebras associated to automorphisms of elliptic curves. \rm The Grothendieck Festschrift, Vol. I, 33–-85, Progr. Math., {\bf 86}, Birkh\"auser Boston, Boston, MA, 1990.

\bibitem{BW}R.~Bocklandt, T.~Schedler and M.~Wemyss, \it Superpotentials and higher order derivations. \rm J. Pure Appl. Algebra {\bf 214} (2010), no. 9, 1501–-1522.

\bibitem{dr}V.~Drinfeld, \it On quadratic quasi-commutational relations in
quasi-classical limit, \rm Selecta Math. Sovietica {\bf 11} (1992), 317--326.

\bibitem{DV1}M.~Dubois-Violette, \it Graded algebras and multilinear forms. \rm C. R. Math. Acad. Sci. Paris 341 (2005), no. 12, 719-–724.

\bibitem{DV2}M.~Dubois-Violette, \it Multilinear forms and graded algebras. \rm J. Algebra 317(2007), no. 1, 198–-225.

\bibitem{Er}M.~Ershov, \it Golod–Shafarevich groups: A survey. \rm Int. J. Algebra Comput. {\bf 22}(2012), N5, 1–-68

\bibitem{G}V.~Ginzburg, \it Calabi-Yau algebras \rm ArXiV 0612139v3, 2007

\bibitem{gosh}E.~Golod and I.~Shafarevich, \it On the class field tower \rm
(Russian), Izv. Akad. Nauk SSSR Ser. Mat. \bf28\rm\ 1964, 261--272.

\bibitem{Sol}E.~Herscovich and A.~Solotar,  \it
Hochschild and cyclic homology of Yang-Mills algebras.
\rm J. Reine Angew. Math. {\rm 665} (2012), 73--156.

\bibitem{Ag1}T.~Lenagan and A.~Smoktunowicz, \it An infinite dimensional affine nil algebra with finite Gelfand-Kirillov dimension, \rm J. Amer. Math. Soc. {\bf 20} (2007), 989--1001.

\bibitem{Ko}M.~Kontsevich, \it Formal (non)commutative symplectic geometry. \rm The Gel'fand Mathematical Seminars, 1990–-1992, 173-–187, Birkhäuser Boston, Boston, MA, 1993.

\bibitem{ode}A.~Odesskii, \it Elliptic algebras, \rm  Russian Math. Surveys {\bf 57} (2002), 1127--1162

\bibitem{odf}A.~Odesskii and B.~Feigin, \it Sklyanin's elliptic algebras. (Russian) \rm Funktsional. Anal. i Prilozhen. {\bf 23} (1989), no. 3, 45--54; translation in Funct. Anal. Appl. {\bf 23} (1990), no. 3, 207-–214

\bibitem{popo}A.~Polishchuk and L.~Positselski, \it Quadratic
algebras, \rm University Lecture Series \bf37, \rm\ American
Mathematical Society, Providence, RI, 2005

\bibitem{S1}D.~Rogalski, S.~Sierra and T.~Stafford, \it Classifying orders in the Sklyanin algebra, \rm Algebra and Number Theory {\bf 9} (2015), 2055--2119

\bibitem{S2}D.~Rogalski, S.~Sierra and T.~Stafford, \it Noncommutative blowups of elliptic algebras, \rm Algebras and Representation Theory {\bf 18} (2015), 491--529

\bibitem{suri}V.~Sokolov, \it private communication, \rm IHES, January 2014.

\bibitem{skl}E.~Sklyanin, \it Some algebraic structures connected with the Yang-Baxterequation. Representations of a quantum algebra. (Russian)\rm Funktsional. Anal. i Prilozhen. {\bf 17} (1983), no. 4, 34–-48.

\bibitem{Ag2}A.~Smoktunowicz, \it A simple nil ring exists,\rm Communications in Algebra  {\it 30} (2002), no.1, 27--59.

\bibitem{Ag3}A.~Smoktunowicz, \it  Polynomial rings over nil rings need not be nil, \rm Journal of Algebra, {\bf 233} (2000), no.2, 427--436.

\bibitem{Ag4}A.~Smoktunowicz \it There are no graded domains with GK dimension strictly between 2 and 3, \rm Inventiones Mathematicae {\bf 164}, (2006), 635--640.

\bibitem{W}C.~Walton, \it Representation theory of three-dimensional Sklyanin algebras. \rm Nuclear Phys. B {\bf 860} (2012), no. 1, 167-–185.

\bibitem{Zelm}E.~Zelmanov, \it Some open problems in the theory of infinite dimensional algebras, \rm J. Korean Math. Soc. {\bf 44}(2007), N5, 1185-–1195.

\end{thebibliography}
\end{document}